\DeclareMathAlphabet{\mathpzc}{OT1}{pzc}{m}{it}
\newtheorem{theorem}{Theorem}[section]
\newtheorem*{theorem*}{Theorem}
\newtheorem{theorem-non}{Theorem}
\newtheorem{lemma-non}{Lemma}
\newtheorem{corollary-non}{Corollary}
\newtheorem*{lemma*}{Lemma}
\newtheorem{corollary}[theorem]{Corollary}
\newtheorem*{conjecture*}{Conjecture}
\theoremstyle{definition}
\newtheorem{definition}[theorem]{Definition}
\theoremstyle{remark}
\newtheorem{remark}[theorem]{Remark}
\newtheorem{claim}{Claim}
\newtheorem{question}[theorem]{Question}
\numberwithin{equation}{section}
\begin{document}
\title[From Complex contact structures to real almost contact 3-structures]{From Complex contact structures to real almost contact 3-structures}

\author{Eder M. Correa}

\thanks{Eder M. Correa was supported by PRPq-UFMG grant 27764*32}

\address{{UFMG, Avenida Ant\^{o}nio Carlos, 6627, 31270-901 Belo Horizonte - MG, Brazil}}
\address{E-mail: {\rm edermc@ufmg.br.}}

\begin{abstract} 
In this work, we prove that every complex contact structure gives rise to a distinguished type of almost contact metric 3-structure. As an application of our main result, we provide several new examples of manifolds which admit taut contact circles, taut and round almost cosymplectic 2-spheres, and almost hypercontact (metric) structures. These examples generalize, in a suitable sense, the well-known examples of contact circles defined by the Liouville-Cartan forms on the unit cotangent bundle of Riemann surfaces. Furthermore, we provide sufficient conditions for a compact complex contact manifold to be the twistor space of a positive quaternionic K\"{a}hler manifold. In the particular setting of Fano contact manifolds, from our main result, we also obtain new evidences supporting the LeBrun-Salamon conjecture.
\end{abstract}

\maketitle

\hypersetup{linkcolor=black}

\hypersetup{linkcolor=black}

\section{Introduction}

In 1959, Kobayashi \cite{Kobayashi} introduced the notion of complex contact manifolds. Among other results, he showed that every complex contact manifold $(Z, \mathscr{J},\theta)$ is base space of a ${\rm{U}}(1)$-principal bundle $Q$ endowed with a (distinguished) real contact structure. Unlike standard examples of real contact structures on principal circle bundles, such as the well-known Boothby-Wang fibrartions \cite{Boothby}, Kobayashi's real contact structure does not come from a ${\rm{U}}(1)$-principal connection. In fact, it is naturally obtained from the complex contact structure defined on the base space. On the other hand, since the ${\rm{U}}(1)$-principal bundle $Q$ in Kobayashi's construction is given by the unitary frame bundle of a holomorphic line bundle $L \in {\text{Pic}}(Z)$, its Euler class is of $(1,1)$-type, hence, from Hatakeyama's result \cite{HATAKEYAMA}, it follows that $Q$ also can be endowed with a real normal almost contact structure \cite{Sasaki}, \cite{SASAKIHARAKEYMAALMOST}. Based on these constructions, a natural question which arises is: 

\begin{question}
\label{Q1}
What is the relationship between Kobayashi's real contact structure and Hatakeyama's real almost contact structure? 
\end{question}
Motivated by the above question, and by the ideas introduced in \cite{IshiharaKonishi1}, we investigate the compatibility of such structures, and give an answer for Question \ref{Q1}. Our main result shows that Kobayashi's real contact structure and Hatakeyama's real almost contact structure are compatible in the sense of almost 3-contact geometry \cite{Kuo}. More precisely, we prove the following theorem:

\begin{theorem-non}
\label{T1}
Let $(Z, \mathscr{J},\theta)$ be a complex contact manifold of complex dimension $2n+1 \geq 3$. Then there exists a $\rm{U}(1)$-principal bundle $Q$ over $Z$ which admits an almost 3-contact metric structure $(g_{Q},\Phi_{\alpha},\xi_{\alpha},\eta_{\alpha})$, $\alpha = 1,2,3$, satisfying the following properties:
\begin{enumerate}
\item $(\Phi_{1},\xi_{1},\eta_{1})$ is a normal almost contact structure, such that $Z = Q/\mathscr{F}_{\xi_{1}}$, and $\mathscr{L}_{\xi_{1}}g_{Q} = 0$;
\item $\eta_{2}$ and $\eta_{3}$ are contact structures, such that $\eta_{2} \wedge (d\eta_{2})^{2n+1} = \eta_{3} \wedge (d\eta_{3})^{2n+1} \neq 0$;
\item $(g_{Q},\Phi_{\alpha},\xi_{\alpha},\eta_{\alpha})$ is a contact metric structure, for $\alpha = 2,3$.
\end{enumerate}
Moreover, both $Q$ and $(g_{Q},\Phi_{\alpha},\xi_{\alpha},\eta_{\alpha})$, $\alpha = 1,2,3$, can be constructed in a natural way from $Z$ and $\theta$. 
\end{theorem-non}

The result above shows that every complex contact structure gives rise to a distinguished type of almost 3-contact metric structure. It is worth pointing out that the almost contact metric 3-structure given in the above theorem can be obtained in a constructive way from $(Z, \mathscr{J},\theta)$. Let us briefly outline the mains steeps in the proof of Theorem \ref{T1}. Firstly, we observe that the structure tensors $(\Phi_{1},\xi_{1},\eta_{1})$ in Theorem \ref{T1} are given by Hatakeyama's almost contact structure, so we have $Z = Q/\mathscr{F}_{\xi_{1}}$, i.e., $\xi_{1}  \in \mathfrak{X}(Q)$ generates the ${\rm{U}}(1)$-action on $Q$, and the normality condition in item (1) is a consequence of \cite[Theorem 2]{HATAKEYAMA}. The contact form $\eta_{2}$ is given by Kobayashi's real contact structure, it is obtained from the real part of the holomorphic $1$-form $\pi^{\ast}\theta$, where\footnote{Here $L \in {\text{Pic}}(Z)$ satisfies $L^{\otimes(n+1)}= K_{Z}$, and $L^{\times} = L \backslash \{{\text{zero section}}\}$.} $\pi \colon L^{\times} \to Z$ denotes the bundle projection, see \cite{Kobayashi} for more details. From the complex almost\footnote{Every complex contact manifold admits a complex almost contact structure, see for instance \cite{IshiharaKonishi}.} contact structure induced by $\theta$ on $Z$, we show that one can construct suitable tensor fields $\Phi_{2} \in {\text{End}}(TQ)$, and $\xi_{2} \in \mathfrak{X}(Q)$, satisfying 
\begin{equation}
\Phi_{2} \circ \Phi_{2} = - {\rm{Id}} + \eta_{2} \otimes \xi_{2}, \ \ \ \ \eta_{2}(\xi_{2}) = 1,
\end{equation}
i.e., in such a way that $(\Phi_{2},\xi_{2},\eta_{2})$ defines another almost contact structure on $Q$. Then, we show that $(\Phi_{\alpha},\xi_{\alpha},\eta_{\alpha})$, $\alpha = 1, 2$, satisfy the following relations:
\begin{equation}
\Phi_{1}(\xi_{2}) = -\Phi_{2}(\xi_{1}), \ \ \eta_{1} \circ \Phi_{2} = - \eta_{2} \circ \Phi_{1}, \ \ \eta_{1}(\xi_{2}) = \eta_{2}(\xi_{1}) = 0,
\end{equation}
\begin{equation}
\Phi_{1} \circ \Phi_{2} - \eta_{2} \otimes \xi_{1} = - \Phi_{2} \circ \Phi_{1} + \eta_{1} \otimes \xi_{2}.
\end{equation}
From the relations above, by applying \cite[Theorem 1]{Kuo}, we obtain an almost contact 3-structure $(\Phi_{\alpha},\xi_{\alpha},\eta_{\alpha})$, $\alpha = 1, 2,3$, on $Q$. In order to verify item (2), we show that $\eta_{3}$ coincides with the imaginary part of the holomorphic 1-form $\pi^{\ast}\theta$, and, from a similar computation as in \cite{Kobayashi}, we show that $\eta_{3} \wedge (d\eta_{3})^{2n+1} = \eta_{2} \wedge (d\eta_{2})^{2n+1} \neq 0$. The Riemannian metric $g_{Q}$ in Theorem \ref{T1}, is defined by
\begin{equation}
g_{Q} := \pi^{\ast}_{Q}(g_{Z}) + \eta_{1} \otimes \eta_{1},
\end{equation}
where $g_{Z}$ is a suitable Hermitian metric on $Z$ which is associated\footnote{The existence of such an associated Hermitian metric on $(Z, \mathscr{J},\theta)$ was shown by Ishihara and Konishi in \cite{IshiharaKonishi}, see also \cite{Foreman}.} to the complex almost contact structure induced by $\theta$. From the above definition, we have that $g_{Q}$ is compatible, in the sense of almost contact metric geometry, with $(\Phi_{1},\xi_{1},\eta_{1})$, and $\mathscr{L}_{\xi_{1}}g_{Q} = 0$, cf. \cite[Theorem 1]{HATAKEYAMA}. Using the fact that the Hermitian metric $g_{Z}$ is associated to the underlying complex almost contact structure on the base space $Z$, we conclude the proof of the result stated in Theorem \ref{T1} by showing that $g_{Q}$ is compatible with $(\Phi_{\alpha},\xi_{\alpha},\eta_{\alpha})$, $\alpha = 2,3$, in the sense of contact metric geometry. 

\begin{remark}
In view of the ideas (briefly) described above, let us point out two important facts which show why the almost contact metric 3-structure provided by Theorem \ref{T1} is not unique:
\begin{itemize}
\item Although the contact structures $\eta_{\alpha}$, $\alpha = 2,3$, are uniquely determined by $\theta$, the $(1,1)$-tensor fields $\Phi_{\alpha}$, $\alpha = 2,3$, depend on the choice of a ${\rm{U}}(1)$-principal connection $ \sqrt{-1}\eta_{1}$ (horizontal lift) on $Q$, and this choice is not unique. Thus, the almost contact 3-structure $(\Phi_{\alpha},\xi_{\alpha},\eta_{\alpha})$, $\alpha = 1,2,3$, is not uniquely determined;

\item Besides the choice of a principal connection on $Q$, the Riemannian metric $g_{Q}$ also depends on the choice of an associated Hermitian metric $g_{Z}$ on the base space, and this choice also is not unique in general \cite{IshiharaKonishi}, \cite{Foreman}.
\end{itemize}
\end{remark}

\begin{remark}
It is worth pointing out that, according to the ideas introduced in \cite{IshiharaKonishi1}, the converse of Theorem \ref{T1} also seems to be true, i.e., an almost contact metric 3-structure which satisfies the properties (1)-(3) of Theorem \ref{T1} gives rise to a complex contact structure.
\end{remark}

As we shall see bellow, the result provide by Theorem \ref{T1} has some interesting consequences. The first one is the following result:

\begin{corollary-non}
\label{C1}
Under the hypotheses of Theorem \ref{T1}, for every $s = (a,b,c) \in S^{2}$, we have an almost contact metric structure $(g_{Q},\Phi_{s},\xi_{s},\eta_{s})$ on $Q$, such that 
\begin{equation}
\Phi_{s} = a\Phi_{1} + b\Phi_{2} + c\Phi_{3}, \ \ \ \xi_{s} = a\xi_{1} + b\xi_{2} + c\xi_{3}, \ \ \ \eta_{s} = a\eta_{1} + b\eta_{2} + c\eta_{3}.
\end{equation}
Moreover, by considering $\nu_{s} : = g_{Q}(\Phi_{s}\otimes{\rm{Id}})$, we have $\eta_{s} \wedge (\nu_{s})^{2n+1} = \eta_{s'} \wedge (\nu_{s'})^{2n+1} \neq 0$, for all $s,s' \in S^{2}$.
\end{corollary-non}
Contact circles and contact p-spheres are families of contact forms parameterized, respectively, by the circle and the p-sphere, theses concepts were introduced by H. Geiges and J. Gonzalo in \cite{GeigesGonzalo}, see also \cite{Zessin}. As it can be observed from Theorem \ref{T1}, and Corollary \ref{C1}, Kobayashi's contact structure can be realized as an element of the (taut\footnote{That is, the volume forms $\eta_{s} \wedge (d\eta_{s})^{2n+1}$ are equal for every $s \in S^{2} \cap \{a = 0\}$.}) contact circle
\begin{equation}
\big \{ \eta_{s}  \in \Omega^{1}(Q)\ \big| \ s \in S^{2} \cap \{a = 0\} \ \big \}.
\end{equation}
In the literature, the structure described in Corollary \ref{C1} is also known as {\textit{almost hypercontact (metric) structure}} \cite{Sasakigeometry}. Further, following \cite[Corollary 4.4]{Cappeletti}, we notice that the family $\{(\eta_{s},\nu_{s})\}_{s \in S^{2}}$ defines an {\textit{almost cosymplectic 2-sphere}} on $Q$ which is round\footnote{It means that $\eta_{\alpha}(\xi_{\beta}) + \eta_{\beta}(\xi_{\alpha})  = 0$, $\forall \alpha,\beta \in \{1,2,3\}$, $\alpha \neq \beta$, and $\iota_{\xi_{\alpha}}\nu_{\beta} + \iota_{\xi_{\beta}}\nu_{\alpha} = 0$, $\forall \alpha,\beta \in \{1,2,3\}$.} and taut. 
\begin{remark}[Unit cotangent bundles]
\label{unitcotangent}
In the above setting, one obtains a huge class of new examples of taut contact circles and almost cosympletic 2-spheres by means of the following well-know construction: Let $M$ be any complex manifold, such that $\dim_{\mathbbm{C}}(M) = n+1$. Consider the tautological holomorphic 1-form $\Lambda$ defined on its cotangent bundle $T^{\ast}M$, i.e.,
\begin{equation}
\label{hsymplectic}
\Lambda(X) := \gamma(p_{\ast}(X)), \ \ \ \ X \in T_{\gamma}(T^{\ast}M),
\end{equation}
where $p \colon T^{\ast}M \to M$ is the natural projection. From this, we have a complex contact structure $\theta$ on the projective cotangent bundle $Z = {\text{P}}(T^{\ast}M)$, such that $\pi^{\ast}\theta =\Lambda|_{(T^{\ast}M)^{\times} }$, where $\pi \colon (T^{\ast}M)^{\times} \to {\text{P}}(T^{\ast}M)$ is the projection map\footnote{Notice that, in this particular case, we have $L^{\times} = \mathscr{O}_{{\text{P}}(T^{\ast}M)}(-1)^{\times} \cong (T^{\ast}M)^{\times}$, see for instance \cite{Kobayashibundles}, \cite{Kobayashi}.}. In this particular case, fixed any Hermitian metric on $T^{\ast}M$, we obtain from Theorem \ref{T1} that there exists an almost contact metric $3$-structure $(g_{Q},\Phi_{\alpha},\xi_{\alpha},\eta_{\alpha})$, $\alpha = 1,2,3$, on the unit cotangent bundle $Q = {\text{S}}^{1}(T^{\ast}M)$, which is completely determined by $({\text{P}}(T^{\ast}M), \theta)$. By applying Corollary \ref{C1}, we get several examples of taut contact circles, taut and round almost cosymplectic 2-spheres, and almost hypercontact (metric) structures. It is worth observing that this last construction generalizes, in a suitable sense, the well-know example of contact circles defined by the Liouville-Cartan forms on the unit cotangent bundle ${\text{S}}^{1}(T^{\ast}\Sigma)$ of Riemann surface $\Sigma$, cf. \cite{GeigesGonzalo}, \cite[Section 2.1]{Albers}. 
\begin{remark}
We also observe that, if $Z$ is a projective contact manifold, such that $b_{2}(Z) \geq 2$, then $Z = {\text{P}}(T^{\ast}M)$, for some projective manifold $M$, see for instance \cite[Corollary 4]{Demailly}. Thus, in view of Remark \ref{unitcotangent}, from Theorem \ref{T1} we have that the relationship between projective contact manifolds and almost 3-contact metric manifolds goes beyond the well-known interplay between twistor spaces of positive quaternionic K\"{a}hler manifolds and 3-Sasakian manifolds, see for instance \cite{Ishihara}, \cite{Konishi}, \cite{IshiharaKonishi1}, \cite{3twistor}, \cite{Tanno}, \cite{Jelonek}. 
\end{remark}
\end{remark}
By using the almost contact metric 3-structure obtained from Theorem \ref{T1}, one also obtains sufficient conditions for a complex contact manifold to be K\"{a}hler-Einstein. Actually, we have the following result:
\begin{corollary-non}
\label{Corollary2}
In the setting of Theorem \ref{T1}, $(Z, \mathscr{J},\theta)$ admits a K\"{a}hler-Einstein metric with positive scalar curvature if at least one of the following (equivalent) two conditions holds:
\begin{enumerate}
\item $\Phi_{1} = \nabla \xi_{1}$, where $\nabla$ is the Levi-Civita connection of $g_{Q}$;
\item $\big [ \Phi_{\alpha},\Phi_{\alpha} \big ] + 2d\eta_{\alpha} \otimes \xi_{\alpha} = 0$, for $\alpha = 2$ or $\alpha = 3$.
\end{enumerate}
In particular, if $Z$ is compact, and $(1)$ or $(2)$ holds, then $(Z, \mathscr{J},\theta)$ is the twistor spaces of a compact positive quaternionic K\"{a}hler manifold.
\end{corollary-non}

The key point to obtain the result above is observing that, if at least one of the conditions in the above corollary holds, then the almost contact metric 3-structure $(g_{Q},\Phi_{\alpha},\xi_{\alpha},\eta_{\alpha})$, $\alpha = 1,2,3$, obtained from Theorem \ref{T1}, is in fact 3-Sasakian. In the particular setting of Fano contact manifolds, the problem related to the existence of K\"{a}hler-Einstein metrics is an open question which has important implications in the classification of Fano contact manifolds \cite{LEBRUNFANO}, \cite{BEAUVILLE}, \cite{Fortourists}. For this particular class of complex contact manifolds, we have that Theorem \ref{T1} takes the following form:
\begin{corollary-non}
\label{3contactfano}
Let $(Z, \mathscr{J},\theta)$ be a Fano contact manifold of complex dimension $2n+1 \geq 3$. Then there exists a $\rm{U}(1)$-principal bundle $Q$ over $Z$ which admits an almost contact metric 3-structure $(g_{Q},\Phi_{\alpha},\xi_{\alpha},\eta_{\alpha})$, $\alpha = 1,2,3$, satisfying the following properties:
\begin{enumerate}
\item $(\Phi_{1},\xi_{1},\eta_{1})$ is a normal almost contact structure, such that $Z = Q/\mathscr{F}_{\xi_{1}}$, and $\mathscr{L}_{\xi_{1}}g_{Q} = 0$;
\item $(\eta_{1},\eta_{2},\eta_{3})$ is a triple of contact structures, such that $\eta_{2} \wedge (d\eta_{2})^{2n+1} = \eta_{3} \wedge (d\eta_{3})^{2n+1} \neq 0$;
\item $(g_{Q},\Phi_{\alpha},\xi_{\alpha},\eta_{\alpha})$ is a contact metric structure, for $\alpha = 2,3$.
\end{enumerate}
Moreover, both $Q$ and $(\Phi_{\alpha},\xi_{\alpha},\eta_{\alpha})$, $\alpha = 1,2,3$, can be constructed in a natural way from $Z$ and $\theta$. 
\end{corollary-non}

In the above setting, we have that $(Q,\eta_{1})$ is the Boothby-Wang fibration defined by the Euler class ${\rm{e}}(Q) \in H^{2}(Z,\mathbbm{Z})$. We also observe that, under the assumption that $(Z, \mathscr{J},\theta)$ is Fano contact with $b_{2}(Z) > 1$, it follows that $Z = {\text{P}}(T^{\ast}\mathbbm{C}P^{n+1})$, see for instance \cite{LeBrunSalamon}, i.e., $Z$ is the twistor space of a homogeneous quaternionic K\"{a}hler manifold with positive scalar curvature (Wolf space \cite{Wolf}). Hence, $Z$ is also homogeneous \cite{Boothby2}. For the case that $b_{2}(Z) = 1$, we have the following conjecture:

\begin{conjecture*}[LeBrun-Salamon, \cite{LeBrunSalamon}, \cite{LEBRUNFANO} ]
\label{LSconhecture}
Let $Z$ be a Fano contact manifold with $b_{1}(Z) = 1$, then $Z$ must be homogeneous.
\end{conjecture*}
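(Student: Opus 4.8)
The plan is to reduce the conjecture, by means of Theorem~\ref{T1} and Corollary~\ref{Corollary2}, to the K\"ahler--Einstein existence problem for Fano contact manifolds, and then to appeal to the structure theory of positive quaternionic K\"ahler manifolds. So let $(Z,\mathscr{J},\theta)$ be a Fano contact manifold with $b_{2}(Z) = 1$. By Corollary~\ref{3contactfano} we obtain the Boothby--Wang circle bundle $Q$ over $Z$ together with the canonically associated almost contact metric $3$-structure $(g_{Q},\Phi_{\alpha},\xi_{\alpha},\eta_{\alpha})$, $\alpha = 1,2,3$. The decisive first step would be to show that this $3$-structure is in fact $3$-Sasakian; equivalently, by Corollary~\ref{Corollary2}, to verify that $\Phi_{1} = \nabla\xi_{1}$ for the Levi--Civita connection $\nabla$ of $g_{Q}$, or that the torsion tensor $\big[\Phi_{\alpha},\Phi_{\alpha}\big] + 2\,d\eta_{\alpha}\otimes\xi_{\alpha}$ vanishes for $\alpha = 2$ or $\alpha = 3$. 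Once this is done, Corollary~\ref{Corollary2} yields that $(Z,\mathscr{J},\theta)$ admits a K\"ahler--Einstein metric of positive scalar curvature, so $Z$ is the twistor space of a compact positive quaternionic K\"ahler manifold $M$; since $b_{2}(Z) = 1$ and the fibration contributes the class of the fibre $\mathbbm{C}P^{1}$, one has $b_{2}(M) = 0$.

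The second step would then be to invoke the finiteness and rigidity results for positive quaternionic K\"ahler manifolds: a compact positive quaternionic K\"ahler manifold with $b_{2} = 0$ should be a Wolf space, hence a compact Riemannian symmetric space, and its twistor space a rational homogeneous variety; in particular $Z$ would be homogeneous. Together with the already settled case $b_{2}(Z) > 1$ --- where $Z = {\text{P}}(T^{\ast}\mathbbm{C}P^{n+1})$ by \cite{LeBrunSalamon} and is homogeneous by \cite{Boothby2} --- this would complete the argument.

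The genuine obstacle is the first step, and it is precisely here that the conjecture remains open. There is no reason a priori for the $3$-structure produced by Theorem~\ref{T1} to be normal for an \emph{arbitrary} Fano contact manifold, and establishing the vanishing of the relevant torsion appears to be equivalent to solving the K\"ahler--Einstein problem in this setting, i.e.\ to the difficult analytic question studied in \cite{LEBRUNFANO}. A reasonable line of attack would be to produce the K\"ahler--Einstein metric directly --- e.g.\ through a continuity-method or complex Monge--Amp\`ere argument, exploiting the contact moment map and the $S^{1}$-symmetry generated by $\xi_{1}$ to control the associated energy functionals --- and then to read off the $3$-Sasakian condition from the uniqueness of the Riemannian cone over $Q$. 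Carrying this through is exactly the missing ingredient; absent it, Corollary~\ref{3contactfano} and Corollary~\ref{Corollary2} furnish new \emph{sufficient} conditions for, and hence new evidence supporting, the conjecture rather than a complete proof.
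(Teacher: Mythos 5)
This statement is not a theorem of the paper at all: it is the LeBrun--Salamon conjecture, stated as an open conjecture, and the paper offers no proof of it --- only the observation that its main construction (the almost contact metric $3$-structure on $Q$, which becomes $3$-Sasakian exactly when $Z$ is K\"ahler--Einstein) can be read as evidence in its favor. Your proposal is candid that it is not a proof, and your reduction of homogeneity to the K\"ahler--Einstein problem via the paper's main theorem and its corollaries is in line with how the paper itself frames the situation. So there is nothing to compare at the level of ``the paper's proof''; what remains is to flag where your outline would still fail even as a program.

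There are two genuine gaps, not one. First, as you say, verifying $\Phi_{1}=\nabla\xi_{1}$ (equivalently the normality of $(\Phi_{\alpha},\xi_{\alpha},\eta_{\alpha})$ for $\alpha=2$ or $3$) for an arbitrary Fano contact manifold is exactly the open K\"ahler--Einstein existence problem of \cite{LEBRUNFANO}; the paper's corollaries give sufficient conditions, not a mechanism for producing them. Second, and this you understate: even granting the K\"ahler--Einstein metric, your ``second step'' is not a consequence of known finiteness and rigidity results. The known rigidity theorem of \cite{LeBrunSalamon} handles $b_{2}>0$ (forcing the complex Grassmannian, respectively $Z={\text{P}}(T^{\ast}\mathbbm{C}P^{n+1})$); the assertion that a compact positive quaternionic K\"ahler manifold with $b_{2}=0$ is a Wolf space is precisely the quaternionic K\"ahler form of the LeBrun--Salamon conjecture itself, verified only in low dimensions (cf. \cite{Algtorus} and the evidence in \cite{BEAUVILLE}). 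So your argument, completed as written, would be circular: it reduces the contact-geometric statement to its own quaternionic K\"ahler avatar. Finally, a small point of hygiene: the conjecture as printed says $b_{1}(Z)=1$, which must be read as $b_{2}(Z)=1$ (a Fano manifold is simply connected, so $b_{1}=0$); you silently made this correction, and it is worth stating explicitly.
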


The conjecture above has been verified for low dimensional cases $n \leq 4$, see for instance \cite{Algtorus} and references therein. However, it is still an open problem in its full generality. It is worth to observe that, under the assumption that $(Z, \mathscr{J},\theta)$ is homogeneous, the manifold $Q$ in Corollary \ref{3contactfano} can be endowed with a 3-Sasakian structure. In fact, in this case $Q$ is the Konishi bundle \cite{Konishi} associated to a Wolf space \cite{Boothby2}, \cite{Wolf}. Therefore, in view of the above conjecture, it is expected that the $\rm{U}(1)$-principal bundle $Q$ in Corollary \ref{3contactfano} is in fact a 3-Sasakian manifold, for every Fano contact manifold $(Z, \mathscr{J},\theta)$. In this way, the result provided by our last result can be realized as an evidence\footnote{The most stronger evidence thus far of the validity of Conjecture \ref{LSconhecture} is the result provided in \cite{BEAUVILLE}.} in favor of the LeBrun-Salamon conjecture.  

Based on the interplay between 3-Sasakian manifolds and hyperk\"{a}hler manifolds \cite{Sasakigeometry}, the result of our main theorem can be translated to the language of almost hyperhermitian geometry in the following way:
\begin{corollary-non}
\label{C2}
Let $(Z, \mathscr{J},\theta)$ be a complex contact manifold of complex dimension $2n+1 \geq 3$. Then there exists a $\mathbbm{C}^{\times}$-principal bundle $\mathscr{U}(Z)$ over $Z$ such that $\mathscr{U}(Z)$ admits an almost hyperhermitian structure $(g_{\mathscr{U}},\mathbbm{I}_{1},\mathbbm{I}_{2},\mathbbm{I}_{3})$, satisfying:

\begin{enumerate}
\item $(g_{\mathscr{U}},\mathbbm{I}_{1})$ is a Hermitian structure, i.e., $[ \mathbbm{I}_{1},\mathbbm{I}_{1}] = 0$;

\item $\omega_{\alpha} = g_{\mathscr{U}}(\mathbbm{I}_{\alpha} \otimes {\rm{Id}})$, $\alpha = 2,3$, are symplectic structures;

\item $\Upsilon := \omega_{2} + \sqrt{-1}\omega_{3}$ is a holomprphic symplectic structure on $(\mathscr{U}(Z),\mathbbm{I}_{1})$.
\end{enumerate}
Furthermore, both $\mathscr{U}(Z)$ and $(g_{\mathscr{U}},\mathbbm{I}_{1},\mathbbm{I}_{2},\mathbbm{I}_{3})$ can be constructed in a natural way from $(Z, \mathscr{J},\theta)$. 
\end{corollary-non}

In the result above, we have $\mathscr{U}(Z) = {\text{Tot}}(L^{\times}) \cong Q \times \mathbbm{R}$, and the almost hyperhermitian structure $(g_{\mathscr{U}},\mathbbm{I}_{1},\mathbbm{I}_{2},\mathbbm{I}_{3})$ is obtained naturally from the almost contact metric $3$-structure provided by Theorem \ref{T1}. Further, by construction, we have $\Upsilon = d(\pi^{\ast}\theta)$, where $\pi \colon \mathscr{U}(Z) \to Z$ denotes the bundle projection\footnote{See for instance \cite[Lemma 1.2]{BEAUVILLE}.}. We notice that the previous corollaries also have an interpretation in terms of the almost hyperhermitian structure given in Corollary \ref{C2}. In fact, from the result above, for every $s = (a,b,c) \in S^{2}$, we have an almost Hermitian structure  $(g_{\mathscr{U}}, \mathbbm{I}_{s})$ on $\mathscr{U}(Z)$, such that 
\begin{equation}
 \mathbbm{I}_{s}(X) : = \Phi_{s}(X) - \eta_{s}(X)\frac{d}{dt}, \ \ \ \ \mathbbm{I}_{s}\Big ( \frac{d}{dt} \Big) := \xi_{s}, 
\end{equation}
such that $X \in \mathfrak{X}(Q)$, where $(\Phi_{s},\xi_{s},\eta_{s})$ is defined as in Corollary \ref{C1}. The sufficient conditions (1), and (2), given in Corollary \ref{Corollary2}, for $Z$ to be K\"{a}hler-Einstein, can be rephrased in terms of $(g_{\mathscr{U}},\mathbbm{I}_{1},\mathbbm{I}_{2},\mathbbm{I}_{3})$ as follows:
\begin{enumerate}
\item $d\omega_{1} = 0$, where $\omega_{1} = g_{\mathscr{U}}(\mathbbm{I}_{1} \otimes {\rm{Id}})$;
\item $[ \mathbbm{I}_{\alpha},\mathbbm{I}_{\alpha}] = 0$, for $\alpha = 2$ or $\alpha = 3$.
\end{enumerate}
If at least one of the above (equivalent) conditions is satisfied, then we have that $(g_{\mathscr{U}},\mathbbm{I}_{1},\mathbbm{I}_{2},\mathbbm{I}_{3})$ defines a hyperk\"{a}hler structure on $\mathscr{U}(Z)$, which in turn implies that $(Z, \mathscr{J},\theta)$ admits a K\"{a}hler-Einstein metric \cite{3Einstein}. In the particular case that $(Z, \mathscr{J},\theta)$ is homogeneous, we have that the manifold $\mathscr{U}(Z)$ in Corollary \ref{C2} can be endowed with a hyperk\"{a}hler structure. Actually, in this last case $\mathscr{U}(Z)$ is the Swann bundle \cite{Swann} over a Wolf space. As we can see, in view of Conjecture\ref{LSconhecture}, it is expected that the $\mathbbm{C}^{\times}$-principal bundle $\mathscr{U}(Z)$ is in fact a hyerk\"{a}hler manifold. Hence, under the hypotheses of Corollary \ref{3contactfano}, our last result also can be realized as an evidence supporting the LeBrun-Salamon conjecture. 
\begin{remark}[Cotangent bundles]
 Given a complex manifold $M$, in some particular cases, it can be shown that $T^{\ast}M$ admits a hyperk\"{a}hler metric which is compatible with the canonical holomorphic symplectic form $d\Lambda$ defined on $T^{\ast}M$ (Eq. \ref{hsymplectic}), see for instance \cite{CALABIANSATZ}, \cite{Nakajima}, \cite{Biquard}, \cite{Biquard1}, \cite{Kronheimer2}, and references therein. Also, in the case that $M$ is a real-analytic K\"{a}hler manifold, it was shown independently by D. Kaledin \cite{Kaledin}, and by B. Feix \cite{Feix}, that there exists a hyperk\"{a}hler metric in a neighbourhood of the zero section of $T^{\ast}M$ which is compatible with $d\Lambda$. In a broad sense, if  $Z = {\text{P}}(T^{\ast}M)$, for some complex manifold $M$, by applying Corollary \ref{C2}, we obtain an almost hyperhermitian structure $(g_{\mathscr{U}},\mathbbm{I}_{1},\mathbbm{I}_{2},\mathbbm{I}_{3})$ on the manifold $\mathscr{U}(Z) = (T^{\ast}M)^{\times}$, see Remark \ref{unitcotangent}. Moreover, in this case, the canonical complex structure on $(T^{\ast}M)^{\times}$ coincides with $\mathbbm{I}_{1}$, and the holomorphic symplectic form $\Upsilon$, given in item (3) of Corollary \ref{C2}, turns out to be the restriction to $(T^{\ast}M)^{\times}$ of the canonical holomorphic symplectic form $d\Lambda$ of $T^{\ast}M$. Therefore, for any complex manifold $M$, our last result shows that, at least outside of the zero section of $T^{\ast}M$, one can always find an almost hyperhermitian structure $(g_{\mathscr{U}},\mathbbm{I}_{1},\mathbbm{I}_{2},\mathbbm{I}_{3})$ compatible with the restriction of the canonical holomorphic symplectic structure of $T^{\ast}M$.
\end{remark}

\subsection*{Organization of the paper} This paper is organized as follows: In Section \ref{Sec2}, we provide a brief review about some generalities on complex contact manifolds, focusing on Kobayashi's construction of real contact structures and on its relationship with complex almost contact (metric) structures. In Section \ref{Sec3}, we review some basic generalities on almost contact manifolds, contact metric structures, and almost contact 3-structures. In Section \ref{Sec4}, we provide a complete proof for Theorem \ref{T1} and its Corollaries 1-4.

\section{Generalities on complex Contact manifolds} 
\label{Sec2}

In this section, we provide an overview on complex contact geometry. Our main purpose is to investigate Kobayashi's construction of real contact structures \cite{Kobayashi} from the view point of complex almost contact geometry \cite{IshiharaKonishi2}, \cite{IshiharaKonishi}.

\subsection{Complex contact manifolds} A {\textit{complex contact manifold}} is a complex manifold $(Z,\mathscr{J})$ with odd complex dimension $2n+1 \geq 3$ together with an open covering $\mathscr{U} = \{U_{i}\}_{i \in I}$ of coordinate neighborhoods such that:
\begin{enumerate}
\item On each open set $U_{i}$ we have a holomorphic 1-form $\theta_{i}$ such that
\begin{equation}
\label{nvanishing}
\theta_{i} \wedge \big (d\theta_{i} \big)^{n} \neq 0. 
\end{equation}
\item On $U_{i} \cap U_{j} \neq \emptyset$ there is a nonvanishing holomorphic function $f_{ij} \colon U_{i} \cap U_{j} \to \mathbbm{C}^{\times}$ such that 
\begin{center}
$\theta_{i} = f_{ij} \theta_{j}.$
\end{center}
\end{enumerate}
Given a complex contact manifold $(Z, \mathscr{J},\{\theta_{i}\})$, if $U_{i} \cap U_{j} \neq \emptyset$, since $\ker(\theta_{i})|_{U_{i} \cap U_{j}} = \ker(\theta_{j})|_{U_{i} \cap U_{j}}$, by defining
\begin{equation}
\mathscr{H}^{1,0} := \Big (\bigcup_{i \in I}\ker(\theta_{i}) \Big ) \bigcap T^{1,0}Z,
\end{equation}
it follows that $\mathscr{H}^{1,0}$ is a well-defined holomorphic subbundle of $T^{1,0}Z$ of maximal rank and complex dimension $2n$. This holomorphic subbundle is called {\textit{holomorphic contact subbundle}}. Considering the identification of holomorphic vector bundles $T^{1,0}Z  \cong (TZ,\mathscr{J})$, we shall denote by $\mathscr{H} \subset TZ$ the holomorphic subbundle corresponding to the contact subbundle $\mathscr{H}^{1,0}$.

From the line bundle $E = \{f_{ij}\} \in H^{1}(Z,\mathscr{O}_{Z}^{\ast})$, and the relation $\theta_{i} = f_{ij} \theta_{j}$ on the overlaps $U_{i} \cap U_{j} \neq \emptyset$, we can define a holomorphic vector bundle epimorphism $\theta \colon TZ \to E$, such that 
\begin{center}
$\theta|_{U_{i}} := \theta_{i} \otimes s_{i}, \ \ \ (\forall i \in I)$
\end{center}
where $s_{i} \in H^{0}(U_{i},E)$ is some nonvanishing holomorphic local section. From this, we obtain an exact sequence of holomorphic vector bundles
\begin{equation}
\label{exact}
0 \longrightarrow \ker(\theta) \longrightarrow  TZ \longrightarrow  E \longrightarrow  0,
\end{equation}
such that $\mathscr{H} = \ker(\theta)$. Moreover, from the condition $\theta_{i} = f_{ij} \theta_{j}$, we obtain
\begin{center}
$\theta_{i} \wedge \big (d\theta_{i} \big)^{n} = f_{ij}^{n+1}\theta_{j} \wedge \big (d\theta_{j} \big)^{n}$, 
\end{center}
on $U_{i} \cap U_{j} \neq \emptyset$, so we have a morphism $\theta \wedge (d\theta)^{n} \colon \det(TZ) \to E^{\otimes(n+1)}$. Since $\theta_{i} \wedge \big (d\theta_{i} \big)^{n}$ is a holomorphic $(2n+1)$-form which does not vanish in $U_{i}$, it follows that $\theta \wedge (d\theta)^{n}$ defines a isomorphism between $\det(TZ)$ and $E^{\otimes(n+1)}$. Hence, from the exact sequence \ref{exact}, we have
\begin{equation}
\label{cotactlinebundle}
E^{\otimes (n+1)} \cong \det(TZ) = \det(\mathscr{H}) \otimes E,
\end{equation}
so we obtain $\det(\mathscr{H}) \cong E^{\otimes n}$, and $K_{Z} = \{f_{ij}^{-(n+1)}\} \in H^{1}(Z,\mathscr{O}_{Z}^{\ast})$. The holomorphic line bundle $E$ is called {\textit{contact line bundle}}. 

\begin{remark}
In the construction above we have that $\theta  \in H^{0}(Z,  \Omega_{Z}^{1}\otimes E)$ defines completely the contact structure given by the local data $\{\theta_{i}\}$. Thus, we can also refer to a complex contact manifold as being a $(2n+1)$-dimensional complex manifold $(Z,\mathscr{J})$ with a twisted $1$-form $\theta  \in H^{0}(Z,  \Omega_{Z}^{1}\otimes E)$, such that $\theta \wedge (d\theta)^{n} \in H^{0}(Z, K_{Z} \otimes E^{\otimes(n+1)})$ does not vanish anywhere. 
\end{remark}

\begin{definition}
A Fano manifold is compact complex manifold $Z$ such that $c_{1}(Z)$ can be represented by a positive $(1,1)$-form.
\end{definition}

A Fano manifold with a complex contact structure is called {\textit{Fano contact manifold}}. This class of complex contact manifolds plays an important role in the study of positive quaternionic K\"{a}hler geometry. In fact, in \cite{SALAMONQUATERNIONIC}, is was shown that the twistor space of a compact positive quaternionic K\"{a}hler manifold is a complex contact manifold which admits a K\"{a}hler-Einstein metric with positive scalar curvature. Moreover, the converse is also true:

\begin{theorem}[ LeBrun, \cite{LEBRUNFANO}]
\label{LeBrun}
Let $Z$ be a Fano contact manidold. Then $Z$ is a twistor space iff it admits a K\"{a}hler-Einstein metric.
\end{theorem}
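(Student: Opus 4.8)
The plan is to prove the two implications separately; the ``only if'' direction is a citation and the ``if'' direction carries all the content. For the forward direction, I would invoke Salamon's construction \cite{SALAMONQUATERNIONIC}: when $Z$ is the twistor space of a compact positive quaternionic K\"{a}hler manifold $M$, the natural twistor metric --- assembled from the Levi-Civita connection of $M$ and the round metric on the $\mathbbm{C}P^{1}$-fibres --- is already K\"{a}hler-Einstein with positive scalar curvature, as recorded just above the statement, so $Z$ admits a K\"{a}hler-Einstein metric and nothing further is required.

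For the converse, the strategy is to promote a K\"{a}hler-Einstein metric on $(Z,\mathscr{J},\theta)$ to a hyperk\"{a}hler metric on the $\mathbbm{C}^{\times}$-bundle $\mathscr{U}(Z) = \mathrm{Tot}(L^{\times})$, where $L \in \mathrm{Pic}(Z)$ satisfies $L^{\otimes(n+1)} = K_{Z}$ (equivalently $L = E^{-1}$ for the contact line bundle $E$), and then to recognize $\mathscr{U}(Z)$ as a Swann bundle. First I would record the two pieces of structure $\mathscr{U}(Z)$ carries intrinsically. On one hand, the $E$-twisted holomorphic $1$-form $\theta$ pulls back, through the tautological pairing on $L^{\times}$, to a genuine holomorphic $1$-form $\pi^{\ast}\theta$ whose exterior derivative $\Upsilon := d(\pi^{\ast}\theta)$ is a holomorphic symplectic form on $(\mathscr{U}(Z),\mathbbm{I}_{1})$; this is Kobayashi's observation, cf. \cite[Lemma 1.2]{BEAUVILLE}, and it is item (3) of Corollary \ref{C2}. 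On the other hand, a K\"{a}hler-Einstein metric $g$ on $Z$ --- which, $Z$ being Fano, satisfies $\mathrm{Ric}(g) = \lambda g$ with $\lambda > 0$ --- induces via its Chern connection a Hermitian metric on $L$ whose curvature is a negative constant multiple of the K\"{a}hler form $\omega_{g}$ (negative, as it must be since $L$ is negative on the Fano manifold $Z$), and hence a Riemannian metric $g_{\mathscr{U}} = \pi^{\ast}g + (\text{fibrewise term})$ on $\mathscr{U}(Z)$. Writing $\Upsilon = \omega_{2} + \sqrt{-1}\,\omega_{3}$ and taking $\mathbbm{I}_{1}$ to be the base complex structure, one checks that $(g_{\mathscr{U}},\mathbbm{I}_{1},\mathbbm{I}_{2},\mathbbm{I}_{3})$ is an almost hyperhermitian structure, which is precisely Corollary \ref{C2} (whose construction does not use the theorem being proved).

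The next step is to show that this almost hyperhermitian structure is genuinely hyperk\"{a}hler. By a standard lemma in hyperk\"{a}hler geometry --- an almost hyperhermitian structure whose three fundamental $2$-forms are closed is hyperk\"{a}hler --- it suffices that $\omega_{1},\omega_{2},\omega_{3}$ be closed; here $\omega_{2}$ and $\omega_{3}$ are closed because $\Upsilon$ is a holomorphic symplectic form, so everything reduces to $d\omega_{1} = 0$ for $\omega_{1} = g_{\mathscr{U}}(\mathbbm{I}_{1} \otimes \mathrm{Id})$. The computation underlying Corollary \ref{Corollary2} identifies $d\omega_{1} = 0$ on $\mathscr{U}(Z)$ with the Einstein equation $\mathrm{Ric}(g) = \lambda g$ on $Z$ (with $\lambda$ suitably normalized): the Einstein hypothesis is exactly what is needed, and thus $g_{\mathscr{U}}$ is hyperk\"{a}hler. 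The fibrewise $\mathbbm{C}^{\times}$-dilation on $\mathscr{U}(Z)$, together with the $\mathrm{SO}(3)$-rotation of $(\omega_{1},\omega_{2},\omega_{3})$, then assembles into a locally free isometric $\mathbbm{H}^{\times}$-action permuting the complex structures, with $g_{\mathscr{U}}$ a metric cone in the dilation direction --- the defining data of a Swann bundle \cite{Swann}. By Swann's correspondence the quotient $M := \mathscr{U}(Z)/\mathbbm{H}^{\times}$ is a quaternionic K\"{a}hler manifold and $Z \cong \mathscr{U}(Z)/\mathbbm{C}^{\times}$ is its twistor space, with complex contact structure $\theta$; this is essentially LeBrun's argument \cite{LEBRUNFANO}, recast in the language of the almost hyperhermitian structure of Corollary \ref{C2}.

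The hard part is the global analysis in this last step. One must know that the hyperk\"{a}hler cone $(\mathscr{U}(Z),g_{\mathscr{U}})$ is complete --- equivalently, that the $\mathbbm{H}^{\times}$-action is proper with compact quotient --- so that Swann's correspondence produces a genuine smooth compact quaternionic K\"{a}hler manifold rather than a merely local or orbifold object; I expect this to follow from compactness of $Z$ together with the explicit fibrewise form of $g_{\mathscr{U}}$. One must also rule out orbifold singularities of $M$, for which the smoothness of the Fano manifold $Z$ and the fact that $L$ is an honest line bundle with $L^{\otimes(n+1)} = K_{Z}$ in $\mathrm{Pic}(Z)$ should suffice, and observe that positivity of the scalar curvature of $M$ is forced by $\lambda > 0$. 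Granting these points, $Z$ is the twistor space of a compact positive quaternionic K\"{a}hler manifold, which is the converse implication and completes the proof.
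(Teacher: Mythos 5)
Note first that the paper itself offers no proof of this statement: it is quoted verbatim from LeBrun \cite{LEBRUNFANO}, so the only meaningful comparison is with LeBrun's actual argument. Your forward direction (Salamon's twistor construction \cite{SALAMONQUATERNIONIC}) is fine and is indeed just a citation. The converse, however, has a genuine gap at exactly the point where all the difficulty lives.

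You feed the K\"{a}hler--Einstein metric $g$ into the construction of Corollary \ref{C2} as if it were interchangeable with the Ishihara--Konishi associated metric $g_{Z}$. It is not, a priori: the almost hyperhermitian structure of Corollary \ref{C2} exists because $g_{Z}$ is \emph{associated} to the complex almost contact structure (Theorem \ref{IshiharaKonishi}), i.e.\ adapted to the splitting $TZ \cong \mathscr{H}\oplus\mathscr{V}$ and to the local tensors $(u_{i},v_{i},G_{i},H_{i})$; an arbitrary KE metric on $Z$ carries no such compatibility with $\theta$. Concretely, writing $\Upsilon = \omega_{2}+\sqrt{-1}\,\omega_{3}$, the endomorphism $\mathbbm{I}_{2}:=g_{\mathscr{U}}^{-1}\omega_{2}$ squares to $-{\rm{Id}}$ only if $g_{\mathscr{U}}$ and $\Upsilon$ satisfy a pointwise algebraic compatibility, and that is precisely what must be proved, not checked routinely. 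Likewise, your key step ``the computation underlying Corollary \ref{Corollary2} identifies $d\omega_{1}=0$ with the Einstein equation'' reverses the paper's logic: Corollary \ref{Corollary2} proves that condition (1)/(2) on $Q$ \emph{implies} KE on $Z$, and neither the paper nor your sketch supplies the converse implication; the Einstein equation on $Z$ does not formally force normality or closedness for the cone structure built from an unadapted metric. This missing ingredient is the heart of LeBrun's proof: one must show that the holomorphic symplectic form $\Upsilon=d(\pi^{\ast}\theta)$ is parallel for the Ricci-flat K\"{a}hler cone metric on $L^{\times}$ determined by the KE metric (equivalently, that the KE metric is adapted to the contact structure), which LeBrun obtains by a Bochner/Weitzenb\"{o}ck argument on the compact manifold $Z$ using the Einstein condition and integration by parts --- only then does Hitchin's lemma \cite[Lemma 6.8]{Hitchin} apply and the holonomy reduce, after which the Swann-type quotient \cite{Swann} is comparatively routine. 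Without that analytic step your argument does not close; the completeness and properness issues you flag at the end are minor by comparison.
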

\begin{remark}
So far, it is unknown whether there are Fano contact manifolds which do not admit K\"{a}hler-Einstein metrics. Being more precise, if $Z$ is a Fano contact manifold, such that $b_{2}(Z) > 1$, then $Z = {\text{P}}(T^{\ast}\mathbbm{C}P^{n+1})$, see for instance \cite{LeBrunSalamon}, so it is a homogeneous K\"{a}hler-Einstein manifold. In the case that $Z$ is Fano contact with $b_{1}(Z) = 1$, it is conjectured (Conjecture \ref{LSconhecture}) that $Z$ is also homogeneous K\"{a}hler-Einstein manifold, i.e., it is conjectured that $Z = {\text{P}}(\mathcal{O}_{{\text{min}}})$, where $\mathcal{O}_{{\text{min}}} \subset \mathfrak{g}^{\mathbbm{C}}$ is the unique minimal nilpotent orbit associated to some complex simple Lie algebra $\mathfrak{g}^{\mathbbm{C}}$, see for instance \cite{BEAUVILLE},\cite{Boothby2}. 
\end{remark}
\subsection{Kobayashi's Real contact structure} 
\label{Kobayashi'scontact}
Let $(Z, \mathscr{J},\theta)$ be a complex contact manifold, and consider the holomorphic line bundle defined by $L = E^{-1}$, where $E \in {\text{Pic}}(Z)$ is the associated contact line bundle. Fixed a Hermitian structure $\langle \cdot \ , \cdot \rangle_{L} \colon L \times L \to \mathbbm{C}$, consider
\begin{center}
$Q(L) = \big \{ u \in L \ \big | \ \langle u , u \rangle^{\frac{1}{2}} = 1 \ \big \}.$ 
\end{center}
In \cite{Kobayashi}, Kobayashi showed that the complex contact structure $\theta  \in H^{0}(Z,  \Omega_{Z}^{1}\otimes E)$ induces a real contact form $\eta \in \Omega^{1}(Q(L))$ on the total space of the $\rm{U}(1)$-principal bundle $Q(L) \to Z$. This real contact structure can be described as follows: Firstly, notice that  $L = \{ g_{ij}\}$, such that $g_{ij} = f_{ij}^{-1}$. By taking holomorphic coordinates $\varphi_{i} \colon U_{i} \times \mathbbm{C}^{\times} \to L^{\times}$, $i \in I$, we can define
\begin{equation}
\label{localholomorphcform}
\vartheta_{i} :=z_{i} \pi^{\ast}\theta_{i}, \ \ \  \ \ (i \in I)
\end{equation}
where $\pi \colon L^{\times} \to Z$ is the projection map, and $z_{i} = pr_{2} \circ \varphi_{i}^{-1}$. Since $z_{i} = g_{ij}z_{j}$ on $L^{\times}|_{U_{i}\cap U_{j}}$, and $\theta_{i} = f_{ij}\theta$ on $U_{i}\cap U_{j}$, we have a globally well-defined holomorphic 1-form $\vartheta$ on ${\text{Tot}}(L^{\times})$, such that $\vartheta = \vartheta_{i}$ on $L|_{U_{i}}$, $\forall i \in I$. From $\vartheta \in \Omega^{1}_{{\text{Tot}}(L^{\times})}$, we define 
\begin{equation}
\label{Kobayashicontact}
\eta := \frac{1}{2}(\vartheta + \overline{\vartheta})|_{Q(L)} = \mathfrak{Re}(\vartheta)|_{Q(L)}.
\end{equation}
Using the local description of $\vartheta$, and the fact that $\theta_{i} \wedge \big (d\theta_{i} \big)^{n} \neq 0$, it can be shown that $\eta \wedge (d\eta)^{2n+1} \neq 0$. Thus, we have that $(Q(L),\eta)$ is a real contact manifold.

For our purpose, it will be useful to consider the following local description of $\eta \in \Omega^{1}(Q(L))$: Taking local coordinates $\varphi_{i} \colon U_{i} \times \mathbbm{C}^{\times} \to L^{\times}$, we have
\begin{equation}
\label{hermitian}
\langle \varphi_{i}(x,z_{i}),\varphi_{i}(x,z_{i})\rangle_{L} = h_{i}(x)|z_{i}|^{2}
\end{equation}
where $h_{i} \colon U_{i} \to \mathbbm{R}^{+}$ are positive smooth functions, satisfying 
\begin{equation}
\label{localhermitian}
h_{j} = h_{i}|g_{ij}|^{2}  \ \ \ \text{on}  \ \ \ U_{i}\cap U_{j} \neq \emptyset. 
\end{equation}
Thus, for every $\varphi_{i}(x,z_{i}) \in Q(L)|_{U_{i}}$, we have
\begin{equation}
\label{hermitianlocalrelation}
\displaystyle |z_{i}|  = \frac{1}{\sqrt{h_{i}(x)}}.
\end{equation}
Considering polar coordinates $z_{i} = |z_{i}|{\rm{e}}^{\sqrt{-1}\phi_{i}(z_{i})}$, we can describe $\eta$ (locally) as follows
\begin{equation}
\label{localcontactKobayashi}
\eta = \frac{\mathfrak{Re}\big({\rm{e}}^{\sqrt{-1}\phi_{i}} \pi_{Q}^{\ast}(\theta_{i})\big)}{\sqrt{h_{i} \circ \pi_{Q}}} = \cos(\phi_{i}) \pi_{Q}^{\ast} \Big (\frac{\mathfrak{Re}(\theta_{i})}{\sqrt{h_{i}}} \Big) - \sin(\phi_{i})\pi_{Q}^{\ast}\Big (\frac{\mathfrak{Im}(\theta_{i})}{\sqrt{h_{i}}} \Big),
\end{equation}
such that $\pi_{Q}  = \pi \circ \iota$, where $\iota \colon Q(L) \to {\text{Tot}}(L^{\times})$ is the natural inclusion map.

\begin{remark}
\label{transitionangle}
It will be important for us to consider the following generalities. From the definition, we have that $Q(L)$ can be described in terms of its transition functions $t_{ij} \colon U_{i} \cap U_{j} \to \rm{U}(1)$, such that $t_{ij} = \frac{g_{ij}}{|g_{ij}|}$. From this, we have
\begin{equation}
\label{polarcocylce}
t_{ij} = \cos(\psi_{ij}) - \sqrt{-1}\sin(\psi_{ij})={\rm{e}}^{-\sqrt{-1}\psi_{ij}}
\end{equation}
It is worth observing that $\phi_{j} = \phi_{i} + \psi_{ij} \circ \pi_{Q} + 2\pi k$, 
on $ Q(L)|_{U_{i} \cap U_{j}}$, with $k \in \mathbbm{Z}$.
\end{remark}

\subsection{Complex almost contact structures} 
\label{Cplxalmostsection}
Given a complex manifold $(Z, \mathscr{J})$, together with an open covering $\mathscr{U} = \{U_{i}\}_{i \in I}$, we say that $Z$ is a \textit{complex almost contact manifold} if it satisfies the following conditions:
\begin{enumerate}
\item On each $U_{i}$ there exist $1$-forms $u_{i}$, $v_{i} = u_{i} \circ \mathscr{J}$, vector fields $A_{i}, B_{i} = -\mathscr{J}A_{i}$, and $(1,1)$ tensor fields $G_{i}$, $H_{i} = G_{i} \circ \mathscr{J}$, such that
\begin{equation}
\label{relation1}
u_{i}(A_{i}) = 1, \ \ G_{i} \circ G_{i}= - {\rm{Id}} + u_{i} \otimes A_{i} + v_{i} \otimes B_{i}, \ \ G_{i} \circ \mathscr{J} = -\mathscr{J} \circ G_{i}, \ \ u_{i} \circ G_{i}= 0;
\end{equation}
\begin{equation}
\label{relation2}
H_{i}G_{i} = -G_{i}H_{i} = \mathscr{J} + u_{i} \otimes B_{i} - v_{i} \otimes A_{i}, \ \ v_{i} \circ G_{i} = v_{i} \circ H_{i} = u_{i} \circ H_{i} = 0,
\end{equation}
\begin{equation}
\label{relation3}
G_{i}A_{i} = G_{i}B_{i} = H_{i}A_{i} = H_{i}B_{i} = 0, \ \ \ v_{i}(A_{i}) = u_{i}(B_{i}) = 0.
\end{equation}
\item On $U_{i} \cap U_{j} \neq \emptyset$, we have $a,b \in C^{\infty}(U_{i} \cap U_{j})$, satisfying $a^{2} + b^{2} = 1$, such that
\begin{equation}
\label{trabsitioncontact}
\begin{cases} u_{j} = au_{i} - bv_{i}, \\ v_{j} = bu_{i} +av_{i}, \end{cases} \begin{cases} A_{j} = aA_{i} - bB_{i}, \\ B_{j} = bA_{i} + aB_{i}, \end{cases} \begin{cases} G_{j} = aG_{i} - bH_{i},\\ H_{j} = bG_{i} +aH_{i}. \end{cases}
\end{equation}

\end{enumerate}
We say that a complex almost contact manifold $Z$ is a \textit{complex almost contact metric manifold}, if, additionally, it admits a Hermirtian metric $g$ which satisfies 
\begin{equation}
u_{i}(X) = g(A_{i},X) \ \ \ {\text{and}} \ \ \ g(G_{i}X,Y) = -g(X,G_{i}Y),
\end{equation}
for any vector fields $X$ and $Y$. In the setting of complex contact manifolds, we have the following result:

\begin{theorem}[Ishihara \& Konishi, \cite{IshiharaKonishi}] 
\label{IshiharaKonishi}
Let $(Z, \mathscr{J},\theta)$ be a complex contact manifold of complex dimension $2n+1$. Then $Z$ admits a complex almost contact metric structure.
\end{theorem}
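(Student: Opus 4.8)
The plan is to build the local structure tensors explicitly out of the local defining forms $\theta_{i}$ together with a single auxiliary Hermitian metric $g$ on $(Z,\mathscr{J})$ --- which exists by a partition of unity --- and then to check that the cocycle condition $\theta_{i}=f_{ij}\theta_{j}$ automatically produces the transition law \eqref{trabsitioncontact} with $a^{2}+b^{2}=1$. For the rank-one data: on $U_{i}$ write $\theta_{i}=\alpha_{i}+\sqrt{-1}\,\beta_{i}$ with $\alpha_{i},\beta_{i}$ real; since $\theta_{i}$ is of type $(1,0)$ one has $\beta_{i}=-\alpha_{i}\circ\mathscr{J}$, so $\alpha_{i}$ and $\beta_{i}$ are $g$-orthogonal with a common positive pointwise norm $\rho_{i}$. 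Put $u_{i}:=\rho_{i}^{-1}\alpha_{i}$, $v_{i}:=u_{i}\circ\mathscr{J}=-\rho_{i}^{-1}\beta_{i}$, let $A_{i}$ be the $g$-dual vector field of $u_{i}$, and $B_{i}:=-\mathscr{J}A_{i}$; then $u_{i}(A_{i})=v_{i}(B_{i})=1$, $u_{i}(B_{i})=v_{i}(A_{i})=0$, $u_{i}=g(A_{i},\cdot)$, $v_{i}=g(B_{i},\cdot)$, $\ker u_{i}\cap\ker v_{i}$ is the real contact subbundle $\mathscr{H}=\ker\theta$, and $\langle A_{i},B_{i}\rangle$ is a $\mathscr{J}$-invariant $g$-orthogonal complement of $\mathscr{H}$. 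From $\theta_{i}=f_{ij}\theta_{j}$ one gets $\rho_{i}=|f_{ij}|\,\rho_{j}$, and a short computation gives $u_{j}=a\,u_{i}-b\,v_{i}$, $v_{j}=b\,u_{i}+a\,v_{i}$, $A_{j}=a\,A_{i}-b\,B_{i}$, $B_{j}=b\,A_{i}+a\,B_{i}$ with $a=\mathfrak{Re}(f_{ij})/|f_{ij}|$, $b=\mathfrak{Im}(f_{ij})/|f_{ij}|$, so $a^{2}+b^{2}=1$.

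The substantive step is the construction of $G_{i}$. The restriction of the holomorphic $2$-form $d\theta_{i}$ to $\mathscr{H}=\ker\theta_{i}$ is well defined and, because $d\theta_{i}$ is of type $(2,0)$, this restriction $\omega_{i}$ is a $\mathscr{J}$-bilinear skew form on $\mathscr{H}$ (i.e.\ $\omega_{i}(\mathscr{J}X,Y)=\sqrt{-1}\,\omega_{i}(X,Y)$) whose nondegeneracy is equivalent to $\theta_{i}\wedge(d\theta_{i})^{n}\neq0$. Since $g|_{\mathscr{H}}$ is positive definite, I can define $T_{i}\in\End(\mathscr{H})$ by $g(T_{i}X,Y)=\mathfrak{Re}\,\omega_{i}(X,Y)$; the $\mathscr{J}$-bilinearity of $\omega_{i}$ and the $\mathscr{J}$-invariance of $g$ force $T_{i}\mathscr{J}=-\mathscr{J}T_{i}$, and the skew-symmetry of $\omega_{i}$ forces $T_{i}$ to be $g$-skew-adjoint, so $-T_{i}^{2}=T_{i}^{*}T_{i}$ is a positive-definite self-adjoint (and, by nondegeneracy, invertible) automorphism of $\mathscr{H}$. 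Taking the polar decomposition $T_{i}=G_{i}P_{i}$ with $P_{i}=(-T_{i}^{2})^{1/2}$, and using that $P_{i}$ commutes with both $T_{i}$ and $\mathscr{J}$, I obtain that $G_{i}=T_{i}P_{i}^{-1}$ is $g$-orthogonal, anticommutes with $\mathscr{J}$, and satisfies $G_{i}^{2}=-\mathrm{Id}_{\mathscr{H}}$. Extend $G_{i}$ to $TU_{i}$ by $G_{i}A_{i}=G_{i}B_{i}=0$, and set $H_{i}:=G_{i}\circ\mathscr{J}$. From these facts all the identities in \eqref{relation1}, \eqref{relation2}, \eqref{relation3} and all the metric-compatibility conditions follow by direct computation --- e.g.\ $G_{i}^{2}=-\mathrm{Id}+u_{i}\otimes A_{i}+v_{i}\otimes B_{i}$, $u_{i}\circ G_{i}=0$, and $H_{i}G_{i}=-G_{i}^{2}\circ\mathscr{J}=\mathscr{J}+u_{i}\otimes B_{i}-v_{i}\otimes A_{i}$, repeatedly using $G_{i}\circ\mathscr{J}=-\mathscr{J}\circ G_{i}$.

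It remains to handle the transition law for $G_{i}$. Restricting $d\theta_{i}=df_{ij}\wedge\theta_{j}+f_{ij}\,d\theta_{j}$ to $\ker\theta_{j}$ kills the first term, so $\omega_{i}=f_{ij}\,\omega_{j}$ on $\mathscr{H}$; since $T_{j}$ anticommutes with $\mathscr{J}$, this yields $T_{i}=\overline{f_{ij}}\,T_{j}$ (scalar multiplication being carried out with $\mathscr{J}$ in place of $\sqrt{-1}$), hence $P_{i}=|f_{ij}|\,P_{j}$ and $G_{i}=(\overline{f_{ij}}/|f_{ij}|)\,G_{j}$; this unwinds to $G_{j}=a\,G_{i}-b\,H_{i}$ and $H_{j}=b\,G_{i}+a\,H_{i}$, the remaining relations in \eqref{trabsitioncontact} (with the same $a,b$ as above). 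Thus the local data glue, and together with $g$ they give a complex almost contact metric structure on $Z$.

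The only genuinely non-formal point is the construction of $G_{i}$: producing from the holomorphic complex-symplectic form $d\theta_{i}|_{\mathscr{H}}$ and a Hermitian metric an honest $(1,1)$-tensor that squares to $-\mathrm{Id}_{\mathscr{H}}$ and anticommutes with $\mathscr{J}$ --- in other words, a compatible hyperhermitian reduction of the structure group of $\mathscr{H}$. The polar decomposition supplies it, but one has to track carefully (i) the conjugate-linearity of $T_{i}$, which is what converts scalar multiplication by $\sqrt{-1}$ into $\mathscr{J}$-anticommutation for $G_{i}$ and also produces the correct transition cocycle in the previous step, and (ii) the sign in $-T_{i}^{2}>0$, which is precisely where the skew-symmetry of $d\theta_{i}$ --- rather than a symmetric pairing --- is used.
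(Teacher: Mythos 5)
Your construction is correct, but it follows a genuinely different route from the one the paper (following Ishihara--Konishi) sketches in Section \ref{Cplxalmostsection}. There, one first fixes a Hermitian structure on $L=E^{-1}$, normalizes $\theta_{i}$ to $\varpi_{i}=\theta_{i}/\sqrt{h_{i}}$ so that the transition functions become unitary, introduces the local potentials $\sigma_{i}$ and the twisted $(2,0)$-forms $\Omega_{i}=d\varpi_{i}-\sqrt{-1}\sigma_{i}\wedge\varpi_{i}$, takes $\hat{G}_{i}=\mathfrak{Re}(\Omega_{i})$, recovers the vertical distribution $\mathscr{V}$ as the kernel of $\hat{G}_{i}$, and then builds an \emph{associated} Hermitian metric $g_{Z}$ together with $G_{i}$ so that $\hat{G}_{i}(X,Y)=g_{Z}(G_{i}X,Y)$ and $u_{i}=g_{Z}(A_{i},\cdot)$. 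You instead fix an arbitrary Hermitian metric $g$ once and for all, take the vertical space to be $\mathscr{H}^{\perp_{g}}$ with $A_{i}$ the metric dual of $u_{i}$, and manufacture $G_{i}$ by polar decomposition of the conjugate-linear operator $T_{i}$ representing $\mathfrak{Re}\,(d\theta_{i})|_{\mathscr{H}}$; the normalization $\rho_{i}=|\mathfrak{Re}\,\theta_{i}|_{g}$ secretly plays the role of $\sqrt{h_{i}}$ (indeed $\rho_{i}^{2}$ transforms exactly like a Hermitian structure on $L$), and the polar decomposition automatically kills the scale factor $|f_{ij}|$, which is why you never need to normalize the $2$-forms; your transition computations for $u_{i},v_{i},A_{i},B_{i},G_{i},H_{i}$ all produce the same $a,b$ with $a^{2}+b^{2}=1$, so the gluing law Eq.~\ref{trabsitioncontact} holds, and the paper's metric-compatibility requirement (skew-adjointness of $G_{i}$ plus $u_{i}=g(A_{i},\cdot)$) is immediate from orthogonality of the polar factor. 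What your approach buys is brevity and self-containedness: no auxiliary Hermitian structure on $L$, no connection forms $\sigma_{i}$, and an essentially linear-algebraic existence argument valid for any pre-chosen Hermitian metric. What the paper's route buys is a strictly stronger object: a metric \emph{associated} to the structure in the sense $\hat{G}_{i}=g_{Z}(G_{i}\otimes{\rm{Id}})$, equivalently $du_{i}=g_{Z}(G_{i}\otimes{\rm{Id}})+\sigma_{i}\wedge v_{i}$ (Eqs.~\ref{locendomorphisms}, \ref{compatiblemetric}), and the identification of $\mathscr{V}$ with $\ker\hat{G}_{i}$; these stronger properties are exactly what is used later (e.g.\ in Claim \ref{claim2} of the proof of the main theorem to get $d\eta_{\alpha}=g_{Q}(\Phi_{\alpha}\otimes{\rm{Id}})$), so your structure proves the theorem as stated with the paper's definition but would not substitute for the Ishihara--Konishi structure in the rest of the paper. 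One small point to make explicit: nondegeneracy of $\mathfrak{Re}\,\omega_{i}$ (hence invertibility of $T_{i}$) follows from that of $\omega_{i}$ because $\mathfrak{Im}\,\omega_{i}(X,Y)=-\mathfrak{Re}\,\omega_{i}(\mathscr{J}X,Y)$, so the real and complex radicals coincide.
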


In what follows, we give a brief sketch of how such a complex almost contact metric structure provided by the above theorem can be obtained from the underlying complex contact structure, the details and complete proofs omitted here can be found in \cite{IshiharaKonishi}, \cite{Shibuya} and \cite{Foreman}.

Under the hypothesis of Theorem \ref{IshiharaKonishi}, and keeping the notation of the previous section, consider the Hermitian line bundle $(L,\langle \cdot\ , \cdot \rangle_{L})$, such that $L = E^{-1}$. Since $L$ is determined by the cocycles $g_{ij} = f_{ij}^{-1}$, from Eq. \ref{localhermitian} we obtain
\begin{center}
$h_{i} = |f_{ij}|^{2}h_{j}.$
\end{center}
Thus, by setting\footnote{Observe that, denoting $\tau_{i} = \sqrt{h_{i}}$, we have $\frac{f_{ij}}{|f_{ij}|} = \tau_{i}^{-1}f_{ij}\tau_{j}$, cf. \cite[Eq. 2.2]{IshiharaKonishi}.} $\varpi_{i} := \frac{\theta_{i}}{\sqrt{h_{i}}}$, on each $U_{i} \in \mathscr{U}$, we have
\begin{center}
$\varpi_{i} = \frac{f_{ij}}{|f_{ij}|}\varpi_{j} = \frac{1}{t_{ij}} \varpi_{j} \Longrightarrow \varpi_{j} = t_{ij}\varpi_{i} \ \ \ \text{on}  \ \ \ U_{i}\cap U_{j} \neq \emptyset,$
\end{center}
recall that $t_{ij} = \frac{g_{ij}}{|g_{ij}|} = \cos(\psi_{ij}) - \sqrt{-1}\sin(\psi_{ij})$, see Remark \ref{transitionangle}. 

\begin{remark}[Normalized contact structure]
\label{Hermitiandependence}
Notice that $\theta_{i} \wedge \big (d\theta_{i} \big)^{n} \neq 0 \Longrightarrow \varpi_{i} \wedge \big (d\varpi_{i} \big)^{n} \neq 0$, on every $U_{i} \in \mathscr{U}$, so the set $\{\varpi_{i}\}$ described above defines a {\textit{normalized contact structure}} on $Z$, see for instance \cite{Foreman}. By definition, we have that the normalized contact structure $\{\varpi_{i}\}$ depends on the Hermitian structure $\langle \cdot\ , \cdot \rangle_{L}$ on $L$.
\end{remark}

Denoting $\varpi_{i} = u_{i} - \sqrt{-1}v_{i}$, we obtain 
\begin{equation}
\label{realcontact}
u_{i} = \frac{\mathfrak{Re}(\theta_{i})}{\sqrt{h_{i}}} \ \ \text{and} \ \ v_{i} = - \frac{\mathfrak{Im}(\theta_{i})}{\sqrt{h_{i}}},
\end{equation}
notice that $v_{i} = u_{i} \circ \mathscr{J}$. Moreover, since $\varpi_{j} = t_{ij}\varpi_{i}$ on $U_{i} \cap U_{j} \neq \emptyset$, it follows that 
\begin{equation}
u_{j} = \cos(\psi_{ij})u_{i} - \sin(\psi_{ij})v_{i}, \ \ \ v_{j} = \sin(\psi_{ij})u_{i} + \cos(\psi_{ij})v_{i},
\end{equation}
see Eq. \ref{polarcocylce}. Now, we consider the collection of 1-forms $\sigma_{i} \in \Omega^{1}(U_{i})$, $U_{i} \in \mathscr{U}$, defined by
\begin{equation}
\label{gaugefield}
\sqrt{-1}\sigma_{i} := \frac{1}{2}(\partial - \overline{\partial}) \log(h_{i}).
\end{equation}
It is straightforward to verify that 
\begin{equation}
\label{gaugetransform}
\sqrt{-1}\sigma_{j} = \sqrt{-1}\sigma_{i} - d\log \Big ( \frac{f_{ij}}{|f_{ij}|}\Big ) = \sqrt{-1}\sigma_{i} + \frac{dt_{ij}}{t_{ij}} = \sqrt{-1}\big (\sigma_{i} - d\psi_{ij} \big ),
\end{equation}
on $U_{i}\cap U_{j} \neq \emptyset$, see Remark \ref{transitionangle}. From these local data, we define on each $U_{i} \in \mathscr{U}$ a complex-valued $2$-form $\Omega_{i}$ by
\begin{equation}
\Omega_{i} := d\varpi_{i} - \sqrt{-1}\sigma_{i} \wedge \varpi_{i}.
\end{equation}
The complex valued 2-forms defined above satisfies the following relation on $U_{i}\cap U_{j} \neq \emptyset$
\begin{center}
$\Omega_{i} = \frac{f_{ij}}{|f_{ij}|} \Omega_{j} \Longleftrightarrow  \Omega_{j} = t_{ij} \Omega_{i}.$
\end{center}
Moreover, since $ \varpi_{i} \wedge \big (d\varpi_{i} \big)^{n} \neq 0$, on $U_{i} \in \mathscr{U}$, we obtain
\begin{equation}
\varpi_{i} \wedge \Omega_{i}^{n} \neq 0 \ \ \ {\text{on}} \ \ \ U_{i} \in \mathscr{U}.
\end{equation}
Hence, each $\Omega_{i}$ is a complex-valued $2$-form of rank $2n$ on $U_{i}$. 
\begin{remark}[$\Omega$-structure]
It is worth to observe that 
\begin{equation}
d \varpi_{i} = \partial \varpi - \frac{1}{2}\overline{\partial}\log(h_{i}) \wedge \varpi_{i} \Longrightarrow \Omega_{i} = \partial \varpi_{i} - \frac{1}{2}\partial\log(h_{i}) \wedge \varpi_{i},
\end{equation}
so we have $\Omega_{i} \in \Omega^{2,0}(U_{i})$, for every $U_{i} \in \mathscr{U}$. Hence, the collection of (2,0)-forms $\{\Omega_{i}\}$ defines a $\Omega$-structure in the sense of Shibuya \cite{Shibuya}.
\end{remark}

The collection of (2,0)-forms $\{\Omega_{i}\}$ described above gives rise to skew-symmetric local tensor fields $\hat{G}_{i}$ and $\hat{H}_{i}$, defined by
\begin{equation}
\label{locendomorphisms}
\hat{G}_{i} := \mathfrak{Re}(\Omega_{i}) = du_{i} - \sigma_{i} \wedge v_{i} \ \ \ {\text{and}} \ \ \ \hat{H}_{i} := - \mathfrak{Im}(\Omega_{i}) = dv_{i} + \sigma_{i} \wedge u_{i}.
\end{equation}
It is straightforward to verify that
\begin{equation}
\label{localendomorphism}
\Omega_{j} = t_{ij} \Omega_{i} \Longrightarrow \begin{cases}
\hat{G}_{j} = \cos(\psi_{ij})\hat{G}_{i} - \sin(\psi_{ij})\hat{H}_{i}, \\ 
\hat{H}_{j} = \sin(\psi_{ij})\hat{G}_{i} + \cos(\psi_{ij})\hat{H}_{i},
    \                  
  \end{cases} 
\end{equation}
on $U_{i}\cap U_{j} \neq \emptyset$. Also, since $\Omega_{i} \in \Omega^{2,0}(U_{i})$, $\forall U_{i} \in \mathscr{U}$, it can be shown that 
\begin{equation}
\label{relationendomorphism}
\hat{H}_{i}(X,Y) = \hat{G}_{i}(\mathscr{J}X,Y) \ \ \ {\text{and}} \ \ \ \hat{G}_{i}(X,Y) = -\hat{H}_{i}(\mathscr{J}X,Y),
\end{equation}
for any vector fields $X$ and $Y$. Now, if we put for any $x \in U_{i} (\in \mathscr{U})$
\begin{center}
$\mathscr{V}_{i}(x) = \big \{ X \in T_{x}Z \ \big | \ \hat{G}_{i}(X,Y) = 0, \ \forall Y \in T_{x}Z \big \},$
\end{center}
we get on $U_{i}$ a local distribution $\mathscr{V}_{i} \colon x \mapsto \mathscr{V}_{i}(x)$. From Eq. \ref{localendomorphism} and Eq. \ref{relationendomorphism}, it follows that $\mathscr{V}_{i} = \mathscr{V}_{j}$ on $U_{i}\cap U_{j} \neq \emptyset$. Thus, we have a globally well-defined smooth distribution $\mathscr{V}$, which is of real dimension $2$. From this, it can be shown that there exist local defined smooth vector fields $A_{i},B_{i}$ on each $U_{i} \in \mathscr{U}$ which generates $\mathscr{V}_{i}$, satisfying 
\begin{equation}
u_{i}(A_{i}) = v_{i}(B_{i}) = 1 \ \ \ {\text{and}} \ \ \ u_{i}(B_{i}) = v_{i}(A_{i}) = 0.
\end{equation}
see for instance \cite[Lemma 3.1]{IshiharaKonishi}. Moreover, we have on $U_{i}\cap U_{j} \neq \emptyset$ the following relations
\begin{equation}
\label{transitionverctors}
A_{j} = \cos(\psi_{ij})A_{i} - \sin(\psi_{ij})B_{i}, \ \ \ \ B_{j} = \sin(\psi_{ij})A_{i} + \cos(\psi_{ij})B_{i}.
\end{equation}
\begin{remark}
If we consider $\mathscr{H}_{i} = \ker(\varpi_{i})$, on each $U_{i} \in \mathscr{U}$, from the definition of $\varpi_{i}$, we recover the distribution $\mathscr{H}$ which corresponds to $\mathscr{H}^{1,0} = \ker(\theta)$ under the identification $T^{1,0}Z  \cong (TZ,\mathscr{J})$. Moreover, we have the Whitney sum 
\begin{equation}
\label{spliting}
TZ \cong \mathscr{H} \oplus \mathscr{V}.
\end{equation}
Further, since $A_{j} + \sqrt{-1}B_{j} = {\rm{e}}^{\sqrt{-1}\psi_{ij}}(A_{i} + \sqrt{-1}B_{i})$, on $U_{i} \cap U_{j} \neq \emptyset$ (cf. Eq. \ref{polarcocylce}), by complexifying the fibers of $\mathscr{V}$, we obtain a $C^{\infty}$-isomorphism of complex line bunles $\mathscr{V} \cong E$ , so $TZ \cong \mathscr{H} \oplus E$
\end{remark}

Now, using the skew-symmetric local tensor fields $\hat{G}_{i}$, one can construct a Hermitian metric $g_{Z}$ on $Z$, satisfying
\begin{equation}
\label{compatiblemetric}
\hat{G}_{i}(X,Y) = g_{Z}(G_{i}X,Y) \ \ \ {\text{and}} \ \ \ u_{i}(X) = g_{Z}(A_{i},X)
\end{equation}
for any vector fields $X$ and $Y$, see for instance \cite{IshiharaKonishi}. The local $1$-forms $u_{i}$, $v_{i} = u_{i} \circ \mathscr{J}$, vector fields $A_{i}, B_{i} = -\mathscr{J}A_{i}$, and $(1,1)$ tensor fields $G_{i}$, $H_{i} = G_{i} \circ \mathscr{J}$, satisfy Eq. \ref{relation1}, Eq. \ref{relation2} and Eq. \ref{relation3}. Also, we have from Eq. \ref{localendomorphism}, and Eq. \ref{compatiblemetric}, that 
\begin{equation}
\label{overlapendomorphism}
G_{j} = \cos(\psi_{ij})G_{i} - \sin(\psi_{ij})H_{i}, \ \ \ H_{j} = \sin(\psi_{ij})G_{i} + \cos(\psi_{ij})H_{i},
\end{equation}
on $U_{i}\cap U_{j} \neq \emptyset$. From this, we obtain a complex almost contact metric structure on $(Z, \mathscr{J},\theta)$ completely determined by $\theta  \in H^{0}(Z,  \Omega_{Z}^{1}\otimes E)$.
\begin{remark}
Although the definition of $(u_{i},v_{i},A_{i},B_{i},G_{i},H_{i})$ depends on the choice made of some Hermitian structure $\langle \cdot\ , \cdot \rangle_{L}$ on $L$ (see Remark \ref{Hermitiandependence}), we have that the splitting describe in Eq. \ref{spliting} depends only on $\theta  \in H^{0}(Z,  \Omega_{Z}^{1}\otimes E)$. 
\end{remark}

\begin{remark}
\label{localKobayashi}
As we shall see afterwards, the locally defined structure tensors $(u_{i},v_{i},A_{i},B_{i},G_{i},H_{i})$ can be used to construct globally defined tensor fields on the manifold underlying the total space of the $\rm{U}(1)$-principal bundle $Q(L)$. An example of this procedure implicitly appears in Kobayashi's construction (Eq. \ref{Kobayashicontact}). In fact, the local expression of Kobayashi's real contact structure $\eta \in \Omega^{1}(Q(L))$, given in Eq. \ref{localcontactKobayashi}, can be rewritten using Eq. \ref{realcontact} as 
\begin{equation}
\label{localcontact}
\eta = \cos(\phi_{i}) \pi_{Q}^{\ast} (u_{i}) + \sin(\phi_{i})\pi_{Q}^{\ast}(v_{i}).
\end{equation}
Hence, one can realize Kobayashi's real contact structure $\eta$ as a globally defined 1-form obtained by gluing locally defined 1-forms as described on the right-hand side of Eq. \ref{localcontact}. In order to prove our main result, we will show that the locally defined structure tensors $(u_{i},v_{i},A_{i},B_{i},G_{i},H_{i})$ can be used to construct an almost contact structure on $Q(L)$, which is compatible, in a suitable sense, with Hatakeyama's almost contact structure \cite{HATAKEYAMA}.
\end{remark}

\section{Generalities on Almost contact geometry}
\label{Sec3}

In this section, we review some basic generalities on almost contact geometry, contact metric geometry, and almost 3-contact geometry. Further, we also introduce some notations to be used in the next sections. 

\subsection{Almost contact manifolds} Let us recall some basic facts and generalities on almost contact geometry. Our approach is based on \cite{SASAKIHARAKEYMAALMOST}, \cite{Blair}. 

An \textit{almost contact manifold} is a real $(2n+1)$-dimensional smooth manifold $M$ endowed with structure tensors $(\Phi, \xi,\eta)$, such that $\Phi \in {\text{End}}(TM)$, $\xi \in \mathfrak{X}(M)$, and $\eta \in \Omega^{1}(M)$, satisfying
\begin{equation}
\label{almostcontact}
 \Phi \circ \Phi = - {\rm{Id}} + \eta \otimes \xi, \ \ \eta(\xi) = 1.    
\end{equation}

An almost contact structure $(\Phi, \xi,\eta)$ is said to be \textit{normal} if it satisfies 
 \begin{equation}
\label{normality}
\big [ \Phi,\Phi \big ] + 2d\eta \otimes \xi = 0.
\end{equation}
A smooth manifold $M$ endowed with a normal almost contact structure $(\Phi, \xi,\eta)$ is called \textit{normal almost contact manifold}. An alternative way to characterize the normality condition for an almost contact structure is provided as follows: Given an almost contact manifold $M$ with structure tensors $(\Phi,\xi,\eta)$, we can consider the manifold defined by $M \times \mathbbm{R}$. Denoting by $t$ the coordinate on $\mathbbm{R}$, we can define an almost complex structure $\mathbbm{I}$ on $M \times \mathbbm{R}$ such that
\begin{equation}
\label{complexcone}
\mathbbm{I}(X) : = \Phi(X) - \eta(X)\frac{d}{dt}, \ \ \ \ \mathbbm{I}\Big ( \frac{d}{dt} \Big) := \xi.
\end{equation}
for all $X \in \mathfrak{X}(M)$. By following \cite{SASAKIHARAKEYMAALMOST}, we can show that 

\begin{equation}
\label{Normalintegrable}
\big [ \Phi,\Phi \big ] + 2d\eta \otimes \xi = 0 \Longleftrightarrow \big [ \mathbbm{I},\mathbbm{I} \big ] = 0.
\end{equation}
Thus, we have the normality condition for $(\Phi,\xi,\eta)$ equivalent to the integrability for the almost complex structure $\mathbbm{I}$ defined in \ref{complexcone}.

A Riemannian metric $g$ on an almost contact manifold $M$ is said to be compatible with its almost contact structure $(\Phi, \xi,\eta)$ if
\begin{equation}
g(\xi,X) = \eta(X), \ \ \ g(\Phi(X),\Phi(Y)) = g(X,Y) - \eta(X)\eta(Y),
\end{equation}
for any vector fields $X$ and $Y$. An almost contact manifold with a compatible Riemannian metric is called \textit{normal almost contact metric manifold}. In \cite{Sasaki}, it was shown that every almost contact manifold admits a compatible Riemannian metric.

An important result which allows us to get a huge class of examples of normal almost contact manifolds is the following theorem:

\begin{theorem}[Hatakeyama, \cite{HATAKEYAMA}; Morimoto, \cite{MORIMOTO}]
\label{almostcircle}
Let $Q$ be the total space of a $\rm{U}(1)$-principal bundle over a complex manifold $(M,J)$. Suppose we have a connection $1$-form $\sqrt{-1}\eta$ on $Q$, such that $d\eta = \pi^{\ast}(\omega)$, here $\pi$ denotes the projection of $Q$ onto $M$, and suppose that $\omega$ is a $2$-form on $M$ satisfying 

\begin{center}

$\omega(JX,JY) = \omega(X,Y),$

\end{center}
for $X,Y \in \mathfrak{X}(M)$. Then, we can define a $(1,1)$-tensor field $\Phi$ on $Q$, and a vector field $\xi$ on $Q$, such that $(\Phi,\xi,\eta)$ is a normal almost contact structure on $Q$.
\end{theorem}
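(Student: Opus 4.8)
The plan is to construct the tensor fields $\Phi$ and $\xi$ explicitly using the principal connection $\sqrt{-1}\eta$ and the almost complex structure $J$ on the base $M$, and then verify the two algebraic identities in \eqref{almostcontact} together with the normality condition \eqref{normality}, the latter via the equivalence \eqref{Normalintegrable}. First I would fix the setup: let $\xi$ be the fundamental vector field generating the ${\rm U}(1)$-action, so that $\eta(\xi)=1$ and, because $\sqrt{-1}\eta$ is a connection form, $\iota_\xi d\eta = 0$ and $\mathscr{L}_\xi \eta = 0$. The connection gives a splitting $TQ = \mathscr{H} \oplus \mathbbm{R}\xi$ where $\mathscr{H} = \ker\eta$ is isomorphic, via $d\pi$, to $\pi^\ast TM$. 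I would then \emph{define} $\Phi$ by declaring it to act as the horizontal lift of $J$ on $\mathscr{H}$ and to kill $\xi$: concretely, for $X \in \mathfrak{X}(Q)$ write $X = X^h + \eta(X)\xi$ with $X^h$ horizontal, and set $\Phi(X) := \widetilde{J(d\pi(X^h))}$, the horizontal lift of $J$ applied to the projected horizontal part. With this definition $\Phi(\xi)=0$, $\eta\circ\Phi = 0$, and on horizontal vectors $\Phi^2 = -{\rm Id}$ because $J^2=-{\rm Id}$; on $\xi$ both sides of $\Phi\circ\Phi = -{\rm Id}+\eta\otimes\xi$ vanish. This disposes of \eqref{almostcontact}.

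Next I would tackle normality. Rather than compute the Nijenhuis tensor $[\Phi,\Phi]$ directly, I would use the almost complex structure $\mathbbm{I}$ on $Q \times \mathbbm{R}$ from \eqref{complexcone} and the criterion \eqref{Normalintegrable}: it suffices to show $[\mathbbm{I},\mathbbm{I}] = 0$, i.e. that $\mathbbm{I}$ is integrable. Here the key observation is that $Q \times \mathbbm{R} \cong \mathbbm{C}^\times$-bundle over $M$ (the total space of the associated $\mathbbm{C}^\times$-bundle, on which $J$ is already integrable on the base and the connection allows one to transport it), and the hypothesis $\omega(JX,JY) = \omega(X,Y)$ — equivalently that the curvature form $d\eta$ is of type $(1,1)$ with respect to $J$ — is precisely what guarantees that the naturally defined almost complex structure on the total space is integrable. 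The cleanest route is probably a Nijenhuis-tensor computation evaluated on the three types of pairs of vector fields: two horizontal lifts, one horizontal lift and $\xi$ (or $\partial_t$), and the pair $(\xi,\partial_t)$. For two horizontal lifts $\widetilde{X},\widetilde{Y}$, the bracket $[\widetilde{X},\widetilde{Y}]$ differs from $\widetilde{[X,Y]}$ by a vertical term proportional to $d\eta(\widetilde X,\widetilde Y)$ (the curvature), and the $(1,1)$ condition on $\omega = $ (the base form with $d\eta = \pi^\ast\omega$) makes the curvature contributions in $[\mathbbm{I},\mathbbm{I}](\widetilde X,\widetilde Y)$ cancel, while the remaining terms vanish because $J$ is integrable downstairs; the mixed and vertical cases reduce to $\iota_\xi d\eta = 0$ and $\mathscr{L}_\xi(\text{horizontal distribution}) = 0$.

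The main obstacle I anticipate is the normality verification: keeping careful track of the curvature terms in the Nijenhuis tensor of $\mathbbm{I}$ and seeing exactly where the $J$-invariance of $\omega$ enters is the computational heart of the argument, and it is easy to mismanage signs or to conflate the curvature $d\eta$ on $Q$ with its pullback origin $\omega$ on $M$. A secondary (minor) point is checking that $\Phi$ and $\xi$ are genuinely well-defined smooth global tensor fields — this is immediate from the definition via the globally defined connection and the globally defined $J$, but worth a sentence. Since the statement is essentially classical (Morimoto–Hatakeyama), I would, in the interest of brevity, carry out the $[\mathbbm{I},\mathbbm{I}]=0$ computation in the three cases above and then invoke \eqref{Normalintegrable} to conclude that $(\Phi,\xi,\eta)$ is normal, referring to \cite{HATAKEYAMA} and \cite{MORIMOTO} for the fuller original treatment.
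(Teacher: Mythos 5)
Your construction is exactly the paper's: $\xi$ is the generator of the ${\rm U}(1)$-action, $\eta$ the connection form normalized by $\eta(\xi)=1$, and $\Phi(X)=(J\pi_{\ast}X)^{\#}$ via the horizontal lift, with the almost contact identities checked directly and normality reduced to the integrability of $J$ together with the $(1,1)$-type (i.e.\ $J$-invariance) of $\omega$, the computational details being deferred to \cite{HATAKEYAMA} and \cite{MORIMOTO} just as the paper does. So the proposal is correct and takes essentially the same approach.
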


The description of the normal almost contact structure given in the above theorem can be easily described as follows: Consider $\xi = \frac{\partial}{\partial \phi} \in \mathfrak{X}(Q)$ as being the vector field defined by the infinitesimal action of $\mathfrak{u}(1)$ on $Q$ and let $\sqrt{-1}\eta \in \Omega^{1}(Q;\mathfrak{u}(1))$ be a connection $1$-form, such that $d\eta = \pi^{\ast}(\omega)$. Without loss of generality, we can suppose that $\eta(\xi) = 1$. Now, we define $\Phi \in {\text{End}}(TQ)$ by setting
\begin{equation}
\label{IKendomorphism}
\Phi(X) := (J\pi_{\ast}(X))^{\#},  \ \ \ \ \forall X \in TQ.
\end{equation}
Here we denote by $(J\pi_{\ast}(X))^{\#}$ the horizontal lift of $J\pi_{\ast}(X)$ relative to the connection $\sqrt{-1}\eta \in \Omega^{1}(Q;\mathfrak{u}(1))$. A straightforward computation shows that $(\Phi, \xi, \eta)$ defines an almost contact structure, see for instance \cite[Theorem 1]{HATAKEYAMA}. For the normality condition, we just need to check that

\begin{center}

$\big [ J,J \big ] \equiv 0$  \ \ and \ \ $\omega \in \Omega^{1,1}(M) \implies \big [ \Phi ,\Phi \big ] + 2d\eta \otimes \xi = 0,$

\end{center}
the details of the implication above can be found in \cite[Theorem 2]{HATAKEYAMA}, \cite[Theorem 6]{MORIMOTO}.

\begin{remark}[IK-connection]
\label{IKconnection}
Let $(Z, \mathscr{J},\theta)$ be a complex contact manifold of complex dimension $2n+1$. As we have seen, there exists a system of locally defined $1$-forms $\mathscr{A} = \{(\sigma_{i},U_{i})\}$, which satisfies 
\begin{center}
$\sqrt{-1}\sigma_{j} = \sqrt{-1}\big (\sigma_{i} - d\psi_{ij} \big )  \ \ \text{on}  \ \ \ U_{i}\cap U_{j} \neq \emptyset,$
\end{center}
see Eq. \ref{gaugefield} and Eq. \ref{gaugetransform}. From the relation above, we have a connection $1$-form $\sqrt{-1}\eta_{1} \in \Omega^{1}(Q(L);\mathfrak{u}(1))$, such that 
\begin{equation}
\label{localIK}
\eta_{1} = \pi_{Q}^{\ast}(\sigma_{i}) + d\phi_{i} \ \ \ \text{on}  \ \ \ Q(L)|_{U_{i}},
\end{equation}
notice that $d\phi_{j} = d\phi_{i} + \pi_{Q}^{\ast}(d\psi_{ij})$, on $U_{i}\cap U_{j} \neq \emptyset$, see Remark \ref{transitionangle}. The connection $\sqrt{-1}\eta_{1}$ is called \textit{Ishihara-Konishi connection} (IK-connection). Further, we have $d\eta_{1} = \pi_{Q}^{\ast}(\omega)$, where $\omega \in \Omega^{1,1}(Z)$, such that 
\begin{equation}
\label{curvature}
\omega = \sqrt{-1} \partial \overline{\partial} \log(h_{i}) \ \ \ \text{on} \ \ \ U_{i} \in \mathscr{U}.
\end{equation}
Therefore, from Theorem \ref{almostcircle}, we obtain a normal almost contact structure $(\Phi_{1},\xi_{1},\eta_{1})$ on $Q(L)$. 
\end{remark}

\subsection{Contact metric structures} A {\textit{contact manifold}} is a pair $(M,\eta)$, where $M$ is manifold of real dimension $2n+1$, and $\eta$ is a 1-form ({\textit{contact structure}}) which satisfies $\eta \wedge (d\eta)^{n} \neq 0$. Associated to the contact structure $\eta$, we have a smooth vector field $\xi \in \mathfrak{X}(M)$, called \textit{characteristic (or Reeb) vector field}, which satisfies 
\begin{center}
$\xi \lrcorner d\eta = 0$ \ \ \ and \ \ \ $\xi \lrcorner \eta = 1.$
\end{center}
Given a contact manifold $(M,\eta)$, with characteristic vector field $\xi \in \mathfrak{X}(M)$, we say that a Riemannian metric $g$ is an {\textit{associated metric}} if the following properties are satisfied:
\begin{enumerate}

\item $\eta(X) = g(X,\xi)$;
\item There exists a (1,1)-tensor field $\Phi \in {\text{End}}(TM)$, satisfying 
\begin{equation}
\Phi \circ \Phi = - {\rm{Id}} + \eta \otimes \xi \ \ \ {\text{and}} \ \ \ d\eta = g(\Phi \otimes {\rm{Id}} ). 
\end{equation}
\end{enumerate}
A contact manifold with an associated metric is called {\textit{contact metric manifold}}. As one can see, every contact metric manifold is particularly an almost contact metric manifold. In fact, from  $(1)$ and $(2)$, we have
\begin{equation}
\label{compatible}
g(\Phi(X),\Phi(Y)) = d\eta(X,\Phi(Y)) = -d\eta(\Phi(Y),X) = -g(\Phi(\Phi(Y)),X) = g(X,Y) -\eta(X)\eta(Y).
\end{equation}
Given a contact metric manifold $(M,\eta)$, with associated metric $g$, if the underlying almost contact structure $(\Phi,\xi,\eta)$ is normal, then $M$ is called {\textit{Sasaki manifold}} \cite{Sasaki}.

\begin{remark}
\label{SEandKE}
Let $M$ be a smooth manifold with a Sasakian structure $(g,\Phi,\xi,\eta)$, such that $\dim_{\mathbbm{R}}(M) = 2n+1$. By setting $\mathscr{D} := \ker(\eta)$, it can be shown that $(\mathscr{D},\Phi|_{\mathscr{D}},g|_{\mathscr{D}})$ defines a K\"{a}hler foliation on $M$, see for instance \cite[Corollary 6.5.11]{Sasakigeometry}. Defining $g^{T} := g|_{\mathscr{D}}$, we have the following relations:
\begin{enumerate}
\item ${\text{Ric}}_{g}(X,Y) = {\text{Ric}}_{g^{T}}(X,Y) - 2g(X,Y)$, \ \ \ $\forall X,Y \in \mathscr{D}$,

\item ${\text{Ric}}_{g}(X,\xi) = 2n\eta(X)$, \ \ \ $\forall X \in TM$,
\end{enumerate}
see for instance \cite[Theorem 7.3.12]{Sasakigeometry}. Hence, if $g$ is a Einstein, we have ${\text{Scal}}_{g} = 2n(2n+1)$, and ${\text{Ric}}_{g^{T}} = 2(n+1)g^{T}$. In this last case, if the characteristic foliation $\mathscr{F}_{\xi}$ defined by the characteristic vector field $\xi \in \mathfrak{X}(M)$ is regular, then the transverse K\"{a}hler-Einstein structure $(\mathscr{D}, \Phi|_{\mathscr{D}},g^{T})$ pushes down to a K\"{a}hler-Einstein structure on the smooth manifold $N:= M/\mathscr{F}_{\xi}$ defined by the leaf space.
\end{remark}

\subsection{Almost contact 3-structures} An \textit{almost contact} 3-\textit{structure} on a smooth manifold $M$ is defined by three almost contact structures $(\Phi_{\alpha},\xi_{\alpha},\eta_{\alpha})$, $\alpha = 1,2,3$, satisfying
\begin{equation}
\label{R1}
\Phi_{\alpha} \circ \Phi_{\beta} -  \eta_{\beta} \otimes \xi_{\alpha}  = - \Phi_{\beta} \circ \Phi_{\alpha} +  \eta_{\alpha} \otimes \xi_{\beta}  = \Phi_{\gamma},
\end{equation}
\begin{equation}
\Phi_{\alpha}(\xi_{\beta}) = -\Phi_{\beta}(\xi_{\alpha}) = \xi_{\gamma}, \ \ \eta_{\alpha} \circ \Phi_{\beta} = - \eta_{\beta} \circ \Phi_{\alpha} = \eta_{\gamma},
\end{equation}
for any cyclic permutation $(\alpha,\beta,\gamma)$ of $(1,2,3)$. The notion of almost contact 3-structure was introduced by Kuo \cite{Kuo} and independently under the name almost coquaternion structure by Udriste \cite{Udriste}. A manifold endowed with an almost contact 3-structure is called {\textit{almost 3-contact manifold}}. For our purpose, it will be important to consider the following results:

\begin{theorem}[Kuo, \cite{Kuo}]
\label{Kuotheorem}
If a smooth manifold admits two almost contact structures $(\Phi_{\alpha},\xi_{\alpha},\eta_{\alpha})$, $\alpha = 1,2$, satisfying
\begin{equation}
\label{L1eq}
\Phi_{1}(\xi_{2}) = -\Phi_{2}(\xi_{1}), \ \ \eta_{1} \circ \Phi_{2} = - \eta_{2} \circ \Phi_{1}, \ \ \eta_{1}(\xi_{2}) = \eta_{2}(\xi_{1}) = 0,
\end{equation}
\begin{equation}
\label{L2eq}
\Phi_{1} \circ \Phi_{2} - \eta_{2} \otimes \xi_{1} = - \Phi_{2} \circ \Phi_{1} + \eta_{1} \otimes \xi_{2},
\end{equation}
then it admits an almost contact $3$-structure.
\end{theorem}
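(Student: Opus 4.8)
The plan is to produce the missing third almost contact structure explicitly from $(\Phi_{1},\xi_{1},\eta_{1})$ and $(\Phi_{2},\xi_{2},\eta_{2})$, and then to verify, by direct tensor calculus, that the resulting triple satisfies the full list of axioms of an almost contact $3$-structure. Guided by \eqref{R1}, I would put
\[
\Phi_{3} := \Phi_{1} \circ \Phi_{2} - \eta_{2} \otimes \xi_{1}, \qquad \xi_{3} := \Phi_{1}(\xi_{2}), \qquad \eta_{3} := \eta_{1} \circ \Phi_{2}.
\]
By the hypotheses \eqref{L1eq}--\eqref{L2eq} one simultaneously has $\Phi_{3} = -\Phi_{2}\circ\Phi_{1} + \eta_{1}\otimes\xi_{2}$, $\xi_{3} = -\Phi_{2}(\xi_{1})$, and $\eta_{3} = -\eta_{2}\circ\Phi_{1}$; having both presentations available will be essential to keep the later computations from becoming circular. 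It then remains to check (i) that $\Phi_{3}\circ\Phi_{3} = -{\rm{Id}} + \eta_{3}\otimes\xi_{3}$ and $\eta_{3}(\xi_{3}) = 1$, so that $(\Phi_{3},\xi_{3},\eta_{3})$ is itself an almost contact structure, and (ii) that the relations \eqref{R1}, together with their companions $\Phi_{\alpha}(\xi_{\beta}) = -\Phi_{\beta}(\xi_{\alpha}) = \xi_{\gamma}$ and $\eta_{\alpha}\circ\Phi_{\beta} = -\eta_{\beta}\circ\Phi_{\alpha} = \eta_{\gamma}$, hold for the cyclic permutations $(2,3,1)$ and $(3,1,2)$ (the case $(1,2,3)$ being built into the definition of $\Phi_{3}$).

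The first step is to collect elementary identities. Evaluating \eqref{almostcontact} on $\xi_{\alpha}$ gives $\Phi_{\alpha}(\xi_{\alpha}) = 0$ and $\eta_{\alpha}\circ\Phi_{\alpha} = 0$ for $\alpha = 1,2$; feeding these into \eqref{L1eq}--\eqref{L2eq} and applying $\Phi_{\alpha}^{2} = -{\rm{Id}} + \eta_{\alpha}\otimes\xi_{\alpha}$ to $\xi_{\beta}$ produces a first layer of relations, for instance $\Phi_{3}(\xi_{1}) = \Phi_{1}\Phi_{2}(\xi_{1}) = -\Phi_{1}^{2}(\xi_{2}) = \xi_{2}$, and similarly $\Phi_{3}(\xi_{2}) = -\xi_{1}$, $\Phi_{1}(\xi_{3}) = -\xi_{2}$, $\Phi_{2}(\xi_{3}) = \xi_{1}$, $\eta_{\alpha}(\xi_{\beta}) = 0$ for $\alpha\neq\beta$, $\eta_{3}\circ\Phi_{2} = -\eta_{1}$, $\eta_{2}\circ\Phi_{3} = \eta_{1}$. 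The crux is then $\Phi_{3}^{2} = -{\rm{Id}} + \eta_{3}\otimes\xi_{3}$. Expanding $\Phi_{3}^{2} = (\Phi_{1}\Phi_{2} - \eta_{2}\otimes\xi_{1})^{2}$, the term $(\eta_{2}\otimes\xi_{1})^{2}$ vanishes because $\eta_{2}(\xi_{1}) = 0$, and the two mixed terms collapse, via the first layer, to $\eta_{2}\otimes\xi_{2}$ and $\eta_{1}\otimes\xi_{1}$; so everything reduces to the quartic term $\Phi_{1}\Phi_{2}\Phi_{1}\Phi_{2}$. To evaluate this without running into a circular recursion, I would first compute $\Phi_{2}\Phi_{1}\Phi_{2}$: writing $\Phi_{1}\Phi_{2} = \Phi_{3} + \eta_{2}\otimes\xi_{1}$ and using $\Phi_{2}\Phi_{3} = \Phi_{1} + \eta_{3}\otimes\xi_{2}$ — which follows from the \emph{other} presentation $\Phi_{3} = -\Phi_{2}\Phi_{1} + \eta_{1}\otimes\xi_{2}$ together with $\Phi_{2}^{2} = -{\rm{Id}} + \eta_{2}\otimes\xi_{2}$ — one obtains $\Phi_{2}\Phi_{1}\Phi_{2} = \Phi_{1} + \eta_{3}\otimes\xi_{2} - \eta_{2}\otimes\xi_{3}$; applying $\Phi_{1}$ once more and using $\Phi_{1}^{2} = -{\rm{Id}} + \eta_{1}\otimes\xi_{1}$, $\Phi_{1}(\xi_{2}) = \xi_{3}$, $\Phi_{1}(\xi_{3}) = -\xi_{2}$ yields $\Phi_{1}\Phi_{2}\Phi_{1}\Phi_{2} = -{\rm{Id}} + \eta_{1}\otimes\xi_{1} + \eta_{2}\otimes\xi_{2} + \eta_{3}\otimes\xi_{3}$. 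Subtracting the two mixed terms then gives exactly $\Phi_{3}^{2} = -{\rm{Id}} + \eta_{3}\otimes\xi_{3}$, while $\eta_{3}(\xi_{3}) = \eta_{1}(\Phi_{2}\xi_{3}) = \eta_{1}(\xi_{1}) = 1$.

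I expect this composition bookkeeping — in particular, choosing at each substitution which of the two equivalent forms of $\Phi_{3}$ (or of the triple products) makes the recursion terminate — to be the only genuine obstacle; the rest is routine. Once it is in place, the relations for the permutations $(2,3,1)$ and $(3,1,2)$ follow from entirely analogous short computations: for instance $\Phi_{3}\Phi_{2} = \Phi_{1}\Phi_{2}^{2} = -\Phi_{1} + \eta_{2}\otimes\xi_{3}$, so $-\Phi_{3}\Phi_{2} + \eta_{2}\otimes\xi_{3} = \Phi_{1}$, which combined with $\Phi_{2}\Phi_{3} - \eta_{3}\otimes\xi_{2} = \Phi_{1}$ (already obtained above) establishes the $(2,3,1)$ identity of \eqref{R1}, and a similar pair of manipulations — e.g. $\Phi_{3}\Phi_{1} = \Phi_{2} + \eta_{1}\otimes\xi_{3}$ from $\Phi_{3} = -\Phi_{2}\Phi_{1} + \eta_{1}\otimes\xi_{2}$ — gives the $(3,1,2)$ identity. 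The companion relations for the $\xi_{\alpha}$ and $\eta_{\alpha}$ have all been recorded in the first layer, so collecting these verifications shows that $(\Phi_{\alpha},\xi_{\alpha},\eta_{\alpha})$, $\alpha = 1,2,3$, is an almost contact $3$-structure, completing the proof.
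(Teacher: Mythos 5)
Your proposal is correct: the paper itself does not prove this statement (it is quoted from Kuo, with the third structure $\xi_{3}=\Phi_{1}(\xi_{2})$, $\eta_{3}=\eta_{1}\circ\Phi_{2}$, $\Phi_{3}=\Phi_{1}\circ\Phi_{2}-\eta_{2}\otimes\xi_{1}$ merely recorded in the remark following it), and your construction is exactly that one, with the key computations — the mixed terms $\eta_{2}\otimes\xi_{2}$, $\eta_{1}\otimes\xi_{1}$, the identity $\Phi_{1}\Phi_{2}\Phi_{1}\Phi_{2}=-{\rm{Id}}+\eta_{1}\otimes\xi_{1}+\eta_{2}\otimes\xi_{2}+\eta_{3}\otimes\xi_{3}$ via $\Phi_{2}\Phi_{3}=\Phi_{1}+\eta_{3}\otimes\xi_{2}$, and the cyclic relations for $(2,3,1)$ and $(3,1,2)$ — all checking out. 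The only cosmetic point is that the relations $\eta_{3}\circ\Phi_{1}=\eta_{2}$ and $\eta_{1}\circ\Phi_{3}=-\eta_{2}$ were not literally listed in your ``first layer,'' though they follow by the same one-line computations (e.g.\ $\eta_{3}\circ\Phi_{1}=-\eta_{2}\circ\Phi_{1}^{2}=\eta_{2}$), so the argument is complete.
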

\begin{remark}
Under the hypothesis of the above theorem, the third almost contact structure can be obtained by setting
\begin{equation}
\label{thirdalmostcontact}
\xi_{3} = \Phi_{1}(\xi_{2}), \ \ \eta_{3} = \eta_{1} \circ \Phi_{2}, \ \ \Phi_{3} = \Phi_{1} \circ \Phi_{2} - \eta_{2} \otimes \xi_{1},
\end{equation}
\end{remark}
\begin{theorem}[Yano, Ishihara, Konishi, \cite{Yano}]
\label{3normality}
If, for an almost contact 3-structure $(\Phi_{\alpha},\xi_{\alpha},\eta_{\alpha})$, $\alpha = 1,2,3$, any two of almost contact structures $(\Phi_{\alpha},\xi_{\alpha},\eta_{\alpha})$ are normal, then the third is so.
\end{theorem}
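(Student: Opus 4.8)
The plan is to deduce the statement from the equivalence between normality and integrability of the cone almost complex structure recorded in Eq.~\ref{Normalintegrable}, working on the product \(M\times\mathbbm{R}\). For each \(\alpha\in\{1,2,3\}\) let \(\mathbbm{I}_{\alpha}\) be the almost complex structure on \(M\times\mathbbm{R}\) attached to \((\Phi_{\alpha},\xi_{\alpha},\eta_{\alpha})\) as in Eq.~\ref{complexcone}, so that by Eq.~\ref{Normalintegrable} the structure \((\Phi_{\alpha},\xi_{\alpha},\eta_{\alpha})\) is normal if and only if \([\mathbbm{I}_{\alpha},\mathbbm{I}_{\alpha}]=0\). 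First I would record the elementary consequences \(\Phi_{\alpha}\xi_{\alpha}=0\) and \(\eta_{\alpha}\circ\Phi_{\alpha}=0\) of the almost contact axioms \ref{almostcontact}, together with the identity \(\eta_{\alpha}(\xi_{\beta})=\delta_{\alpha\beta}\), which follows from \(\xi_{\gamma}=\Phi_{\alpha}(\xi_{\beta})\) and \(\eta_{\alpha}\circ\Phi_{\alpha}=0\) for a cyclic triple \((\alpha,\beta,\gamma)\). Then, evaluating separately on \(X\in\mathfrak{X}(M)\) and on \(\tfrac{d}{dt}\) and using the relations in Eq.~\ref{R1} (both halves of the chain of equalities) together with \(\Phi_{\alpha}(\xi_{\beta})=\xi_{\gamma}\) and \(\eta_{\alpha}\circ\Phi_{\beta}=\eta_{\gamma}\), a short computation gives
\[
\mathbbm{I}_{\alpha}\circ\mathbbm{I}_{\alpha}=-{\rm{Id}},\qquad \mathbbm{I}_{\alpha}\circ\mathbbm{I}_{\beta}=\mathbbm{I}_{\gamma}=-\mathbbm{I}_{\beta}\circ\mathbbm{I}_{\alpha}
\]
for every cyclic permutation \((\alpha,\beta,\gamma)\) of \((1,2,3)\); that is, \((\mathbbm{I}_{1},\mathbbm{I}_{2},\mathbbm{I}_{3})\) is an almost hypercomplex structure on \(M\times\mathbbm{R}\). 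Hence the theorem reduces to the purely complex-geometric fact: \emph{in an almost hypercomplex structure, if two of the three almost complex structures are integrable, then so is the third.}

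To prove this fact I would use the elementary reformulation that \([\mathbbm{I}_{\alpha},\mathbbm{I}_{\alpha}]=0\) is equivalent to involutivity of the \((0,1)\)-distribution \(T^{0,1}_{\alpha}=\{\,v+\sqrt{-1}\,\mathbbm{I}_{\alpha}v\,\}\) under the complex-bilinear extension of the Lie bracket. After relabelling (legitimate, since the relations above are invariant under cyclic permutation of the indices), assume \(\mathbbm{I}_{1}\) and \(\mathbbm{I}_{2}\) are integrable; the goal is to show \(T^{0,1}_{3}\) is involutive, where \(\mathbbm{I}_{3}=\mathbbm{I}_{1}\mathbbm{I}_{2}\). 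Since \(\mathbbm{I}_{2}\) anticommutes with \(\mathbbm{I}_{1}\) it interchanges \(T^{1,0}_{1}\) and \(T^{0,1}_{1}\), and a direct eigenspace computation (together with injectivity of \(w\mapsto w+\mathbbm{I}_{2}w\) on \(T^{0,1}_{1}\)) gives \(T^{0,1}_{3}=\{\,w+\mathbbm{I}_{2}w:\ w\in T^{0,1}_{1}\,\}\), with \(\mathbbm{I}_{2}w\in T^{1,0}_{1}\); likewise \(T^{0,1}_{2}=\{\,w-\sqrt{-1}\,\mathbbm{I}_{2}w:\ w\in T^{0,1}_{1}\,\}\). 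Expanding \([\,w+\mathbbm{I}_{2}w,\,w'+\mathbbm{I}_{2}w'\,]\) into its four terms and sorting by \(\mathbbm{I}_{1}\)-type, the term \([w,w']\) lies in \(T^{0,1}_{1}\) and \([\mathbbm{I}_{2}w,\mathbbm{I}_{2}w']\) lies in \(T^{1,0}_{1}=\overline{T^{0,1}_{1}}\) (both by integrability of \(\mathbbm{I}_{1}\)), while the two mixed terms are handled by feeding in involutivity of \(T^{0,1}_{2}\) after rewriting \(w\) and \(\mathbbm{I}_{2}w'\) through the description of \(T^{0,1}_{2}\); the outcome is that the bracket again has the form \(u+\mathbbm{I}_{2}u\) with \(u\in T^{0,1}_{1}\), hence lies in \(T^{0,1}_{3}\). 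Equivalently one can run the argument at the level of Nijenhuis tensors: substitute \(\mathbbm{I}_{3}=\mathbbm{I}_{1}\mathbbm{I}_{2}\) into \([\mathbbm{I}_{3},\mathbbm{I}_{3}]\) and repeatedly apply the vanishing of \([\mathbbm{I}_{1},\mathbbm{I}_{1}]\) and \([\mathbbm{I}_{2},\mathbbm{I}_{2}]\) (in the form \([\mathbbm{I}_{a}U,\mathbbm{I}_{a}V]=\mathbbm{I}_{a}[\mathbbm{I}_{a}U,V]+\mathbbm{I}_{a}[U,\mathbbm{I}_{a}V]+[U,V]\)) together with \(\mathbbm{I}_{1}\mathbbm{I}_{2}=-\mathbbm{I}_{2}\mathbbm{I}_{1}\), choosing the order of reductions so that every term not forced to vanish cancels in pairs. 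I expect this last bookkeeping — making the mixed terms cancel cleanly — to be the main obstacle: it is lengthy, though not conceptually deep, and it is exactly where the anticommutation relation is used.

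Finally, once \([\mathbbm{I}_{3},\mathbbm{I}_{3}]=0\) is established, Eq.~\ref{Normalintegrable} yields that \((\Phi_{3},\xi_{3},\eta_{3})\) is normal. The remaining cases, in which the two normal structures carry indices \(\{1,3\}\) or \(\{2,3\}\) rather than \(\{1,2\}\), follow verbatim after a cyclic relabelling, using that the identities \(\mathbbm{I}_{\alpha}\mathbbm{I}_{\beta}=\mathbbm{I}_{\gamma}\) established in the first step are invariant under cyclic permutation of \((1,2,3)\). This completes the argument.
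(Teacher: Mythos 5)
The paper offers no proof of Theorem \ref{3normality}; it is quoted from Yano--Ishihara--Konishi \cite{Yano}, so there is no internal argument to compare yours against, and I judge the proposal on its own merits: it is correct, and it is a genuinely different route from the one in the cited source. Your preliminary reductions are sound: $\Phi_{\alpha}\xi_{\alpha}=0$, $\eta_{\alpha}\circ\Phi_{\alpha}=0$ and $\eta_{\alpha}(\xi_{\beta})=\delta_{\alpha\beta}$ do follow from Eq.~\ref{almostcontact} and the defining relations, and the quaternionic identities $\mathbbm{I}_{\alpha}\mathbbm{I}_{\beta}=\mathbbm{I}_{\gamma}=-\mathbbm{I}_{\beta}\mathbbm{I}_{\alpha}$ for the cone structures of Eq.~\ref{complexcone} are exactly what the paper records in Remark \ref{hypercomplex}. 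Your key lemma (in an almost hypercomplex triple, integrability of two structures forces integrability of the third) is true, and the step you flag as the main obstacle does close: setting $S=[w,w']-[\mathbbm{I}_{2}w,\mathbbm{I}_{2}w']$ and $T=[w,\mathbbm{I}_{2}w']+[\mathbbm{I}_{2}w,w']$, involutivity of $T^{0,1}_{2}$ and of its conjugate gives $T=\mathbbm{I}_{2}S$, and splitting this identity into $\mathbbm{I}_{1}$-types, together with involutivity of $T^{1,0}_{1}$ and $T^{0,1}_{1}$, shows the bracket again has the form $u+\mathbbm{I}_{2}u$; everything stays at the level of Nijenhuis tensors and involutivity, so Eq.~\ref{Normalintegrable} may be used in both directions without any appeal to Newlander--Nirenberg. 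One harmless slip: with your convention $T^{0,1}_{\alpha}=\{v+\sqrt{-1}\,\mathbbm{I}_{\alpha}v\}$ (the $-\sqrt{-1}$ eigenspace) and $\mathbbm{I}_{3}=\mathbbm{I}_{1}\mathbbm{I}_{2}$, the set $\{w+\mathbbm{I}_{2}w:\ w\in T^{0,1}_{1}\}$ is the $(1,0)$-space of $\mathbbm{I}_{3}$ rather than its $(0,1)$-space; since involutivity of either bundle is equivalent to $[\mathbbm{I}_{3},\mathbbm{I}_{3}]=0$, this does not affect the argument. For comparison: the original proof in \cite{Yano} is a direct tensor identity on $M$ expressing the normality tensor of the third structure through those of the other two, while your cone argument trades that computation for the classical ``two integrable implies the third'' fact for anticommuting complex structures (essentially Obata), which you prove rather than cite; this is more conceptual, at the cost of the quaternionic bookkeeping on $M\times\mathbbm{R}$ (already available in Remark \ref{hypercomplex}) and of the final cyclic relabelling, which is legitimate because the defining relations hold for all cyclic permutations of $(1,2,3)$.
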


\begin{remark}
\label{hypercomplex}
Given an almost contact $3$-structure $(\Phi_{\alpha},\xi_{\alpha},\eta_{\alpha})$, $\alpha = 1,2,3$, on a smooth manifold $M$, following Eq. \ref{complexcone}, we have three almost complex structures on $M \times \mathbbm{R}$, such that 
\begin{center}
$\displaystyle \mathbbm{I}_{\alpha}(X) : = \Phi_{\alpha}(X) - \eta_{\alpha}(X)\frac{d}{dt}, \ \ \ \ \mathbbm{I}_{\alpha}\Big ( \frac{d}{dt} \Big) := \xi_{\alpha}$, \ \ \ \ $(\alpha = 1,2,3)$
\end{center}
for every $X \in \mathfrak{X}(M)$. From Eq. \ref{R1}, it can be shown that 
\begin{equation}
\mathbbm{I}_{\alpha} \circ \mathbbm{I}_{\beta} = - \mathbbm{I}_{\beta} \circ \mathbbm{I}_{\alpha} = \mathbbm{I}_{\gamma}, 
\end{equation}
for any cyclic permutation $(\alpha,\beta,\gamma)$ of $(1,2,3)$. Therefore, we have that $(M \times \mathbbm{R}, \mathbbm{I}_{1},\mathbbm{I}_{2},\mathbbm{I}_{3})$ defines an {\textit{almost hypercomplex manifold}}. In the case that each one of the almost complex structures $\mathbbm{I}_{1},\mathbbm{I}_{2},\mathbbm{I}_{3}$, is integrable, we have that $(M \times \mathbbm{R}, \mathbbm{I}_{1},\mathbbm{I}_{2},\mathbbm{I}_{3})$ is a {\textit{hypercomplex manifold}}. Notice that, if any two of the almost contact structures $(\Phi_{\alpha},\xi_{\alpha},\eta_{\alpha})$, $\alpha = 1,2,3$, are normal, then from Theorem \ref{3normality}, and Eq. \ref{Normalintegrable}, we have that $(M \times \mathbbm{R}, \mathbbm{I}_{1},\mathbbm{I}_{2},\mathbbm{I}_{3})$ is hypercomplex.
\end{remark}

Given an almost contact $3$-structure $(\Phi_{\alpha},\xi_{\alpha},\eta_{\alpha})$, $\alpha = 1,2,3$, a Riemannian metric $g$ is said to be compatible with the almost contact $3$-structure if 
\begin{equation}
g(\xi_{\alpha},X) = \eta_{\alpha}(X), \ \ \ g(\Phi_{\alpha}(X),\Phi_{\alpha}(Y)) = g(X,Y) - \eta_{\alpha}(X)\eta_{\alpha}(Y), \ \ \ \ (\alpha = 1,2,3)
\end{equation}
for any vector fields $X$ and $Y$. An almost contact 3-structure with a compatible Riemannian metric is called {\textit{almost contact metric 3-structure}}. 

As in the case of almost contact structures, the following theorem ensures the existence of associated Riemannian metrics on almost 3-contact manifolds.

\begin{theorem}[Kuo, \cite{Kuo}]
Every manifold with an almost contact $3$-structure admits a compatible Riemannian metric.
\end{theorem}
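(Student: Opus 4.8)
The plan is to manufacture the metric from the natural splitting of $TM$ determined by the $3$-structure: a rank-$3$ ``vertical'' distribution spanned by the Reeb fields, and a ``horizontal'' complement carrying an almost hypercomplex structure. I would build $g$ separately on the two pieces, forcing them to be orthogonal, and then verify the two compatibility identities by decomposing an arbitrary vector. This is really the single-structure averaging argument of \cite{Sasaki} carried out on all three structures simultaneously.

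First I would record the elementary consequences of the axioms: $\eta_\alpha(\xi_\beta)=\delta_{\alpha\beta}$ (from $\eta_\alpha(\xi_\alpha)=1$ and $\eta_\alpha(\xi_\beta)=\eta_\beta(\xi_\alpha)=0$), and $\Phi_\alpha\xi_\alpha=0$, $\eta_\alpha\circ\Phi_\alpha=0$, valid for any almost contact structure. The first shows $\xi_1,\xi_2,\xi_3$ are pointwise independent, so $V:=\operatorname{span}(\xi_1,\xi_2,\xi_3)$ is a rank-$3$ distribution; with $\mathcal H:=\bigcap_{\alpha}\ker\eta_\alpha$, the map $v\mapsto(\eta_1(v),\eta_2(v),\eta_3(v))$ is a fiberwise isomorphism on $V$, hence $V\cap\mathcal H=0$ and $TM=V\oplus\mathcal H$, the projection onto $V$ being $X\mapsto\sum_\alpha\eta_\alpha(X)\xi_\alpha$. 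Next I would check that each $\Phi_\alpha$ preserves this splitting: it sends every $\xi_\beta$ to $0$ or to $\pm\xi_\gamma$, so $\Phi_\alpha V\subseteq V$; and for $X\in\mathcal H$ the relations $\eta_\alpha\circ\Phi_\beta=\pm\eta_\gamma$ together with $\eta_\alpha\circ\Phi_\alpha=0$ give $\Phi_\alpha\mathcal H\subseteq\mathcal H$. Restricting the defining relation $(\ref{R1})$ to $\mathcal H$, where the $\eta_\beta\otimes\xi_\alpha$ terms vanish, shows that $(\Phi_1,\Phi_2,\Phi_3)|_{\mathcal H}$ is an almost hypercomplex structure: $\Phi_\alpha^{2}=-\operatorname{Id}$ and $\Phi_\alpha\Phi_\beta=\Phi_\gamma$ on $\mathcal H$.

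With the splitting in hand I would fix any background Riemannian metric $h$ on $M$ (which exists trivially) and define $g$ by three prescriptions: the $\xi_\alpha$ are declared $g$-orthonormal; $V$ is declared $g$-orthogonal to $\mathcal H$; and on $\mathcal H$ one sets $g(X,Y):=\tfrac14\bigl(h(X,Y)+\sum_{\alpha=1}^{3}h(\Phi_\alpha X,\Phi_\alpha Y)\bigr)$. This is positive definite, being an average of pullbacks of $h$ by bundle automorphisms of $\mathcal H$, and—because applying any $\Phi_\alpha$ permutes the quadruple $\{X,\Phi_1X,\Phi_2X,\Phi_3X\}$ up to sign, by the hypercomplex relations—it satisfies $g(\Phi_\alpha X,\Phi_\alpha Y)=g(X,Y)$ for all $X,Y\in\mathcal H$. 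To finish, I would write a general $X=X_V+X_{\mathcal H}$ with $X_V=\sum_\alpha\eta_\alpha(X)\xi_\alpha$ and check the identities componentwise: $g(\xi_\alpha,X)=\eta_\alpha(X)$ is immediate from the first prescription, and since $\Phi_\alpha$ respects the orthogonal decomposition, $g(\Phi_\alpha X,\Phi_\alpha Y)$ splits into a horizontal term handled by the averaging and a vertical term which, using $\Phi_\alpha\xi_\alpha=0$, $\Phi_\alpha\xi_\beta=\pm\xi_\gamma$ and orthonormality, equals $g(X_V,Y_V)-\eta_\alpha(X)\eta_\alpha(Y)$; adding the two gives $g(\Phi_\alpha X,\Phi_\alpha Y)=g(X,Y)-\eta_\alpha(X)\eta_\alpha(Y)$.

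The sign-bookkeeping in the quaternionic relations on $\mathcal H$ is the only place that genuinely has to be pinned down, and the substantive point is that the symmetrized metric on $\mathcal H$ is simultaneously $\Phi_\alpha$-invariant for all three $\alpha$—which works precisely because the $\Phi_\alpha|_{\mathcal H}$ generate a quaternion action. If one preferred, one could instead pass to the almost hypercomplex manifold $M\times\mathbbm{R}$ of Remark \ref{hypercomplex}, average $h\oplus dt^{2}$ over the quaternion group to obtain a hyperhermitian metric, and restrict to $M$; but the compatibility with the $\eta_\alpha$ would still have to be installed through the same vertical/horizontal split, so I expect no real obstacle beyond organizing these checks cleanly.
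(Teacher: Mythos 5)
The paper does not prove this statement; it is imported verbatim from Kuo's paper \cite{Kuo}, so there is no internal proof to compare against. Your argument is correct and is essentially Kuo's own construction (the $3$-structure analogue of the Sasaki--Hatakeyama averaging trick for a single almost contact structure): the assertions you leave implicit do check out, e.g.\ $\eta_{\alpha}(\xi_{\beta})=\delta_{\alpha\beta}$ follows from $\eta_{\gamma}=\eta_{\alpha}\circ\Phi_{\beta}$ together with $\Phi_{\beta}\xi_{\beta}=0$, the relations $\eta_{\beta}\circ\Phi_{\alpha}=\pm\eta_{\gamma}$ give $\Phi_{\alpha}\mathcal{H}\subseteq\mathcal{H}$, and on $\mathcal{H}$ the restricted relations $\Phi_{\alpha}^{2}=-\operatorname{Id}$, $\Phi_{\beta}\Phi_{\alpha}=-\Phi_{\alpha}\Phi_{\beta}=\pm\Phi_{\gamma}$ imply that left composition with any $\Phi_{\alpha}$ permutes $\{\operatorname{Id},\Phi_{1},\Phi_{2},\Phi_{3}\}$ up to signs which cancel because they occur in both slots of $h$, so the four-term average is simultaneously $\Phi_{1},\Phi_{2},\Phi_{3}$-invariant. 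The final componentwise verification of $g(\Phi_{\alpha}X,\Phi_{\alpha}Y)=g(X,Y)-\eta_{\alpha}(X)\eta_{\alpha}(Y)$, using $\Phi_{\alpha}\xi_{\beta}=\pm\xi_{\gamma}$ and orthonormality of the Reeb fields on the vertical part, is exactly right, so your proof is complete.
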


\begin{remark}
Let $(\Phi_{\alpha},\xi_{\alpha},\eta_{\alpha})$, $\alpha = 1,2,3$, be an almost contact $3$-structure on a smooth manifold $M$, and consider $g_{M}$ as being the compatible Riemannian metric provided by the last theorem. As we have seen, an almost contact $3$-structure induces an almost hypercomplex structure $(\mathbbm{I}_{1},\mathbbm{I}_{2},\mathbbm{I}_{3})$ on the manifold on $\mathscr{C}(M) := M \times \mathbbm{R}$. By considering $g_{M}$, we can define a Riemannian metric $g_{\mathscr{C}}$ on $\mathscr{C}(M)$ such that 
\begin{equation}
\label{conemetric}
g_{\mathscr{C}} := g_{M} + dt \otimes dt.
\end{equation}
From the above metric, we have that $(g_{\mathscr{C}},\mathbbm{I}_{\alpha})$ defines an almost Hermitian structure on $\mathscr{C}(M)$, for $\alpha = 1,2,3$. Thus, in this case, we have that $(\mathscr{C}(M), g_{\mathscr{C}}, \mathbbm{I}_{1},\mathbbm{I}_{2},\mathbbm{I}_{3})$ is an {\textit{almost hyperhermitian manifold}}.
\end{remark}

\subsection{Contact metric 3-structures} A {\textit{contact metric 3-structure}} on a smooth manifold $M$ is defined by a set of three contact structures $\eta_{\alpha}$, $\alpha = 1,2,3$, with same associated metric $g$, such that the underlying almost contact structures $(\Phi_{\alpha},\xi_{\alpha},\eta_{\alpha})$, $\alpha = 1,2,3$, satisfy Eq. \ref{R1}. Notice that, in this case, the structure tensors $(\Phi_{\alpha},\xi_{\alpha},\eta_{\alpha})$, $\alpha = 1,2,3$, in fact define an almost contact 3-structure, see \cite[Theorem 2]{Kuo}. A contact metric $3$-structure is said to be {\textit{3-Sasaki}} if each one of the underlying almost contact structures $(\Phi_{\alpha},\xi_{\alpha},\eta_{\alpha})$, $\alpha = 1,2,3$, is a normal almost contact structure. A manifold which admits a 3-Sasaki structure is called {\textit{3-Sasakian manifold}}. The following result will be important for us later.

\begin{theorem}[Kuo, \cite{Kuo}] 
\label{2sasaki}
If, for an almost contact metric 3-structure $(g_{M},\Phi_{\alpha},\xi_{\alpha},\eta_{\alpha})$, $\alpha = 1,2,3$, any two of almost contact metric structures $(g_{M},\Phi_{\alpha},\xi_{\alpha},\eta_{\alpha})$ are Sasaki, then the third is so.
\end{theorem}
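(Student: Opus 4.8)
\emph{Proof strategy.} By the symmetry of the hypothesis we may assume that $(g_{M},\Phi_{\alpha},\xi_{\alpha},\eta_{\alpha})$ is Sasaki for $\alpha=1,2$, and we must show the same for $\alpha=3$. Being Sasaki means three things: $(\Phi_{3},\xi_{3},\eta_{3})$ is a \emph{normal} almost contact structure; $\eta_{3}$ is a \emph{contact} form; and $g_{M}$ is an \emph{associated} metric for $\eta_{3}$. This last point, since $\eta_{3}(X)=g_{M}(X,\xi_{3})$ is already part of the almost contact metric $3$-structure and $\Phi_{3}$ already satisfies $\Phi_{3}\circ\Phi_{3}=-{\rm{Id}}+\eta_{3}\otimes\xi_{3}$, reduces to the fundamental identity $d\eta_{3}=g_{M}(\Phi_{3}\otimes{\rm{Id}})$. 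I would verify these in turn. Normality of $(\Phi_{3},\xi_{3},\eta_{3})$ is free: since $(\Phi_{1},\xi_{1},\eta_{1})$ and $(\Phi_{2},\xi_{2},\eta_{2})$ are Sasaki they are in particular normal, so Theorem \ref{3normality} (Yano--Ishihara--Konishi) yields at once the normality of the third almost contact structure.

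The core of the proof is the identity $d\eta_{3}=g_{M}(\Phi_{3}\otimes{\rm{Id}})$, and I would establish it by a direct computation with the Levi-Civita connection $\nabla$ of $g_{M}$. Since for $\alpha=1,2$ the structure $(g_{M},\Phi_{\alpha},\xi_{\alpha},\eta_{\alpha})$ is Sasaki, it obeys the standard structure equations of a Sasakian manifold (see, e.g., \cite{Blair}), in which $\nabla_{X}\xi_{\alpha}$ is a fixed multiple of $\Phi_{\alpha}X$ together with the companion formula for $\nabla_{X}\Phi_{\alpha}$. Differentiating the relation $\Phi_{3}=\Phi_{1}\circ\Phi_{2}-\eta_{2}\otimes\xi_{1}$ of Eq. \ref{R1} by the Leibniz rule, and simplifying the result with the algebraic identities of the almost contact $3$-structure --- in particular $\Phi_{1}\xi_{2}=\xi_{3}$, $\eta_{1}\circ\Phi_{2}=\eta_{3}$, $\Phi_{1}\circ\Phi_{1}=-{\rm{Id}}+\eta_{1}\otimes\xi_{1}$, and the skew-symmetry $g_{M}(\Phi_{\alpha}X,Y)=-g_{M}(X,\Phi_{\alpha}Y)$ that follows from the compatibility of $g_{M}$ --- one checks that all cross terms cancel and $\nabla_{X}\xi_{3}$ acquires exactly the form demanded by a Sasakian structure, equivalently that $(\nabla_{X}\Phi_{3})Y=g_{M}(X,Y)\xi_{3}-\eta_{3}(Y)X$, with the signs and normalizations already fixed by the cases $\alpha=1,2$. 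Antisymmetrizing $\nabla\xi_{3}$ then yields $d\eta_{3}=g_{M}(\Phi_{3}\otimes{\rm{Id}})$. Finally, the compatibility of the $3$-structure makes $\Phi_{3}$ restrict on $\ker\eta_{3}$ to a $g_{M}$-orthogonal almost complex structure, so $g_{M}(\Phi_{3}\otimes{\rm{Id}})$ is nondegenerate on $\ker\eta_{3}$; combined with $\eta_{3}(\xi_{3})=1$ this gives $\eta_{3}\wedge(d\eta_{3})^{n}\neq 0$, so $\eta_{3}$ is a contact form and $g_{M}$ is an associated metric. Together with the first step, $(g_{M},\Phi_{3},\xi_{3},\eta_{3})$ is Sasaki.

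The main obstacle is the bookkeeping in the computation of $\nabla(\Phi_{1}\circ\Phi_{2}-\eta_{2}\otimes\xi_{1})$: one must check that \emph{every} extra term produced by $(\nabla\Phi_{1})\circ\Phi_{2}$, $\Phi_{1}\circ(\nabla\Phi_{2})$, $(\nabla\eta_{2})\otimes\xi_{1}$ and $\eta_{2}\otimes(\nabla\xi_{1})$ cancels, which forces the simultaneous use of several of the almost contact $3$-structure relations of Eq. \ref{R1} and of the skew-symmetry of $g_{M}(\Phi_{\alpha}\,\cdot\,,\cdot\,)$, all while keeping the conventions for the exterior derivative and for the Sasakian structure equations consistent. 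For the record, the conceptual reason behind the statement is the hyperk\"{a}hler ``two-out-of-three'' phenomenon: on the Riemannian cone $(M\times\mathbbm{R}_{>0},\,dr^{2}+r^{2}g_{M})$ the structure $(g_{M},\Phi_{\alpha},\xi_{\alpha},\eta_{\alpha})$ is Sasaki precisely when the induced almost complex structure $\mathbbm{I}_{\alpha}$ is K\"{a}hler; if $\mathbbm{I}_{1}$ and $\mathbbm{I}_{2}$ are parallel for the cone metric, so is $\mathbbm{I}_{3}=\mathbbm{I}_{1}\circ\mathbbm{I}_{2}$, hence K\"{a}hler --- which mirrors, at the level of the metric cone, precisely the hypercomplex statement of Theorem \ref{3normality} at the level of the cylinder $M\times\mathbbm{R}$.
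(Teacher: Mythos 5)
The paper does not actually prove this statement: it is quoted verbatim from Kuo \cite{Kuo} and used as a black box (only Theorems \ref{3normality} and \ref{Kashiwada} get even a sketch, in Remark \ref{Hyperkahler3contact}). So there is no internal proof to compare with, and your proposal should be judged on its own; it is correct and essentially complete. The key cancellation you predict does occur: with the Sasakian structure equations $\nabla_X\xi_\alpha=-\Phi_\alpha X$ and $(\nabla_X\Phi_\alpha)Y=g_M(X,Y)\xi_\alpha-\eta_\alpha(Y)X$ for $\alpha=1,2$, differentiating $\Phi_3=\Phi_1\circ\Phi_2-\eta_2\otimes\xi_1$ and using $\eta_1\circ\Phi_2=\eta_3$, $\Phi_1\xi_2=\xi_3$, $(\nabla_X\eta_2)(Y)=g_M(\nabla_X\xi_2,Y)$ and the skew-symmetry of $g_M(\Phi_2\,\cdot\,,\cdot\,)$, the terms $g_M(X,\Phi_2Y)\xi_1$ and $\eta_2(Y)\Phi_1X$ each appear twice with opposite signs, leaving
\begin{equation*}
(\nabla_X\Phi_3)Y=g_M(X,Y)\xi_3-\eta_3(Y)X .
\end{equation*}
Two remarks. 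First, once this identity is in hand you are already finished: for an almost contact metric structure it characterizes Sasakian structures (Blair \cite{Blair}), so the separate appeal to Theorem \ref{3normality} for normality, and the separate verifications of $d\eta_3=g_M(\Phi_3\otimes{\rm{Id}})$ and of $\eta_3\wedge(d\eta_3)^n\neq 0$, are convenient but logically redundant; alternatively, if you prefer to argue through the contact metric condition, the quicker route is to differentiate $\xi_3=\Phi_1(\xi_2)$, which gives $\nabla_X\xi_3=-\Phi_3X$ in two lines and then $d\eta_3$ by antisymmetrization. Second, your closing ``two-out-of-three on the cone'' heuristic is exactly the mechanism the paper invokes for the stronger Theorem \ref{Kashiwada} in Remark \ref{Hyperkahler3contact}, so it is consistent with how the neighbouring results are treated. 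The only point needing care is the one you flag yourself: the sign conventions for $d\eta$ and for $d\eta_\alpha=g_M(\Phi_\alpha\otimes{\rm{Id}})$ must be fixed consistently with the structure equations used for $\alpha=1,2$, since otherwise the final identity comes out with the opposite sign; this is a bookkeeping issue, not a gap.
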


A remarkable result in the setting of contact metric 3-structures is given by the following theorem:

\begin{theorem}[Kashiwada, \cite{Kashiwada}] 
\label{Kashiwada}
A contact metric 3-structure is necessarily a 3-Sasaki structure.
\end{theorem}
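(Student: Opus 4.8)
The plan is to pass to the Riemannian cone and invoke an integrability lemma of Hitchin type for almost hyperhermitian structures with closed fundamental forms. Let $(g,\Phi_{\alpha},\xi_{\alpha},\eta_{\alpha})$, $\alpha = 1,2,3$, be a contact metric $3$-structure on $M$ (so $\dim_{\mathbbm{R}}M = 4n+3$, and, by \cite[Theorem 2]{Kuo}, the underlying tensors form an almost contact metric $3$-structure), and equip $C(M) := M \times \mathbbm{R}_{>0}$ with the cone metric $\overline{g} := dr^{2} + r^{2}g$. For each individual contact metric structure I would use the classical cone dictionary: since $d\eta_{\alpha} = g(\Phi_{\alpha}\otimes{\rm{Id}})$ and $\iota_{\xi_{\alpha}}d\eta_{\alpha} = 0$, up to a normalizing constant the exact $2$-form $\overline{\omega}_{\alpha} := d(r^{2}\eta_{\alpha})$ is the fundamental form of an almost Hermitian structure $(\overline{g},\mathbbm{I}_{\alpha})$ on $C(M)$, where $\mathbbm{I}_{\alpha}$ restricts to $\Phi_{\alpha}$ on $\ker\eta_{\alpha}$ and exchanges $\xi_{\alpha}$ with the Euler vector field $r\partial_{r}$ (compatibility of $\overline{g}$ with $\mathbbm{I}_{\alpha}$ being inherited from that of $g$ with $(\Phi_{\alpha},\xi_{\alpha},\eta_{\alpha})$); moreover $(\Phi_{\alpha},\xi_{\alpha},\eta_{\alpha})$ is normal, i.e.\ $(g,\Phi_{\alpha},\xi_{\alpha},\eta_{\alpha})$ is Sasaki, if and only if $\mathbbm{I}_{\alpha}$ is integrable, i.e.\ $(C(M),\overline{g},\mathbbm{I}_{\alpha})$ is K\"{a}hler --- the cone counterpart of Eq.\ \ref{Normalintegrable}, cf.\ \cite{Blair}, \cite{Sasakigeometry}. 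Since each $\overline{\omega}_{\alpha}$ is exact, $(C(M),\overline{g},\mathbbm{I}_{\alpha})$ is almost K\"{a}hler for $\alpha = 1,2,3$.

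Next I would check that $(\mathbbm{I}_{1},\mathbbm{I}_{2},\mathbbm{I}_{3})$ is an almost hypercomplex structure on $C(M)$, i.e.\ $\mathbbm{I}_{\alpha}\circ\mathbbm{I}_{\beta} = -\mathbbm{I}_{\beta}\circ\mathbbm{I}_{\alpha} = \mathbbm{I}_{\gamma}$ for every cyclic permutation $(\alpha,\beta,\gamma)$ of $(1,2,3)$. This is purely algebraic: it follows from Eq.\ \ref{R1} together with the induced relations $\Phi_{\alpha}(\xi_{\beta}) = -\Phi_{\beta}(\xi_{\alpha}) = \xi_{\gamma}$, $\eta_{\alpha}\circ\Phi_{\beta} = -\eta_{\beta}\circ\Phi_{\alpha} = \eta_{\gamma}$, $\eta_{\alpha}(\xi_{\beta}) = 0$, verified separately on $\ker\eta_{\alpha}\cap\ker\eta_{\beta}$, on the Reeb fields, and on $r\partial_{r}$; it is the cone version of Remark \ref{hypercomplex}. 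Together with the previous paragraph this realizes $(C(M),\overline{g},\mathbbm{I}_{1},\mathbbm{I}_{2},\mathbbm{I}_{3})$ as an almost hyperhermitian manifold whose three fundamental $2$-forms $\overline{\omega}_{1},\overline{\omega}_{2},\overline{\omega}_{3}$ are all \emph{closed}.

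The heart of the argument is then the integrability lemma: \emph{an almost hyperhermitian manifold whose three fundamental $2$-forms are closed is hyperk\"{a}hler}. I would prove it as follows. Set $\Upsilon := \overline{\omega}_{2} + \sqrt{-1}\,\overline{\omega}_{3}$. Using $\overline{\omega}_{\alpha} = \overline{g}(\mathbbm{I}_{\alpha}\otimes{\rm{Id}})$ together with $\mathbbm{I}_{2}\circ\mathbbm{I}_{1} = -\mathbbm{I}_{3}$ and $\mathbbm{I}_{3}\circ\mathbbm{I}_{1} = \mathbbm{I}_{2}$, one computes $\Upsilon(\mathbbm{I}_{1}X,Y) = \sqrt{-1}\,\Upsilon(X,Y)$ for all $X,Y$, so $\Upsilon$ is of type $(2,0)$ with respect to $\mathbbm{I}_{1}$; it is non-degenerate on the $(1,0)$-bundle because the $\overline{\omega}_{\alpha}$ come from a hyperhermitian structure. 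Since $d\Upsilon = d\overline{\omega}_{2} + \sqrt{-1}\,d\overline{\omega}_{3} = 0$, for any $(1,0)$-vector $Z$ and $(0,1)$-vectors $\overline{X},\overline{Y}$ the Cartan formula for $d\Upsilon(Z,\overline{X},\overline{Y})$ reduces --- all other terms involving $\Upsilon$ vanishing by bidegree --- to $-\Upsilon\big([\overline{X},\overline{Y}]^{1,0},Z\big) = 0$, where $[\cdot,\cdot]^{1,0}$ is the $(1,0)$-component of the Lie bracket; by non-degeneracy of $\Upsilon$ this forces $[\overline{X},\overline{Y}]^{1,0} = 0$, i.e.\ $T^{0,1}$ is involutive and $\mathbbm{I}_{1}$ is integrable. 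By the cyclic symmetry $\mathbbm{I}_{2}$ and $\mathbbm{I}_{3}$ are integrable too; being integrable with a closed fundamental form, each $(\overline{g},\mathbbm{I}_{\alpha})$ is K\"{a}hler, so $\overline{\nabla}\mathbbm{I}_{\alpha} = 0$ for the Levi-Civita connection $\overline{\nabla}$ of $\overline{g}$, and $(C(M),\overline{g},\mathbbm{I}_{1},\mathbbm{I}_{2},\mathbbm{I}_{3})$ is hyperk\"{a}hler.

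Finally, $\overline{\nabla}\mathbbm{I}_{\alpha} = 0$ says that $(C(M),\overline{g})$ is the K\"{a}hler cone of $(g,\Phi_{\alpha},\xi_{\alpha},\eta_{\alpha})$, so by the dictionary of the first paragraph $(\Phi_{\alpha},\xi_{\alpha},\eta_{\alpha})$ is normal for $\alpha = 1,2,3$; together with Eq.\ \ref{R1}, which the triple already satisfies, this is exactly the statement that $(g,\Phi_{\alpha},\xi_{\alpha},\eta_{\alpha})$, $\alpha = 1,2,3$, is a $3$-Sasaki structure. (Alternatively, one may establish normality for just two indices and then apply Theorem \ref{2sasaki}.) I expect the main obstacle to be the integrability lemma of the third paragraph --- the clean extraction of $[\overline{X},\overline{Y}]^{1,0} = 0$ from $d\Upsilon = 0$ --- which is precisely where Kashiwada's computational content sits; a cone-free variant would instead derive directly from Eq.\ \ref{R1} and the contact metric identities that the operators $h_{\alpha} := \tfrac{1}{2}\mathscr{L}_{\xi_{\alpha}}\Phi_{\alpha}$ vanish and that the relevant Nijenhuis tensors vanish, but that route is substantially longer and less transparent.
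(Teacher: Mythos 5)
Your proposal is correct and follows essentially the same route as the paper: the paper's argument (sketched in Remark \ref{Hyperkahler3contact}) also passes to the cone $\mathscr{C}(M)=M\times\mathbbm{R}$ with the conformally rescaled metric ${\rm{e}}^{t}(g_{M}+dt\otimes dt)$, observes that the three fundamental forms become the exact forms $d({\rm{e}}^{t}\eta_{\alpha})$, and invokes Hitchin's lemma (an almost hyperhermitian structure with closed fundamental forms is hyperk\"{a}hler) to get integrability of the $\mathbbm{I}_{\alpha}$ and hence normality of each $(\Phi_{\alpha},\xi_{\alpha},\eta_{\alpha})$. The only difference is cosmetic: you work with the metric cone $dr^{2}+r^{2}g$ instead of the cylinder picture, and you supply an inline proof of the Hitchin-type integrability lemma (the $(2,0)$-form $\Upsilon$ argument), where the paper simply cites \cite[Lemma 6.8]{Hitchin}.
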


\begin{remark}
\label{Hyperkahler3contact}
In order to fix some notations to be used latter, let us sketch the proof of the last theorem. Let $M$ be a manifold with a contact metric 3-structure $\eta_{\alpha}$, $\alpha = 1,2,3$, and associated metric $g_{M}$. Considering the underlying almost contact 3-structure $(\Phi_{\alpha},\xi_{\alpha},\eta_{\alpha})$, $\alpha = 1,2,3$, by definition of Sasakian manifolds, it is enough to show that $(\Phi_{\alpha},\xi_{\alpha},\eta_{\alpha})$ is normal, for $\alpha = 1,2,3$. From Remark \ref{hypercomplex}, we have an almost hypercomplex structure $(\mathbbm{I}_{1},\mathbbm{I}_{2},\mathbbm{I}_{3})$ on $\mathscr{C}(M) = M \times \mathbbm{R}$. Further, we can define a Riemannian metric metric $g_{\mathscr{C}}$ on $\mathscr{C}(M)$, where $g_{\mathscr{C}}$ is given as in Eq. \ref{conemetric}. From these, we have that $(g_{\mathscr{C}}, \mathbbm{I}_{1},\mathbbm{I}_{2},\mathbbm{I}_{3})$  defines an almost hyperhermitian structure on $\mathscr{C}(M)$. Since $g_{M}$ is also compatible with the almost contact structures $(\Phi_{\alpha},\xi_{\alpha},\eta_{\alpha})$, $\alpha = 1,2,3$, see Eq. \ref{compatible}, it follows that $\Theta_{\alpha} := g_{\mathscr{C}}(\mathbbm{I}_{\alpha} \otimes {\rm{Id}}) = d\eta_{\alpha} + dt \wedge \eta_{\alpha}$, thus
\begin{equation}
d\Theta_{\alpha} = \Theta_{\alpha} \wedge dt, \ \ \ \ \ \ \ \ \ (\alpha = 1,2,3).
\end{equation}
Therefore, by setting $g_{{\text{HK}}} = {\rm{e}}^{t}g_{\mathscr{C}}$, we have that $(g_{{\text{HK}}},\mathbbm{I}_{1},\mathbbm{I}_{2},\mathbbm{I}_{3})$ also defines an almost hyperhermitian structure on $\mathscr{C}(M)$. Moreover, since 
\begin{equation}
\omega_{\alpha} := g_{{\text{HK}}}(\mathbbm{I}_{\alpha} \otimes {\rm{Id}}) = {\rm{e}}^{t}\Theta_{\alpha} = d({\rm{e}}^{t} \eta_{\alpha}), \ \ \ \ \ \ \ \ \ (\alpha = 1,2,3)
\end{equation}
it follows from \cite[Lemma 6.8]{Hitchin} that $(g_{{\text{HK}}},\mathbbm{I}_{1},\mathbbm{I}_{2},\mathbbm{I}_{3})$ is in fact a \textit{hyperk\"{a}hler structure} on $\mathscr{C}(M)$. In particular, we have that $\mathbbm{I}_{\alpha}$, $\alpha = 1,2,3$, are integrable, which in turn implies that $(\Phi_{\alpha},\xi_{\alpha},\eta_{\alpha})$, $\alpha = 1,2,3$, are normal. 
\end{remark}

Now, we consider the following result:

\begin{theorem}[Kashiwada, \cite{Kashiwada2}; cf. Boyer, Galicki, \cite{3Einstein}]
\label{Kashiwada2}
Let $M$ be a smooth manifold with a 3-Sasakian structure $(g_{M},\Phi_{\alpha},\xi_{\alpha},\eta_{\alpha})$, $\alpha = 1,2,3$. Then $(M,g_{M})$ is an Einstein space with positive scalar curvature.
\end{theorem}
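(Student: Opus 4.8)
The plan is to pass to the metric cone over $M$ and exploit the fact that it is hyperk\"{a}hler, hence Ricci-flat. First I would note that a $3$-Sasakian structure is, by the definitions recalled above, in particular a contact metric $3$-structure (with all three underlying almost contact structures normal, which by Theorem \ref{Kashiwada} is in any case automatic). Hence the construction of Remark \ref{Hyperkahler3contact} applies directly: on $\mathscr{C}(M) = M \times \mathbbm{R}$ the metric $g_{{\text{HK}}} = {\rm{e}}^{t}(g_{M} + dt \otimes dt)$, together with the almost complex structures $\mathbbm{I}_{1},\mathbbm{I}_{2},\mathbbm{I}_{3}$ of Remark \ref{hypercomplex}, forms a hyperk\"{a}hler structure, with K\"{a}hler forms $\omega_{\alpha} = g_{{\text{HK}}}(\mathbbm{I}_{\alpha}\otimes{\rm{Id}}) = d({\rm{e}}^{t}\eta_{\alpha})$, $\alpha = 1,2,3$.

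Next I would invoke the standard fact that a hyperk\"{a}hler metric is Ricci-flat (its restricted holonomy is contained in ${\rm{Sp}}(m) \subset {\rm{SU}}(2m)$, where $4m = \dim_{\mathbbm{R}}\mathscr{C}(M)$), so that ${\text{Ric}}_{g_{{\text{HK}}}} = 0$. On the other hand, after a change of the fibre coordinate $t$, the metric $g_{{\text{HK}}}$ becomes a metric cone $dr^{2} + r^{2}\,g'$ on $(0,\infty)\times M$, where $g'$ is a constant multiple of $g_{M}$. Feeding this into the classical warped-product formula for the Ricci tensor of a metric cone $dr^{2} + r^{2}h$ over $(M^{k},h)$ — namely ${\text{Ric}} = {\text{Ric}}_{h} - (k-1)h$ on directions tangent to $M$, and ${\text{Ric}}(\partial_{r},\cdot) = 0$ — the vanishing of ${\text{Ric}}_{g_{{\text{HK}}}}$ forces ${\text{Ric}}_{g'} = (k-1)g'$ with $k = \dim_{\mathbbm{R}}M = 2n+1$. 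Since the Ricci tensor is unchanged under constant rescalings of the metric, $(M,g_{M})$ is therefore Einstein, ${\text{Ric}}_{g_{M}} = \lambda g_{M}$, with $\lambda > 0$; hence ${\text{Scal}}_{g_{M}} = (2n+1)\lambda > 0$. (Evaluating ${\text{Ric}}_{g_{M}} = \lambda g_{M}$ on the unit Reeb field $\xi_{1}$ and using item $(2)$ of Remark \ref{SEandKE} pins down the Einstein constant as $\lambda = 2n$.)

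The main point that needs care is the middle step: correctly matching the reparametrization of $t$ with the normalization in the cone Ricci formula, so that the sign of the Einstein constant — and, if one wants it, its exact value — comes out right. Everything else rests on results already established in the excerpt (Remark \ref{Hyperkahler3contact}, Remark \ref{SEandKE}) together with the textbook fact that hyperk\"{a}hler metrics are Ricci-flat. A cone-free alternative, closer to Kashiwada's original argument, would instead combine the identities ${\text{Ric}}_{g_{M}}(\xi_{\alpha},\cdot) = 2n\,\eta_{\alpha}$, $\alpha = 1,2,3$, with the Sasakian formulas for $\nabla\Phi_{\alpha}$ and $\nabla\xi_{\alpha}$ to show directly that every tangent vector of $M$ is an eigenvector of the Ricci operator with eigenvalue $2n$; this bypasses the cone but requires a longer direct curvature computation, so I would favour the argument above.
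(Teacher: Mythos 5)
Your argument is essentially correct, but note that the paper itself does not prove Theorem \ref{Kashiwada2}: it is quoted from Kashiwada \cite{Kashiwada2} and Boyer--Galicki \cite{3Einstein}, and Kashiwada's original proof is precisely the direct curvature computation you mention only as an alternative. What you give is the Boyer--Galicki-style cone proof, and its skeleton is sound within the paper's framework: a 3-Sasakian structure is by definition a contact metric 3-structure with normal underlying structures, Remark \ref{Hyperkahler3contact} then equips $\mathscr{C}(M)=M\times\mathbbm{R}$ with a hyperk\"ahler metric, hyperk\"ahler metrics are Ricci-flat, and for a cone $dr^{2}+r^{2}h$ over $(M^{k},h)$ the formula $\mathrm{Ric}=\mathrm{Ric}_{h}-(k-1)h$ in the $M$-directions turns Ricci-flatness of the cone into $\mathrm{Ric}_{h}=(k-1)h$. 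Since the link metric you obtain is a positive constant multiple of $g_{M}$ and the Ricci tensor is scale-invariant, $(M,g_{M})$ is Einstein with positive Einstein constant, hence has positive scalar curvature; the exact constant is then best read off from Remark \ref{SEandKE}(2), as you do. This is a genuinely different route from the cited original (no cone, a longer tensor computation), and it buys a short conceptual proof at the price of importing the hyperk\"ahler-cone machinery already set up in Remark \ref{Hyperkahler3contact}.

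The normalization you flag should, however, be resolved rather than left as a caveat, because taken literally it produces an inconsistency. With $g_{{\rm{HK}}}={\rm{e}}^{t}(g_{M}+dt\otimes dt)$ as written in Remark \ref{Hyperkahler3contact}, the substitution $r=2{\rm{e}}^{t/2}$ gives the cone $dr^{2}+r^{2}\big(\tfrac{1}{4}g_{M}\big)$, so your middle step would output $\mathrm{Ric}_{g_{M}}=\tfrac{k-1}{4}\,g_{M}$ (with $k=\dim_{\mathbbm{R}}M$), contradicting $\mathrm{Ric}_{g_{M}}(\xi_{1},\xi_{1})=k-1$ from Remark \ref{SEandKE}(2). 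The convention-free statement is that the hyperk\"ahler cone metric over a 3-Sasakian $(M,g_{M})$ is $dr^{2}+r^{2}g_{M}$, i.e. ${\rm{e}}^{2t}(dt^{2}+g_{M})$ with $r={\rm{e}}^{t}$ (test case: the flat cone over the round $S^{3}$), so the factor in the remark is a normalization artifact of the $d\eta$/wedge conventions; with the correct normalization your computation yields $\lambda=k-1$ directly and is consistent with Remark \ref{SEandKE}. None of this affects the qualitative conclusion, since either way the cone is over a positive multiple of $g_{M}$ and positivity of $\lambda$ follows, but if you state the exact Einstein constant you must either trace this factor or, as in your parenthesis, derive it from Remark \ref{SEandKE}(2) after Einstein-ness is established. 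Finally, a 3-Sasakian manifold has dimension $4m+3$, so $k=2n+1$ is only a labelling of the odd dimension and plays no role in the argument.
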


Given a manifold $M$ with a 3-Sasaki structure $(g_{M},\Phi_{\alpha},\xi_{\alpha},\eta_{\alpha})$, $\alpha = 1,2,3$, if one supposes that $(\Phi_{1},\xi_{1},\eta_{1})$ is a regular Sasaki structure, it follows from Remark \ref{SEandKE} that, particularly, the transverse structure $(\mathscr{D} :=\ker(\eta_{1}), \Phi_{1}|_{\mathscr{D}_{1}},g^{T})$ pushes down to a K\"{a}hler-Einstein structure $(g,J)$ on the smooth manifold $N := M/\mathscr{F}_{\xi_{1}}$. Actually, we have even more:

\begin{theorem}[Ishihara \& Konishi, \cite{IshiharaKonishi1} ]
Let $M$ be a smooth manifold with a 3-Sasaki structure $(g_{M},\Phi_{\alpha},\xi_{\alpha},\eta_{\alpha})$,  $\alpha = 1,2,3$, and suppose that one of the Sasaki structures, say $(\Phi_{1},\xi_{1},\eta_{1})$, is a regular Sasaki structure. Then the leaf space $N:= M/\mathscr{F}_{\xi_{1}}$ admits a complex almost contact metric structure. Moreover, the complex almost contact metric structure on the orbit space is a K\"{a}hler-Einstein structure of positive scalar curvature. 
\end{theorem}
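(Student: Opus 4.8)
The plan is to reverse Kobayashi's construction in the presence of a regular unit $1$-direction, so that the locally defined structure tensors $(u_i,v_i,A_i,B_i,G_i,H_i)$ from Section~\ref{Cplxalmostsection} can be reconstructed on the leaf space $N := M/\mathscr{F}_{\xi_1}$ from the $3$-Sasakian data on $M$. Concretely, write $\mathscr{D} := \ker(\eta_1)$ and let $p\colon M \to N$ denote the quotient submersion by the regular foliation $\mathscr{F}_{\xi_1}$; by Remark~\ref{SEandKE} the transverse K\"{a}hler structure $(\mathscr{D},\Phi_1|_{\mathscr{D}},g^T)$ descends to a K\"{a}hler structure $(g,J)$ on $N$, and by Theorem~\ref{Kashiwada2} it is K\"{a}hler--Einstein with positive scalar curvature (this last point is immediate once the transverse metric is shown to be Einstein, which follows from the Sasaki--Einstein relations in Remark~\ref{SEandKE}). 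So the substantive content is the first claim: that $N$ carries a complex almost contact metric structure in the sense of Section~\ref{Cplxalmostsection}.

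First I would use the fact that on a $3$-Sasakian manifold the three Reeb fields $\xi_1,\xi_2,\xi_3$ span an integrable rank-$3$ distribution with $\mathfrak{su}(2)$ bracket relations (this is standard and follows from Eq.~\ref{R1} together with normality); hence $\xi_2,\xi_3$ are sections of $\mathscr{D}$ that are rotated into one another by $\Phi_1$. The pair $(\eta_2,\eta_3)$ restricted to $\mathscr{D}$ then furnishes, on each sufficiently small open set $U_i \subset N$ over which $p$ is trivial with a chosen local section, real $1$-forms $u_i := s_i^\ast\eta_2$, $v_i := s_i^\ast\eta_3$ on $N$; since $\xi_1$ acts on the $(\eta_2,\eta_3)$-plane by the adjoint $\mathrm{SO}(2)$-rotation, changing the local section $s_i$ by the flow of $\xi_1$ rotates $(u_i,v_i)$ by a matrix $\begin{pmatrix} a & -b\\ b & a\end{pmatrix}$ with $a^2+b^2=1$, which is exactly the cocycle condition~\ref{trabsitioncontact}. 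Similarly I would define $A_i := p_\ast(\xi_2)$, $B_i := p_\ast(\xi_3)$ along $s_i$ and set $G_i := -\,p_\ast \circ (\Phi_2|_{\mathscr{D}}) \circ (p_\ast|_{\mathscr{D}})^{-1}$, $H_i := -\,p_\ast \circ (\Phi_3|_{\mathscr{D}}) \circ (p_\ast|_{\mathscr{D}})^{-1}$, and $g_N := g$. One then checks that the algebraic identities~\ref{relation1}--\ref{relation3} on $N$ are precisely the restrictions to $\mathscr{D}$ of the almost-contact-$3$ identities~\ref{R1} (the ``$-{\rm Id} + u_i\otimes A_i + v_i\otimes B_i$'' on the right of~\ref{relation1} is the $\mathscr{D}$-component of $\Phi_2\circ\Phi_2 = -{\rm Id}+\eta_2\otimes\xi_2$ after discarding the $\xi_1$-line), while $v_i = u_i\circ \mathscr{J}$ and $H_i = G_i\circ\mathscr{J}$ reflect the identities $\eta_3 = \eta_2\circ\Phi_1$ and $\Phi_3 = \Phi_1\circ\Phi_2 - \eta_2\otimes\xi_1$ from Eq.~\ref{thirdalmostcontact}. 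Compatibility of $g_N$ with these tensors follows from compatibility of $g_M$ with $(\Phi_2,\xi_2,\eta_2)$ restricted to $\mathscr{D}$.

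Having produced the local data, I would verify that they in fact assemble into a complex contact structure (not merely an almost one): the twisted forms $\varpi_i := u_i - \sqrt{-1}\,v_i$ transform by a $\mathrm{U}(1)$-cocycle $t_{ij}$ by the rotation computation above, so they define a section $\theta$ of $\Omega^{1,0}_N \otimes E$ for the line bundle $E = \{t_{ij}\}$; the nondegeneracy $\theta\wedge(d\theta)^n \neq 0$ comes from the contact condition $\eta_2\wedge(d\eta_2)^{2n+1}\neq 0$ on $M$ pushed to the $2n$-dimensional transverse bundle (using that $d\eta_2$ restricted to $\mathscr{D}$ has rank $2n$ because its kernel there is spanned by $\xi_3$ modulo $\xi_1$). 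Integrability of $J$ on $N$ is inherited from the integrability of $\Phi_1$-normality (the transverse almost complex structure of a Sasakian manifold is always integrable). Finally, combining with the K\"{a}hler--Einstein conclusion already noted gives the full statement.

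The main obstacle, I expect, is not the algebra but the \emph{descent}: showing that the locally chosen sections $s_i$ can be organized so that all the tensors $(u_i,v_i,A_i,B_i,G_i,H_i)$ are genuinely well defined globally on $N$ with the stated transition rule, i.e.\ that the ambiguity in lifting from $N$ to $M$ is exactly a $\mathrm{U}(1)$ worth of rotation in the $(\xi_2,\xi_3)$-plane and nothing more. This requires knowing that the leaves of $\mathscr{F}_{\xi_1}$ are circles (or $\mathbb{R}$'s) on which $\xi_1$ acts by the $\mathfrak{su}(2)$-rotation with \emph{no holonomy} beyond that rotation --- a consequence of regularity plus the $3$-Sasakian structure equations --- and a careful bookkeeping of how $\Phi_2,\Phi_3$ and $\eta_2,\eta_3$ fail to be basic individually but transform into each other. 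Once that is pinned down, identities~\ref{relation1}--\ref{relation3} and the K\"{a}hler--Einstein conclusion fall out as above.
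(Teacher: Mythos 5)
This statement is quoted by the paper from Ishihara--Konishi \cite{IshiharaKonishi1} and is not proved in the text, so there is no in-paper proof to compare against; judged on its own, your sketch follows what is essentially the original argument, and it is exactly the inverse of the paper's own construction in Section \ref{Sec4}: there the author starts from local tensors $(u_{i},v_{i},A_{i},B_{i},G_{i},H_{i})$ on the base and glues their horizontal lifts with the rotation cocycle $\phi_{j}=\phi_{i}+\psi_{ij}\circ\pi_{Q}$ (Lemma on $\Psi_{i},\Xi_{i}$ and Eq. \ref{LocEq3}), whereas you run the dictionary backwards, pulling $(\eta_{2},\eta_{3},\xi_{2},\xi_{3},\Phi_{2},\Phi_{3})$ down along local sections and letting the $\xi_{1}$-flow supply the ${\rm SO}(2)$ cocycle \ref{trabsitioncontact}. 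The key descent point you flag is handled correctly in principle: since $\eta_{2}(\xi_{1})=\eta_{3}(\xi_{1})=0$ and $p_{\ast}\xi_{1}=0$, the $dt_{ij}$-terms produced by changing the section drop out, so the pullbacks transform purely by the rotation. Two caveats you should make explicit: (i) on a $3$-Sasakian manifold $[\xi_{1},\xi_{2}]=2\xi_{3}$ and $\mathscr{L}_{\xi_{1}}\eta_{2}=2\eta_{3}$, so the rotation angle is \emph{twice} the fiber displacement (harmless for the cocycle, but it matters for sign/period bookkeeping, and your choices of sign in $v_{i}$, $G_{i}$, $H_{i}$ must be adjusted to meet $v_{i}=u_{i}\circ\mathscr{J}$, $B_{i}=-\mathscr{J}A_{i}$, $H_{i}=G_{i}\circ\mathscr{J}$ --- compare Eq. \ref{LocEq3}, where at $\phi_{i}=0$ one has $\eta_{3}=-\pi_{Q}^{\ast}v_{i}$); (ii) your further claim that the local forms $\varpi_{i}=u_{i}-\sqrt{-1}v_{i}$ assemble into a genuinely \emph{holomorphic} contact structure needs more than the algebra you cite --- holomorphicity of the $\theta_{i}$ up to nonvanishing factors uses the normality of the structures, not just the contact condition --- but this is beyond what the stated theorem requires, since only a complex almost contact metric structure is asserted. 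The K\"ahler--Einstein conclusion via Theorem \ref{Kashiwada2} and Remark \ref{SEandKE} is exactly the intended route. With the sign/factor-of-two bookkeeping carried out, your proposal is a correct proof strategy.
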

\begin{remark}
In view of the result above, the main theorem of this work can be thought of as a converse construction which allows us to go from complex contact geometry to almost 3-contact metric geometry from complex contact. 
\end{remark}
\section{Proof of main results}
\label{Sec4}

In this section, we provide a complete proof for our main result and its corollaries. For the sake of simplicity, we shall restate the results presented in the introduction.

\subsection{Proof of Theorem \ref{T1}} Let $(Z, \mathscr{J},\theta)$ be a complex contact manifold, and consider its complex almost contact structure $(u_{i},v_{i},A_{i},B_{i},G_{i},H_{i})$ provided by Theorem \ref{IshiharaKonishi}. In what follows, we consider the notation introduced in Section \ref{Cplxalmostsection}. Also, we shall denote by $X^{\#}$ the horizontal lift relative to the IK-connection (Remark \ref{IKconnection}), for any vector field $X$ on $Z$. In order to prove the main theorem (Theorem \ref{T1}), we first prove two technical lemmas:
\begin{lemma-non}
\label{localtensors}
The locally defined tensor fields:
\begin{enumerate}
\item $\Psi_{i}(X) := \cos(\phi_{i})(G_{i}\pi_{Q \ast}(X))^{\#} + \sin(\phi_{i})(H_{i}\pi_{Q \ast}(X))^{\#}$, $\forall X \in \mathfrak{X}(\pi_{Q}^{-1}(U_{i}))$,

\item $ \Xi_{i} := \cos(\phi_{i})A_{i}^{\#} + \sin(\phi_{i})B_{i}^{\#} \in \mathfrak{X}(\pi_{Q}^{-1}(U_{i}))$,
\end{enumerate}
satisfy $\Psi_{i} = \Psi_{j}$, and $\Xi_{i} = \Xi_{j}$, on $\pi_{Q}^{-1}(U_{i} \cap U_{j}) \neq \emptyset$.
\end{lemma-non}

\begin{proof}
Given any $X \in \mathfrak{X}(\pi_{Q}^{-1}(U_{i} \cap U_{j}))$, from Eq. \ref{overlapendomorphism}, it follows that 
\begin{equation}
\label{Eqloc11}
\cos(\phi_{j})(G_{j}\pi_{Q \ast}(X))^{\#} = \cos(\phi_{j})\cos(\psi_{ij} \circ \pi_{Q})(G_{i}\pi_{Q \ast}(X))^{\#} - \cos(\phi_{j})\sin(\psi_{ij} \circ \pi_{Q})(H_{i}\pi_{Q \ast}(X))^{\#}, 
\end{equation}
and
\begin{equation}
\label{Eqloc112}
\sin(\phi_{j})(H_{j}\pi_{Q \ast}(X))^{\#} = \sin(\phi_{j})\sin(\psi_{ij} \circ \pi_{Q})(G_{i}\pi_{Q \ast}(X))^{\#} + \sin(\phi_{j})\cos(\psi_{ij} \circ \pi_{Q})(H_{i}\pi_{Q \ast}(X))^{\#}.
\end{equation}
Hence, summing Eq. \ref{Eqloc11} and Eq. \ref{Eqloc112}, and rearranging the result, we obtain
\begin{center}
$\Psi_{j}(X) = \cos\big (\phi_{j} - \psi_{ij} \circ \pi_{Q}\big )(G_{i}\pi_{Q \ast}(X))^{\#} + \sin\big (\phi_{j} - \psi_{ij} \circ \pi_{Q}\big )(H_{i}\pi_{Q \ast}(X))^{\#} $.
\end{center}
Now, since $\phi_{j} = \phi_{i} + \psi_{ij} \circ \pi_{Q} + 2\pi k$, 
on $ \pi_{Q}^{-1}(U_{i} \cap U_{j})$, with $k \in \mathbbm{Z}$ (see Remark \ref{transitionangle}), we have $\Psi_{i}(X) = \Psi_{j}(X)$, for all $X \in \mathfrak{X}(\pi_{Q}^{-1}(U_{i} \cap U_{j}))$. From Eq. \ref{transitionverctors}, and a similar computation as above, we can also verify that $\Xi_{i} = \Xi_{j}$, on $\pi_{Q}^{-1}(U_{i} \cap U_{j}) \neq \emptyset$. 
\end{proof}

The result above allows us to define a smooth $(1,1)$-tensor field $\Psi \in {\text{End}}(TQ(L))$, and a smooth vector field $\Xi \in \mathfrak{X}(Q(L))$, by gluing the local data $\{\Psi_{i}\}$ and $\{\Xi_{i}\}$. By considering the Kobayashi's contact structure $\eta \in \Omega^{1}(Q(L))$, from Eq. \ref{localcontact}, and the local description of $(\Psi,\Xi)$, we obtain
\begin{equation}
\label{firstrelation}
\Psi(\Xi) = 0, \ \ \ \eta(\Xi) = 1 \ \ \ {\text{and}} \ \ \ \eta \circ \Psi = 0.
\end{equation}
Now, consider the (normal) almost contact structure $(\Phi_{1},\xi_{1},\eta_{1})$ on $Q(L)$ provided by the IK-connection (Remark \ref{IKconnection}). Denoting $\eta_{2} := \eta$, and $\xi_{2} := \Xi$, we define $\Phi_{2} \in {\text{End}}(TQ(L))$, such that
\begin{equation}
\label{secondendomorphism}
\Phi_{2} := \Psi - \eta_{1} \otimes \Phi_{1}(\xi_{2}) - (\eta_{2} \circ \Phi_{1}) \otimes \xi_{1}.
\end{equation}
Notice that, since $v_{i} = u_{i} \circ \mathscr{J}$ and $B_{i} = - \mathscr{J}A_{i}$, from Eq. \ref{IKendomorphism}, locally, we have
\begin{equation}
\label{LocEq1}
\Phi_{1}(\xi_{2}) = \sin(\phi_{i})A_{i}^{\#} - \cos(\phi_{i})B_{i}^{\#} \ \ \ \ \text{and} \ \ \ \ \eta_{2} \circ \Phi_{1} = - \sin(\phi_{i})\pi_{Q}^{\ast}(u_{i}) + \cos(\phi_{i})\pi_{Q}^{\ast}(v_{i}).
\end{equation}
\begin{remark}
\label{firstproperties}
From the definition of $(\Phi_{\alpha},\xi_{\alpha},\eta_{\alpha})$, $\alpha = 1,2,$ and Eq. \ref{LocEq1}, the following properties can be easily verified:
\begin{equation}
\eta_{2}(\Phi_{1}(\xi_{2})) = \eta_{2}(\xi_{1}) = \eta_{1}(\xi_{2}) = 0
\end{equation}
\begin{equation}
\Phi_{2}(\xi_{1}) = -\Phi_{1}(\xi_{2}), \ \ \Phi_{2}(\xi_{2}) = 0, \ \ \Phi_{2}(\Phi_{1}(\xi_{2})) = \xi_{1}
\end{equation}
\end{remark}
Now, from the above facts, we have the following lemma:
\begin{lemma-non}
\label{Kobayashialmostcontact}
$(\Phi_{2},\xi_{2},\eta_{2})$ defines an almost contact structure on $Q(L)$. 
\end{lemma-non}

\begin{proof}
We already have seen that $\eta_{2}(\xi_{2}) = 1$, see Eq. \ref{firstrelation}. Thus we just need to verify that
\begin{center}
$\Phi_{2} \circ \Phi_{2} = - {\rm{Id}} + \eta_{2} \otimes \xi_{2}.$
\end{center}
In order to prove the above equation, consider the decomposition induced by the IK-connection on $TQ(L)$, i.e.,
\begin{equation}
TQ(L) = {\text{Hor}}(TQ(L) ) \oplus {\text{Vert}}(TQ(L)),
\end{equation}
such that ${\text{Hor}}(TQ(L) ) = \ker(\eta_{1})$. Given $X \in TQ(L)$, we have
\begin{equation}
X = X^{\text{hor}} + \eta_{1}(X)\xi_{1},
\end{equation}
where $X^{\text{hor}}  = X-\eta_{1}(X)\xi_{1}$. From the above decomposition, we obtain
\begin{center}
$(\Phi_{2} \circ \Phi_{2})(X) = (\Phi_{2} \circ \Phi_{2})( X^{\text{hor}} ) + \eta_{1}(X)(\Phi_{2} \circ \Phi_{2})(\xi_{1})$
\end{center}
Since $\eta_{2}(\xi_{1}) = 0$, $\Phi_{2}(\xi_{1}) = -\Phi_{1}(\xi_{2})$, and $\Phi_{2}(\Phi_{1}(\xi_{2})) = \xi_{1}$, we have $(\Phi_{2} \circ \Phi_{2})(\xi_{1}) = -\xi_{1} + \eta_{2}(\xi_{1})\xi_{2}$, so
\begin{equation}
\label{verticalexpression}
(\Phi_{2} \circ \Phi_{2})(X) = (\Phi_{2} \circ \Phi_{2})( X^{\text{hor}} ) -\eta_{1}(X)\xi_{1} + \eta_{1}(X)\eta_{2}(\xi_{1})\xi_{2}.
\end{equation}
Now, for $X^{\text{hor}} \in {\text{Hor}}(TQ(L) )$, we have
\begin{equation}
\label{LocEq2}
\Phi_{2}(X^{\text{hor}}) = \Psi(X^{\text{hor}}) - \eta_{2}(\Phi_{1}(X^{\text{hor}}))\xi_{1} \Longrightarrow (\Phi_{2} \circ \Phi_{2})(X^{\text{hor}}) = \Phi_{2}(\Psi(X^{\text{hor}})) - \eta_{2}(\Phi_{1}(X^{\text{hor}}))\Phi_{2}(\xi_{1}).
\end{equation}
Since $v_{i} \circ G_{i} = v_{i} \circ H_{i} = u_{i} \circ H_{i} = 0$, it follows that $\eta_{2}(\Phi_{1}(\Psi(X^{\text{hor}}))) = 0$, and from the definition of $\Psi$, we have $\eta_{1}(\Psi(Y)) = 0$. Hence, it follows that $\Phi_{2}(\Psi(X^{\text{hor}})) = (\Psi \circ \Psi)(X^{\text{hor}})$. Now, using that $\Phi_{2}(\xi_{1}) = -\Phi_{1}(\xi_{2})$, the last equality on the right-hand side of Eq. \ref{LocEq2} becomes
\begin{equation}
\label{fundamentalidentitie0}
(\Phi_{2} \circ \Phi_{2})(X^{\text{hor}}) = (\Psi \circ \Psi)(X^{\text{hor}}) + \eta_{2}(\Phi_{1}(X^{\text{hor}}))\Phi_{1}(\xi_{2}),
\end{equation}
Now, we claim that:
\begin{equation}
\label{fundamentalidentitie}
(\Psi \circ \Psi)(Y) = - Y + \eta_{2}(Y)\xi_{2} - (\eta_{2}(\Phi_{1}(Y)) \Phi_{1}(\xi_{2}),
\end{equation}
for any vector field $Y$ on $Q(L)$. In fact, by considering the local expression of $\Psi$, $\forall Y \in \mathfrak{X}(\pi_{Q}^{-1}(U_{i}))$, we have
\begin{center}
$(\Psi \circ \Psi)(Y) = \sin(\phi_{i})\cos(\phi_{i}) \Big ( (H_{i} \circ G_{i} + G_{i} \circ H_{i}) \pi_{Q \ast}(Y)\Big)^{\#} + \cos^{2}(\phi_{i})(G_{i}^{2}\pi_{Q \ast}(Y))^{\#} + \sin^{2}(\phi_{i})(H_{i}^{2}\pi_{Q \ast}(Y))^{\#}.$
\end{center}
Since $H_{i} \circ G_{i} + G_{i} \circ H_{i} = 0,$ and $H_{i} \circ H_{i} = G_{i} \circ G_{i}= - {\rm{Id}} + u_{i} \otimes A_{i} + v_{i} \otimes B_{i},$ we obtain 
\begin{center}
$(\Psi \circ \Psi)(Y)  = -Y + \pi_{Q}^{\ast}(u_{i})(Y)A_{i}^{\#} + \pi_{Q}^{\ast}(v_{i})(Y)B_{i}^{\#}.$
\end{center}
Now, using the local description of $\eta_{2}$, $\xi_{2}$, $\eta_{2} \circ \Phi_{1}$, and $\Phi_{1}(\xi_{2})$ (see Eq. \ref{LocEq1}), it follows that 
\begin{center}
$\eta_{2}(Y)\xi_{2} - (\eta_{2}(\Phi_{1}(Y)) \Phi_{1}(\xi_{2}) = \pi_{Q}^{\ast}(u_{i})(Y)A_{i}^{\#} + \pi_{Q}^{\ast}(v_{i})(Y)B_{i}^{\#}.$
\end{center}
Thus, Eq. \ref{fundamentalidentitie} holds for any vector field $Y$ on $Q(L)$. Hence, taking $Y = X^{\text{hor}}$ in Eq. \ref{fundamentalidentitie}, and replacing the result in Eq. \ref{fundamentalidentitie0}, we obtain
\begin{equation}
\label{horizontalexpression}
(\Phi_{2} \circ \Phi_{2})(X^{\text{hor}}) = - X^{\text{hor}} + \eta_{2}(X^{\text{hor}})\xi_{2}.
\end{equation}
From Eq. \ref{verticalexpression}, and Eq. \ref{horizontalexpression}, we conclude that $\Phi_{2} \circ \Phi_{2} = - {\rm{Id}} + \eta_{2} \otimes \xi_{2}$. Thus, $(\Phi_{2},\xi_{2},\eta_{2})$ defines an almost contact structure on $Q(L)$.
\end{proof}

Now, we can prove our main theorem:

\begin{theorem}
\label{Mainteorem}
Let $(Z, \mathscr{J},\theta)$ be a complex contact manifold of complex dimension $2n+1 \geq 3$. Then there exists a $\rm{U}(1)$-principal bundle $Q$ over $Z$ which admits an almost contact metric 3-structure $(g_{Q},\Phi_{\alpha},\xi_{\alpha},\eta_{\alpha})$, $\alpha = 1,2,3$, satisfying the following properties:
\begin{enumerate}
\item $(\Phi_{1},\xi_{1},\eta_{1})$ is a normal almost contact structure, such that $Z = Q/\mathscr{F}_{\xi_{1}}$, and $\mathscr{L}_{\xi_{1}}g_{Q} = 0$;
\item $\eta_{2}$ and $\eta_{3}$ are contact structures, such that $\eta_{2} \wedge (d\eta_{2})^{2n+1} = \eta_{3} \wedge (d\eta_{3})^{2n+1} \neq 0$;
\item $(g_{Q},\Phi_{\alpha},\xi_{\alpha},\eta_{\alpha})$ is a contact metric structure, for $\alpha = 2,3$.
\end{enumerate}
Moreover, both $Q$ and $(g_{Q},\Phi_{\alpha},\xi_{\alpha},\eta_{\alpha})$, $\alpha = 1,2,3$, can be constructed in a natural way from $Z$ and $\theta$. 
\end{theorem}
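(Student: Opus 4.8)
The plan is to take $Q$ to be the unit circle bundle $Q(L)$ of $L = E^{-1}$ equipped with a fixed Hermitian structure, and to assemble the required tensors from the three ingredients already in place: the Ishihara--Konishi complex almost contact metric data $(u_{i},v_{i},A_{i},B_{i},G_{i},H_{i})$ with associated Hermitian metric $g_{Z}$ (Theorem~\ref{IshiharaKonishi}, Eq.~\ref{compatiblemetric}); the IK-connection $\sqrt{-1}\eta_{1}$ with its normal almost contact structure $(\Phi_{1},\xi_{1},\eta_{1})$ (Remark~\ref{IKconnection}, Theorem~\ref{almostcircle}); and Kobayashi's contact form $\eta_{2} = \eta$ together with the global tensors $\Psi,\Xi$ of Lemma~\ref{localtensors}. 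Item~(1) is then almost immediate: $\xi_{1}$ is the fundamental vector field of the ${\rm{U}}(1)$-action, so $\mathscr{F}_{\xi_{1}}$ is exactly the fibration $Q \to Z$ and $Z = Q/\mathscr{F}_{\xi_{1}}$, while normality of $(\Phi_{1},\xi_{1},\eta_{1})$ is Theorem~\ref{almostcircle}; the identity $\mathscr{L}_{\xi_{1}}g_{Q} = 0$ I would postpone until $g_{Q}$ is defined, where it follows from ${\rm{U}}(1)$-invariance.

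Next I would set $\xi_{2} := \Xi$, $\eta_{2} := \eta$, and define $\Phi_{2}$ by Eq.~\ref{secondendomorphism}; by Lemma~\ref{Kobayashialmostcontact} this is an almost contact structure. The point is to verify the two Kuo identities Eq.~\ref{L1eq}--Eq.~\ref{L2eq} for the pair $(\Phi_{\alpha},\xi_{\alpha},\eta_{\alpha})$, $\alpha = 1,2$. Several of these are already recorded in Remark~\ref{firstproperties}; the remaining ones, notably $\eta_{1}\circ\Phi_{2} = -\eta_{2}\circ\Phi_{1}$ and the commutator identity $\Phi_{1}\circ\Phi_{2} - \eta_{2}\otimes\xi_{1} = -\Phi_{2}\circ\Phi_{1} + \eta_{1}\otimes\xi_{2}$, reduce --- via the local expressions Eq.~\ref{LocEq1} for $\Phi_{1}(\xi_{2})$ and $\eta_{2}\circ\Phi_{1}$ and the local form of $\Psi$ --- to the algebraic relations Eq.~\ref{relation1}--Eq.~\ref{relation3} of the complex almost contact structure (in particular $G_{i}H_{i} + H_{i}G_{i} = 0$, $G_{i}^{2} = H_{i}^{2} = -{\rm{Id}} + u_{i}\otimes A_{i} + v_{i}\otimes B_{i}$, and $u_{i}\circ H_{i} = v_{i}\circ G_{i} = 0$). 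Theorem~\ref{Kuotheorem} then yields the third almost contact structure, with $\xi_{3} = \Phi_{1}(\xi_{2})$, $\eta_{3} = \eta_{1}\circ\Phi_{2}$, $\Phi_{3} = \Phi_{1}\circ\Phi_{2} - \eta_{2}\otimes\xi_{1}$ as in Eq.~\ref{thirdalmostcontact}, and hence an almost contact $3$-structure on $Q$.

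For item~(2), a short computation from Eq.~\ref{secondendomorphism} gives $\eta_{3} = \eta_{1}\circ\Phi_{2} = -\eta_{2}\circ\Phi_{1}$, which by Eq.~\ref{LocEq1} reads $\eta_{3} = \sin(\phi_{i})\,\pi_{Q}^{\ast}u_{i} - \cos(\phi_{i})\,\pi_{Q}^{\ast}v_{i}$ on $Q(L)|_{U_{i}}$; comparing with the local form of $\eta_{2}$ in Eq.~\ref{localcontact} and with Eq.~\ref{realcontact}, this is precisely the imaginary part of Kobayashi's holomorphic $1$-form, i.e.\ $\vartheta|_{Q(L)} = \eta_{2} + \sqrt{-1}\,\eta_{3}$. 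Since $\sqrt{-1}\,\vartheta$ is again a holomorphic $1$-form on ${\rm{Tot}}(L^{\times})$ satisfying the nonvanishing condition, Kobayashi's local computation --- which depends only on $h_{i}$ and $\theta_{i}\wedge(d\theta_{i})^{n}$, with $\phi_{i}$ entering merely through a rotation that leaves the transverse Pfaffian unchanged --- applies verbatim to $\mathfrak{Re}(\sqrt{-1}\,\vartheta)|_{Q(L)} = -\eta_{3}$, yielding both $\eta_{3}\wedge(d\eta_{3})^{2n+1} \neq 0$ and the equality $\eta_{2}\wedge(d\eta_{2})^{2n+1} = \eta_{3}\wedge(d\eta_{3})^{2n+1}$; equivalently, the quarter-turn of the ${\rm{U}}(1)$-action pulls $\eta_{2}$ back to $\eta_{3}$ and preserves the associated volume form.

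Finally I would introduce $g_{Q} := \pi_{Q}^{\ast}g_{Z} + \eta_{1}\otimes\eta_{1}$ and settle items~(1) and~(3). Since $\pi_{Q}^{\ast}g_{Z}$ is ${\rm{U}}(1)$-invariant (the action covers the identity on $Z$) and $\eta_{1}$ is a connection form for an abelian structure group, $g_{Q}$ is ${\rm{U}}(1)$-invariant, whence $\mathscr{L}_{\xi_{1}}g_{Q} = 0$; and because $\Phi_{1}$ is the horizontal lift of $\mathscr{J}$ while $g_{Z}$ is $\mathscr{J}$-Hermitian, $g_{Q}$ is compatible with $(\Phi_{1},\xi_{1},\eta_{1})$ in the almost contact metric sense. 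For $\alpha = 2$, the identity $g_{Q}(\xi_{2},\,\cdot\,) = \eta_{2}$ is immediate from $\xi_{2} = \cos(\phi_{i})A_{i}^{\#} + \sin(\phi_{i})B_{i}^{\#}$ together with $u_{i} = g_{Z}(A_{i},\,\cdot\,)$ and $v_{i} = g_{Z}(B_{i},\,\cdot\,)$ (the latter since $g_{Z}$ is Hermitian), and the contact metric condition reduces to the key local computation
\begin{equation*}
d\eta_{2} = \eta_{1}\wedge(\eta_{2}\circ\Phi_{1}) + \cos(\phi_{i})\,\pi_{Q}^{\ast}\hat{G}_{i} + \sin(\phi_{i})\,\pi_{Q}^{\ast}\hat{H}_{i} = g_{Q}(\Phi_{2}\otimes{\rm{Id}}),
\end{equation*}
obtained by writing $d\phi_{i} = \eta_{1} - \pi_{Q}^{\ast}\sigma_{i}$ (Eq.~\ref{localIK}), recognizing $\hat{G}_{i} = du_{i} - \sigma_{i}\wedge v_{i}$, $\hat{H}_{i} = dv_{i} + \sigma_{i}\wedge u_{i}$ (Eq.~\ref{locendomorphisms}), and on the other side expanding $\Phi_{2}$ through Eq.~\ref{secondendomorphism} and using Eq.~\ref{compatiblemetric}; the full compatibility of $g_{Q}$ with $(\Phi_{2},\xi_{2},\eta_{2})$ then follows from the algebraic identity Eq.~\ref{compatible}. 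The case $\alpha = 3$ follows by the same computation, or more economically from ${\rm{U}}(1)$-invariance of $g_{Q}$ together with the fact that the quarter-turn of the ${\rm{U}}(1)$-action carries $(\Phi_{2},\xi_{2},\eta_{2})$ to $(\Phi_{3},\xi_{3},\eta_{3})$ up to signs. As every object has been built from $(Z,\mathscr{J},\theta)$ and a Hermitian metric on $L$ (which enters only through $g_{Z}$ and the IK-connection), the naturality clause holds. The main obstacle is precisely this last step: matching the curvature-type computation of $d\eta_{2}$ with the \emph{algebraically} defined tensor $\Phi_{2}$ and the Ishihara--Konishi metric $g_{Z}$, which is where the specific choice $\Phi_{2} = \Psi - \eta_{1}\otimes\Phi_{1}(\xi_{2}) - (\eta_{2}\circ\Phi_{1})\otimes\xi_{1}$ --- rather than the naive $\Psi$ --- is forced, and where one must keep careful track of the horizontal/vertical decomposition induced by the IK-connection.
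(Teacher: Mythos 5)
Your proposal is correct and follows essentially the same route as the paper: the same bundle $Q(L)$, the globally glued tensors $\Psi,\Xi$ and $\Phi_{2}$ as in Eq. \ref{secondendomorphism}, Kuo's theorem to produce the third structure, the identification $\vartheta|_{Q(L)} = \eta_{2} + \sqrt{-1}\,\eta_{3}$ for item (2), and the metric $g_{Q} = \pi_{Q}^{\ast}g_{Z} + \eta_{1}\otimes\eta_{1}$ with the key verification $d\eta_{\alpha} = g_{Q}(\Phi_{\alpha}\otimes{\rm{Id}})$ via Eq. \ref{locendomorphisms} and Eq. \ref{compatiblemetric}. The only deviations are cosmetic shortcuts (invoking Kobayashi's volume-form computation applied to $\sqrt{-1}\,\vartheta$ rather than redoing it, and the optional ${\rm{U}}(1)$-rotation argument for $\alpha = 3$), both of which are sound.
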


\begin{proof}
Consider $Q := Q(L)$, and $(\Phi_{\alpha},\xi_{\alpha},\eta_{\alpha})$, $\alpha = 1,2,$ such that $(\Phi_{1},\xi_{1},\eta_{1})$ is the almost contact structure induced by the IK-connection described in Remark \ref{IKconnection}, and $(\Phi_{2},\xi_{2},\eta_{2})$ is the almost contact structure described in Lemma \ref{Kobayashialmostcontact}. In what follows, we first prove that $Q$ admits an almost contact $3$-structure satisfying (1) and (2), and after that, we prove the existence of a compatible Riemannian metric $g_{Q}$ satisfying the desired properties.

In order to prove that $Q(L)$ admits an almost contact $3$-structure, from Theorem \ref{Kuotheorem}, it is enough to show that $(\Phi_{\alpha},\xi_{\alpha},\eta_{\alpha})$, $\alpha = 1,2,$ satisfy Eq. \ref{L1eq}, and Eq. \ref{L2eq}. As we have mentioned in Remark \ref{firstproperties}, the equations $\Phi_{1}(\xi_{2}) = -\Phi_{2}(\xi_{1})$ and $\eta_{1}(\xi_{2}) = \eta_{2}(\xi_{1}) = 0$, can be easily obtained from the definition of $(\Phi_{\alpha},\xi_{\alpha},\eta_{\alpha})$, $\alpha = 1,2$. Thus, in Eq. \ref{L1eq}, it remains to show that $\eta_{1} \circ \Phi_{2} = - \eta_{2} \circ \Phi_{1}$. This last equation can be easily verified as follows: Given $X \in \mathfrak{X}(Q(L))$, from the definition of $\Phi_{2}$ and $\eta_{1}$, we obtain
\begin{center}
$\eta_{1}(\Phi_{2}(X)) = \eta_{1}(\Psi(X)) - \eta_{1}(X)\eta_{1}(\Phi_{1}(\xi_{2})) - \eta_{2}(\Phi_{1}(X))\eta_{1}(\xi_{1}) = - \eta_{2}(\Phi_{1}(X))$.
\end{center}
Therefore, we have that $(\Phi_{\alpha},\xi_{\alpha},\eta_{\alpha})$, $\alpha = 1,2,$ satisfy Eq. \ref{L1eq}. In order to verify that Eq. \ref{L2eq} holds, we proceed as follows: Given $X \in \mathfrak{X}(Q)$, we notice that
\begin{equation}
\label{AB}
\Phi_{1}(\Phi_{2}(X)) = \Phi_{1}(\Psi(X)) - \eta_{1}(X) \Phi_{1}(\Phi_{1}(\xi_{2})) = \Phi_{1}(\Psi(X)) + \eta_{1}(X) \xi_{2},
\end{equation}
here we have used that $\eta_{1}(\xi_{2}) = 0$. Also, we have
\begin{equation}
\label{BA}
\Phi_{2}(\Phi_{1}(X)) = \Psi(\Phi_{1}(X)) - \eta_{2}(\Phi_{1}(\Phi_{1}(X))) \xi_{1} = \Psi(\Phi_{1}(X)) + \eta_{2}(X) \xi_{1},
\end{equation}
here we have used that $\eta_{2}(\xi_{1}) = 0$. Now, we consider the following fact:
\begin{claim}
\label{anticommutation}
$\Phi_{1}(\Psi(X)) = - \Psi(\Phi_{1}(X)).$
\end{claim}
\begin{proof}
From the definition of $\Psi$, it is enough to prove that the above equation holds locally. Thus, given $Y \in \mathfrak{X}(\pi_{Q}^{-1}(U_{i}))$, for some $U_{i} \in \mathscr{U}$, we have that
\begin{center}
$\Phi_{1}(\Psi(Y)) = \cos(\phi_{i})\big ((\mathscr{J} \circ G_{i})\pi_{Q \ast}(X) \big )^{\#} + \sin(\phi_{i}) \big ((\mathscr{J} \circ H_{i})\pi_{Q \ast}(X) \big )^{\#}.$
\end{center}
Since $G_{i} \circ \mathscr{J} = - \mathscr{J} \circ G_{i}$, and $H_{i} = G_{i} \circ \mathscr{J}$, we obtain
\begin{equation}
\label{AB2}
\Phi_{1}(\Psi(Y)) = \sin(\phi_{i})(G_{i}\pi_{Q \ast}(Y))^{\#} - \cos(\phi_{i})(H_{i}\pi_{Q \ast}(Y))^{\#}.
\end{equation}
On the other hand, we have 
\begin{center}
$\Psi(\Phi_{1}(Y)) = \cos(\phi_{i})\big ((G_{i} \circ \mathscr{J})\pi_{Q \ast}(Y) \big )^{\#} + \sin(\phi_{i}) \big ((H_{i} \circ \mathscr{J})\pi_{Q \ast}(Y) \big )^{\#},$
\end{center}
so it follows that 
\begin{equation}
\label{BA2}
\Psi(\Phi_{1}(Y)) = \cos(\phi_{i})(H_{i}\pi_{Q \ast}(Y))^{\#} - \sin(\phi_{i})(G_{i}\pi_{Q \ast}(Y))^{\#}.
\end{equation}
Therefore, from Eq. \ref{AB2}, adn Eq. \ref{BA2}, we obtain $\Phi_{1}(\Psi(Y)) = - \Psi(\Phi_{1}(Y))$, for any $Y \in \mathfrak{X}(\pi_{Q}^{-1}(U_{i}))$, and any $U_{i} \in \mathscr{U}$, so the desired equation holds for any $X \in \mathfrak{X}(Q(L))$. It concludes the proof of Claim \ref{anticommutation}.\end{proof}

Now, from Eq. \ref{AB}, Eq. \ref{BA}, and Claim \ref{anticommutation}, it follows that
\begin{center}
$\Phi_{1} \circ \Phi_{2} - \eta_{1} \otimes \xi_{2} = \Phi_{1} \circ \Psi = - \Psi \circ \Phi_{1} = - \Phi_{2} \circ \Phi_{1} + \eta_{2} \otimes \xi_{1}$.
\end{center}
Thus, we have that $(\Phi_{\alpha},\xi_{\alpha},\eta_{\alpha})$, $\alpha = 1,2,$ satisfy Eq. \ref{L1eq}. From Theorem \ref{Kuotheorem}, we conclude that $Q(L)$ admits an almost contact $3$-structure $(\Phi_{\alpha},\xi_{\alpha},\eta_{\alpha})$, $\alpha = 1,2,3$. 

From Theorem \ref{almostcircle}, we have that $(\Phi_{1},\xi_{1},\eta_{1})$ is a normal almost contact structure, so we obtain item $(1)$. Further, by construction, we have that $(Q(L),\eta_{2})$ is a contact manifold \cite{Kobayashi}, see also Section \ref{Kobayashi'scontact}. Thus, by considering $(\Phi_{3},\xi_{3},\eta_{3})$ obtained from Eq. \ref{thirdalmostcontact}, in order to prove item (2), it remains to show that $(Q(L),\eta_{3})$ is a contact manifold, and $\eta_{2} \wedge (d\eta_{2})^{2n+1} = \eta_{3} \wedge (d\eta_{3})^{2n+1} \neq 0$. In order to see that, notice that, from Eq. \ref{thirdalmostcontact}, and Eq. \ref{LocEq1}, we have
\begin{equation}
\eta_{3} = -\eta_{2} \circ \Phi_{1} = \sin(\phi_{i})\pi_{Q}^{\ast}(u_{i}) - \cos(\phi_{i})\pi_{Q}^{\ast}(v_{i}), \ \ \ {\text{on}} \ \ \ \pi_{Q}^{-1}(U_{i}).
\end{equation}
Thus, from Eq. \ref{realcontact}, we obtain
\begin{equation}
\label{imafinaryholomorphic}
\eta_{3} = \frac{\mathfrak{Im}\big({\rm{e}}^{\sqrt{-1}\phi_{i}} \pi_{Q}^{\ast}(\theta_{i})\big)}{\sqrt{h_{i} \circ \pi_{Q}}} = \frac{1}{2\sqrt{-1}}(\vartheta - \overline{\vartheta})|_{Q(L)},
\end{equation}
where $\vartheta =z_{i} \pi^{\ast}\theta_{i}$, on $\pi^{-1}(U_{i})$, see Eq. \ref{localholomorphcform}. As in the case of $\eta_{2}$, one can show that $\eta_{3} \wedge (d\eta_{3})^{2n+1} \neq 0$, on $Q(L)$. In fact, a direct computation shows that 
\begin{equation}
(\vartheta - \overline{\vartheta}) \wedge (d\vartheta - d\overline{\vartheta})^{2n+1} = (-1)^{n+1}C_{0}(z_{i}\overline{z_{i}})^{n}(\overline{z}_{i}dz_{i} - z_{i}d\overline{z}_{i}) \wedge \pi^{\ast}\big ( \theta_{i} \wedge \big (d\theta_{i} \big)^{n} \wedge \overline{\theta}_{i} \wedge \big (d\overline{\theta}_{i} \big)^{n} \big ),
\end{equation}
such that $C_{0} = \frac{(2n+1)!}{n!n!}$. Now, by using Eq. \ref{hermitianlocalrelation}, we have
\begin{equation}
z_{i} \overline{z}_{i} = \frac{1}{h_{i}} \Longrightarrow \overline{z}_{i}dz_{i} - z_{i}d\overline{z}_{i} = 2\overline{z}_{i}dz_{i} + \frac{dh_{i}}{h_{i}^{2}}.
\end{equation}
From the above relation, and a similar argument as in \cite{Kobayashi}, we obtain that
\begin{equation}
(\vartheta - \overline{\vartheta}) \wedge (d\vartheta - d\overline{\vartheta})^{2n+1}|_{Q(L)} = (-1)^{n+1}\frac{2C_{0}}{h_{i}^{n}}\overline{z}_{i}dz_{i}  \wedge \pi_{Q}^{\ast}\big ( \theta_{i} \wedge \big (d\theta_{i} \big)^{n} \wedge \overline{\theta}_{i} \wedge \big (d\overline{\theta}_{i} \big)^{n} \big ),
\end{equation}
is different from zero in every point of $Q(L)$. Thus, the pair $(Q(L),\eta_{\alpha})$ is a contact manifold for $\alpha = 2,3$. Moreover, since
\begin{equation}
(\vartheta + \overline{\vartheta}) \wedge (d\vartheta + d\overline{\vartheta})^{2n+1} = C_{0}(z_{i}\overline{z_{i}})^{n}(\overline{z}_{i}dz_{i} - z_{i}d\overline{z}_{i}) \wedge \pi^{\ast}\big ( \theta_{i} \wedge \big (d\theta_{i} \big)^{n} \wedge \overline{\theta}_{i} \wedge \big (d\overline{\theta}_{i} \big)^{n} \big ),
\end{equation}
cf. \cite{Kobayashi}, it follows that $\eta_{2} \wedge (d\eta_{2})^{2n+1} = \eta_{3} \wedge (d\eta_{3})^{2n+1}$, which concludes the proof of item (2).

Let $(Q,\eta_{\alpha})$, $\alpha = 2,3,$ be the contact manifolds describe above. As we have seen, from Theorem \ref{IshiharaKonishi}, we have a Hermitian metric $g_{Z}$ on $Z$ which is associated to the complex almost contact structure induced by $\theta  \in H^{0}(Z,  \Omega_{Z}^{1}\otimes E)$. From this associated Hermitian metric $g_{Z}$, and considering the almost complex structure $(\Phi_{1},\xi_{1},\eta_{1})$ on $Q$ induced from the IK-connection, we define
\begin{equation}
\label{submersionmetric}
g_{Q} := \pi_{Q}^{\ast}(g_{Z}) + \eta_{1} \otimes \eta_{1}.
\end{equation}
In order to prove that $g_{Q}$ is compatible with $(\Phi_{\alpha},\xi_{\alpha},\eta_{\alpha})$, $\alpha = 1,2,3$, we firstly will show that item (3) holds. 
\begin{claim}
\label{claim2}
$\eta_{\alpha}(X) = g_{Q}(X,\xi_{\alpha})$, and $d\eta_{\alpha} = g_{Q}(\Phi_{\alpha}(X),Y)$,  $\forall X,Y \in \mathfrak{X}(Q)$, $\alpha = 2,3$.
\end{claim}
\begin{proof} At first, notice that $g_{Q}(X,\xi_{\alpha}) = g_{Z}(\pi_{Q \ast}X,\pi_{Q \ast} \xi_{\alpha})$, for $\alpha = 2,3$. Now, since (locally)
\begin{center}
$\xi_{2} =  \cos(\phi_{i})A_{i}^{\#} + \sin(\phi_{i})B_{i}^{\#} \ \ \ {\text{and}} \ \ \ \xi_{3} = \sin(\phi_{i})A_{i}^{\#} - \cos(\phi_{i})B_{i}^{\#} $,
\end{center}
using the fact that $u_{i}(X) = g_{Z}(A_{i},X)$, $v_{i} = u_{i} \circ \mathscr{J}$, and that (locally)
\begin{equation}
\label{LocEq3}
\eta_{2} =  \cos(\phi_{i})\pi_{Q}^{\ast}(u_{i}) + \sin(\phi_{i})\pi_{Q}^{\ast}(v_{i}) \ \ \ {\text{and}} \ \ \ \eta_{3} = \sin(\phi_{i})\pi_{Q}^{\ast}(u_{i}) - \cos(\phi_{i})\pi_{Q}^{\ast}(v_{i}),
\end{equation}
one can easily verify that $g_{Q}(X,\xi_{\alpha}) = \eta_{\alpha}(X)$, for $\alpha =2,3$. Now, in order to verify that  $d\eta_{\alpha} = g_{Q}(\Phi_{\alpha} \otimes {\rm{Id}} )$, for $\alpha = 2,3$, we observe that, from Eq. \ref{LocEq3}, it follows that
\begin{equation}
\begin{cases} d\eta_{2} = - d\phi_{i} \wedge \eta_{3} + \cos(\phi_{i}) \pi_{Q}^{\ast}(du_{i}) + \sin(\phi_{i}) \pi_{Q}^{\ast}(dv_{i}),\\
  d\eta_{3} = d\phi_{i} \wedge \eta_{2} + \sin(\phi_{i})\pi_{Q}^{\ast}(du_{i})  - \cos(\phi_{i})\pi_{Q}^{\ast}(dv_{i}).\end{cases}
\end{equation}
From these, since $du_{i} = g_{Z}(G_{i} \otimes {\rm{Id}}) + \sigma_{i} \wedge v_{i}$, and $dv_{i} = g_{Z}(H_{i} \otimes {\rm{Id}}) - \sigma_{i} \wedge u_{i}$. see Eq. \ref{locendomorphisms} and Eq. \ref{compatiblemetric}, it follow that 
\begin{equation}
\label{compatibletwoform}
\begin{cases}d\eta_{2} = -\eta_{1} \wedge \eta_{3} + \cos(\phi_{i})\pi_{Q}^{\ast}\big (g_{Z}(G_{i} \otimes {\rm{Id}}) \big ) + \sin(\phi_{i}) \pi_{Q}^{\ast}\big (g_{Z}(H_{i} \otimes {\rm{Id}}) \big ),\\
d\eta_{3} = \eta_{1} \wedge \eta_{2} + \sin(\phi_{i})\pi_{Q}^{\ast}\big (g_{Z}(G_{i} \otimes {\rm{Id}}) \big ) - \cos(\phi_{i}) \pi_{Q}^{\ast}\big (g_{Z}(H_{i} \otimes {\rm{Id}}) \big ).\end{cases}
\end{equation}
Notice that, locally, $\eta_{1} = \pi_{Q}^{\ast}(\sigma_{i}) + d\phi_{i}$, see Eq. \ref{localIK}. On the other hand, since $\eta_{3} = \eta_{1} \circ \Phi_{2}$, and $\eta_{2} = - \eta_{1} \circ \Phi_{3}$, we have
\begin{equation}
\label{compatibleriemannian}
\begin{cases}g_{Q}(\Phi_{2} \otimes {\rm{Id}}) = (\pi_{Q}^{\ast}g_{Z})(\Phi_{2} \otimes {\rm{Id}}) + \eta_{3} \otimes \eta_{1},\\
g_{Q}(\Phi_{3} \otimes {\rm{Id}}) = (\pi_{Q}^{\ast}g_{Z})(\Phi_{3} \otimes {\rm{Id}}) - \eta_{2} \otimes \eta_{1}.\end{cases}
\end{equation}
Now, from Eq. \ref{thirdalmostcontact}, and Eq. \ref{secondendomorphism}, we can write
\begin{center}
$\Phi_{2} = \Psi - \eta_{1} \otimes \xi_{3} + \eta_{3} \otimes \xi_{1}$ \ \ \ and  \ \ \ $\Phi_{3} = \Phi_{1} \circ \Psi + \eta_{1} \otimes \xi_{2} - \eta_{2} \otimes \xi_{1}$.
\end{center}
Using the above identities, from the local description of $\Psi $ (Lemma \ref{localtensors}), and $\Phi_{1} \circ \Psi$ (Eq. \ref{AB2}), we obtain
\begin{enumerate}
\item $(\pi_{Q}^{\ast}g_{Z})(\Phi_{2} \otimes {\rm{Id}}) = \cos(\phi_{i})\pi_{Q}^{\ast}\big (g_{Z}(G_{i} \otimes {\rm{Id}}) \big ) + \sin(\phi_{i}) \pi_{Q}^{\ast}\big (g_{Z}(H_{i} \otimes {\rm{Id}}) \big ) - \eta_{1} \otimes \eta_{3}$,
\item $(\pi_{Q}^{\ast}g_{Z})(\Phi_{3} \otimes {\rm{Id}}) = \sin(\phi_{i})\pi_{Q}^{\ast}\big (g_{Z}(G_{i} \otimes {\rm{Id}}) \big ) - \cos(\phi_{i}) \pi_{Q}^{\ast}\big (g_{Z}(H_{i} \otimes {\rm{Id}}) \big ) + \eta_{1} \otimes \eta_{2}$. 
\end{enumerate}
Thus, replacing the above expressions in Eq. \ref{compatibleriemannian}, and comparing with Eq. \ref{compatibletwoform}, we have that 
\begin{center}
$d\eta_{2} = g_{Q}(\Phi_{2} \otimes {\rm{Id}} )$ \ \ \ and \ \ \ $d\eta_{3} = g_{Q}(\Phi_{3} \otimes {\rm{Id}} )$.
\end{center}
Notice that, from the last equations, we also obtain that $\xi_{\alpha}\lrcorner d\eta_{\alpha} = 0$, $\alpha = 2,3$, so $\xi_{\alpha}$ is in fact the characteristic vector field associated to the contact structure $\eta_{\alpha}$, $\alpha = 2,3$. It concludes the proof of Claim \ref{claim2}. \end{proof} 

From above, we have that $(g_{Q},\Phi_{\alpha},\xi_{\alpha},\eta_{\alpha})$ defines a contact metric structure on $Q$, and particularly an almost contact metric structure, for $\alpha = 2,3$. Now, to conclude the proof, it remains to verify the compatibility of $g_{Q}$ with the almost contact structure $(\Phi_{1},\xi_{1},\eta_{1})$, i.e., we need to show that 
\begin{center}
$g_{Q}(\xi_{1},X) = \eta_{1}(X), \ \ \ g_{Q}(\Phi_{1}(X),\Phi_{1}(Y)) = g_{Q}(X,Y) - \eta_{1}(X)\eta_{1}(Y),$ \ \ \ \ ($\forall X,Y \in \mathfrak{X}(Q)$).
\end{center}
Since $\eta_{1}(\xi_{1}) = 1$, and $\Phi_{1}( \xi_{1}) = 0$, it is enough to show that the second equation on the right-hand side above holds. From the definition of $g_{Q}$, and the definition of $(\Phi_{1},\xi_{1},\eta_{1})$ (see Remark \ref{IKconnection}), we have
\begin{center}
$g_{Q}(\Phi_{1}(X),\Phi_{1}(Y)) = g_{Z}(\mathscr{J} \pi_{Q \ast}X,\mathscr{J} \pi_{Q \ast}X) = g_{Z}(\pi_{Q \ast}X,\pi_{Q \ast}X) = g_{Q}(X,Y) - \eta_{1}(X)\eta_{1}(Y),$
\end{center}
$\forall X,Y \in \mathfrak{X}(Q)$. Hence, we have that $(g_{Q},\Phi_{1},\xi_{1},\eta_{1})$ defines an almost contact metric structure on $Q$. Further, since $d\eta_{1} = \pi_{Q}^{\ast}(\omega)$, such that $\omega \in \Omega^{1,1}(Z)$ (see Eq. \ref{curvature}), $\eta_{1}(\xi_{1}) = 1$, and $\pi_{Q \ast} \xi_{1} = 0$, we have that $\mathscr{L}_{\xi_{1}}g_{Q} = 0$. Hence, we obtain the desired almost contact metric 3-structure $(g_{Q},\Phi_{\alpha},\xi_{\alpha},\eta_{\alpha})$, $\alpha = 1,2,3$.
\end{proof}

\subsection{Proof of Corollary \ref{C1}}
The following result can be obtained directly from the previous Theorem:
\begin{corollary}
Under the hypotheses of Theorem \ref{Mainteorem}, for every $s = (a,b,c) \in S^{2}$, we have an almost contact metric structure $(g_{Q},\Phi_{s},\xi_{s},\eta_{s})$ on $Q$, such that 
\begin{equation}
\Phi_{s} = a\Phi_{1} + b\Phi_{2} + c\Phi_{3}, \ \ \ \xi_{s} = a\xi_{1} + b\xi_{2} + c\xi_{3}, \ \ \ \eta_{s} = a\eta_{1} + b\eta_{2} + c\eta_{3}.
\end{equation}
Moreover, by considering $\nu_{s} : = g_{Q}(\Phi_{s}\otimes{\rm{Id}})$, we have $\eta_{s} \wedge (\nu_{s})^{2n+1} = \eta_{s'} \wedge (\nu_{s'})^{2n+1} \neq 0$, for all $s,s' \in S^{2}$.
\end{corollary}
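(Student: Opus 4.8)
The plan is to split the statement into two independent parts: (i) for each $s\in S^{2}$ the quadruple $(g_{Q},\Phi_{s},\xi_{s},\eta_{s})$ is an almost contact metric structure; and (ii) the $(4n+3)$-form $\eta_{s}\wedge(\nu_{s})^{2n+1}$ is one fixed nonvanishing volume form on $Q$, the same for every $s$. Both parts rest only on the almost contact $3$-structure relations \ref{R1} together with the compatibility of the metric $g_{Q}$ already established in Theorem \ref{Mainteorem}; no further global analysis on $Q$ is needed.

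\textbf{Part (i): pointwise linear algebra.} Write $s=(s_{1},s_{2},s_{3})$ with $\sum_{\alpha}s_{\alpha}^{2}=1$. From \ref{R1} I would first record two elementary consequences: evaluating $\eta_{\alpha}\circ\Phi_{\beta}=\eta_{\gamma}$ at $\xi_{\beta}$ and using the standard identity $\Phi_{\beta}\xi_{\beta}=0$ gives $\eta_{\alpha}(\xi_{\beta})=0$ for $\alpha\neq\beta$ (while $\eta_{\alpha}(\xi_{\alpha})=1$), and adding the two halves of \ref{R1} gives $\Phi_{\alpha}\circ\Phi_{\beta}+\Phi_{\beta}\circ\Phi_{\alpha}=\eta_{\beta}\otimes\xi_{\alpha}+\eta_{\alpha}\otimes\xi_{\beta}$ for $\alpha\neq\beta$. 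Expanding $\Phi_{s}\circ\Phi_{s}=\sum_{\alpha,\beta}s_{\alpha}s_{\beta}\,\Phi_{\alpha}\circ\Phi_{\beta}$, inserting $\Phi_{\alpha}\circ\Phi_{\alpha}=-\mathrm{Id}+\eta_{\alpha}\otimes\xi_{\alpha}$ on the diagonal and the anticommutation relation off it, and using $\sum_{\alpha}s_{\alpha}^{2}=1$, collapses everything to $\Phi_{s}\circ\Phi_{s}=-\mathrm{Id}+\eta_{s}\otimes\xi_{s}$; similarly $\eta_{s}(\xi_{s})=\sum_{\alpha,\beta}s_{\alpha}s_{\beta}\,\eta_{\alpha}(\xi_{\beta})=\sum_{\alpha}s_{\alpha}^{2}=1$, so $(\Phi_{s},\xi_{s},\eta_{s})$ is an almost contact structure. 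For compatibility, $g_{Q}(\xi_{s},X)=\sum_{\alpha}s_{\alpha}\eta_{\alpha}(X)=\eta_{s}(X)$ is immediate; for the norm identity I would use that $g_{Q}(\Phi_{\alpha}X,Y)=-g_{Q}(X,\Phi_{\alpha}Y)$ (a formal consequence of the compatibility identity $g_{Q}(\Phi_{\alpha}X,\Phi_{\alpha}Y)=g_{Q}(X,Y)-\eta_{\alpha}(X)\eta_{\alpha}(Y)$) together with $\Phi_{\alpha}\circ\Phi_{\beta}=\Phi_{\gamma}+\eta_{\beta}\otimes\xi_{\alpha}$, to obtain, for $\alpha\neq\beta$, the cross-term identity $g_{Q}(\Phi_{\alpha}X,\Phi_{\beta}Y)+g_{Q}(\Phi_{\beta}X,\Phi_{\alpha}Y)=-\eta_{\alpha}(X)\eta_{\beta}(Y)-\eta_{\beta}(X)\eta_{\alpha}(Y)$. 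Summing the diagonal contributions $\sum_{\alpha}s_{\alpha}^{2}\big(g_{Q}(X,Y)-\eta_{\alpha}(X)\eta_{\alpha}(Y)\big)$ and these cross contributions yields $g_{Q}(\Phi_{s}X,\Phi_{s}Y)=g_{Q}(X,Y)-\eta_{s}(X)\eta_{s}(Y)$, which finishes (i).

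\textbf{Part (ii): the volume form is $s$-independent.} By definition $\nu_{s}=g_{Q}(\Phi_{s}\otimes\mathrm{Id})$ is the fundamental $2$-form of the almost contact metric structure $(g_{Q},\Phi_{s},\xi_{s},\eta_{s})$ on the $(4n+3)$-dimensional manifold $Q$. Fix $p\in Q$ and choose an orthonormal $\Phi_{s}$-frame $\xi_{s},e_{1},\Phi_{s}e_{1},\dots,e_{2n+1},\Phi_{s}e_{2n+1}$ of $T_{p}Q$; in the dual coframe one reads off that $\eta_{s}$ is the covector dual to $\xi_{s}$ and that $\nu_{s}^{2n+1}$ equals $(2n+1)!$ times the volume form of the $4n+2$-dimensional distribution $\ker\eta_{s}$, hence $\eta_{s}\wedge(\nu_{s})^{2n+1}=\pm(2n+1)!\,\mathrm{vol}_{g_{Q}}$. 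Since $s\mapsto\eta_{s}\wedge(\nu_{s})^{2n+1}$ is continuous and, by this formula, nowhere zero on $Q$, and since $S^{2}$ is connected, the sign cannot change: $\eta_{s}\wedge(\nu_{s})^{2n+1}$ is therefore one and the same nonvanishing $(4n+3)$-form for all $s\in S^{2}$, which is the claim. Equivalently, one can pass to the cone $\mathscr{C}(Q)=Q\times\mathbbm{R}$ with metric $g_{\mathscr{C}}$ as in Eq. \ref{conemetric}: there the fundamental form of the almost Hermitian structure $(g_{\mathscr{C}},\mathbbm{I}_{s})$ (almost Hermitian by (i)) is $\Theta_{s}:=g_{\mathscr{C}}(\mathbbm{I}_{s}\otimes\mathrm{Id})=\nu_{s}-\eta_{s}\wedge dt$, so $\Theta_{s}^{2n+2}=-(2n+2)\,\eta_{s}\wedge(\nu_{s})^{2n+1}\wedge dt$ equals $\pm(2n+2)!$ times the Riemannian volume of the metric $g_{\mathscr{C}}$, which does not depend on $s$; cancelling the fixed factor $dt$ gives the result.

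\textbf{Main obstacle.} There is essentially no obstruction: once Theorem \ref{Mainteorem} is in hand, both parts are formal. The one place deserving attention is the orientation argument in Part (ii), namely verifying that the sign in $\eta_{s}\wedge(\nu_{s})^{2n+1}=\pm(2n+1)!\,\mathrm{vol}_{g_{Q}}$ is locally and hence (by connectedness of $S^{2}$) globally constant; this is precisely where the non-vanishing supplied by the $\Phi_{s}$-frame computation is used.
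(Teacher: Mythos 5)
Your argument is correct, but it takes a genuinely different route from the paper: the paper disposes of this corollary in one line by invoking \cite[Theorem 4.3]{Cappeletti} (the general result on $3$-structures inducing taut almost cosymplectic spheres), whereas you reprove the statement from scratch out of Theorem \ref{Mainteorem}. Your Part (i) is exactly the algebra hidden behind that citation: the anticommutation identity $\Phi_{\alpha}\Phi_{\beta}+\Phi_{\beta}\Phi_{\alpha}=\eta_{\alpha}\otimes\xi_{\beta}+\eta_{\beta}\otimes\xi_{\alpha}$ and the orthogonality $\eta_{\alpha}(\xi_{\beta})=\delta_{\alpha\beta}$ follow from Eq. \ref{R1} as you say, the skew-symmetry $g_{Q}(\Phi_{\alpha}X,Y)=-g_{Q}(X,\Phi_{\alpha}Y)$ is indeed a formal consequence of compatibility (using $\eta_{\alpha}\circ\Phi_{\alpha}=0$, $\Phi_{\alpha}^{2}=-\mathrm{Id}+\eta_{\alpha}\otimes\xi_{\alpha}$), and the quadratic expansions collapse correctly because $\sum s_{\alpha}^{2}=1$. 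Your Part (ii) — the pointwise $\Phi_{s}$-frame computation giving $\eta_{s}\wedge(\nu_{s})^{2n+1}=\pm(2n+1)!\,\mathrm{vol}_{g_{Q}}$ at each point, followed by the observation that for fixed $p$ the sign is a continuous $\{\pm1\}$-valued function on the connected sphere $S^{2}$, hence constant — is a clean and complete proof of tautness, and the cone reformulation via $\Theta_{s}=\nu_{s}-\eta_{s}\wedge dt$ and Eq. \ref{conemetric} is consistent with it (note the same sign-constancy argument is still needed there, since $\Theta_{s}^{2n+2}=\pm(2n+2)!\,\mathrm{vol}_{g_{\mathscr{C}}}$ a priori only up to sign). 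What your approach buys is self-containedness and an explicit identification of the common volume form as $\pm(2n+1)!$ times the Riemannian volume of $g_{Q}$, which the paper's citation leaves implicit; what the paper's approach buys is brevity and an explicit link to the cosymplectic/contact $p$-sphere literature in which the corollary is then phrased.
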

\begin{proof}
This result follows from \cite[Theorem 4.3]{Cappeletti}.
\end{proof}

\subsection{Proof of Corollary \ref{Corollary2}} Another consequence of Theorem \ref{Mainteorem} is the following result:

\begin{corollary}
In the setting of Theorem \ref{Mainteorem}, $(Z, \mathscr{J},\theta)$ admits a K\"{a}hler-Einstein metric with positive scalar curvature if at least one of the following (equivalent) conditions holds:
\begin{enumerate}
\item $\Phi_{1} = \nabla \xi_{1}$, where $\nabla$ is the Levi-Civita connection of $g_{Q}$;
\item $\big [ \Phi_{\alpha} ,\Phi_{\alpha} \big ] + 2d\eta_{\alpha} \otimes \xi_{\alpha} = 0$, for $\alpha = 2$ or $\alpha = 3$.
\end{enumerate}
In particular, if $Z$ is compact, and $(1)$ or $(2)$ holds, then $(Z, \mathscr{J},\theta)$ is the twistor spaces of a positive quaternionic K\"{a}hler manifold.
\end{corollary}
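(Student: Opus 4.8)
The plan is to prove that, under either hypothesis $(1)$ or $(2)$, the almost contact metric $3$-structure $(g_{Q},\Phi_{\alpha},\xi_{\alpha},\eta_{\alpha})$, $\alpha = 1,2,3$, furnished by Theorem \ref{Mainteorem} is in fact a $3$-Sasakian structure on $Q$; once this is established, all the assertions follow from classical results, and the equivalence of $(1)$ and $(2)$ is read off along the way, since every underlying almost contact structure of a $3$-Sasakian structure is Sasakian, and a Sasakian structure $(g,\Phi,\xi,\eta)$ satisfies both $\Phi = \nabla\xi$ and $[\Phi,\Phi]+2d\eta\otimes\xi = 0$.

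\medskip

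First I would treat hypothesis $(2)$, say with $\alpha = 2$ (the case $\alpha = 3$ is identical). By Theorem \ref{Mainteorem}(1) the structure $(\Phi_{1},\xi_{1},\eta_{1})$ is normal, and by hypothesis so is $(\Phi_{2},\xi_{2},\eta_{2})$; hence Theorem \ref{3normality} makes $(\Phi_{3},\xi_{3},\eta_{3})$ normal as well. Since $(g_{Q},\Phi_{\alpha},\xi_{\alpha},\eta_{\alpha})$ is a contact metric structure for $\alpha = 2,3$ by Theorem \ref{Mainteorem}(3), both of these are normal contact metric structures, i.e.\ Sasakian, and then Theorem \ref{2sasaki} forces $(g_{Q},\Phi_{1},\xi_{1},\eta_{1})$ to be Sasakian as well, so the whole $3$-structure is $3$-Sasakian. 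For hypothesis $(1)$, I would first observe that, since $(g_{Q},\Phi_{1},\xi_{1},\eta_{1})$ is an almost contact metric structure, $\Phi_{1}$ is $g_{Q}$-skew and $\eta_{1}(\cdot) = g_{Q}(\xi_{1},\cdot)$, so that $\nabla\xi_{1} = \Phi_{1}$ together with $\nabla$ being metric and torsion-free gives $d\eta_{1}(X,Y) = g_{Q}(\Phi_{1}X,Y) - g_{Q}(\Phi_{1}Y,X) = 2g_{Q}(\Phi_{1}X,Y)$; as $\Phi_{1}$ restricts to a $g_{Q}$-compatible complex structure on $\ker\eta_{1}$, this $2$-form is non-degenerate there, so $\eta_{1}$ is a contact form and, up to the standard normalization, $g_{Q}$ is an associated metric for it. Being normal (Theorem \ref{Mainteorem}(1)), $(g_{Q},\Phi_{1},\xi_{1},\eta_{1})$ is then Sasakian, and together with Theorem \ref{Mainteorem}(3) this exhibits $\eta_{1},\eta_{2},\eta_{3}$ as three contact forms with common associated metric $g_{Q}$ satisfying Eq.\ \ref{R1}, i.e.\ a contact metric $3$-structure, which is $3$-Sasakian by Kashiwada's Theorem \ref{Kashiwada}. (In particular $(1)\Rightarrow(2)$, while the first argument gives $(2)\Rightarrow 3\text{-Sasakian}\Rightarrow(1)$, so $(1)$ and $(2)$ are equivalent.)

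\medskip

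Granting that $Q$ is $3$-Sasakian, I would conclude as follows. Theorem \ref{Kashiwada2} gives that $(Q,g_{Q})$ is Einstein with positive scalar curvature, so $(g_{Q},\Phi_{1},\xi_{1},\eta_{1})$ is a Sasaki--Einstein structure. By Theorem \ref{Mainteorem}(1) the characteristic foliation $\mathscr{F}_{\xi_{1}}$ is the fibration of the ${\rm{U}}(1)$-principal bundle $Q \to Z$, hence regular with leaf space $Z = Q/\mathscr{F}_{\xi_{1}}$, and its transverse complex structure is $\mathscr{J}$ (the one induced by $\Phi_{1}$). By Remark \ref{SEandKE}, the transverse K\"{a}hler--Einstein structure of $(g_{Q},\Phi_{1},\xi_{1},\eta_{1})$ then descends to a K\"{a}hler--Einstein metric on $(Z,\mathscr{J})$ with ${\text{Ric}}_{g^{T}} = 2(n+1)g^{T}$, in particular of positive scalar curvature. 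If moreover $Z$ is compact, positivity of the Ricci curvature of this metric makes $Z$ Fano, so $(Z,\mathscr{J},\theta)$ is a Fano contact manifold admitting a K\"{a}hler--Einstein metric, and LeBrun's Theorem \ref{LeBrun}, together with the correspondence with twistor spaces of positive quaternionic K\"{a}hler manifolds recalled before it, identifies $Z$ as the twistor space of a compact positive quaternionic K\"{a}hler manifold.

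\medskip

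I expect the main obstacle to be the middle step, i.e.\ upgrading the (non-unique) almost contact metric $3$-structure of Theorem \ref{Mainteorem} to a genuine $3$-Sasakian one: in particular, checking under $(1)$ that $\eta_{1}$ is forced to be a contact form and that $(g_{Q},\Phi_{1},\xi_{1},\eta_{1})$ is Sasakian (reconciling the normalization conventions for $d\eta_{1}$ and the fundamental $2$-form), and verifying that the propagation results (Theorems \ref{3normality}, \ref{2sasaki}, \ref{Kashiwada}) apply verbatim to this structure. Everything downstream of ``$Q$ is $3$-Sasakian'' is a direct citation.
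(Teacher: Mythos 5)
Your proposal is correct and follows essentially the same route as the paper: under (2) you propagate normality via Theorems \ref{3normality} and \ref{2sasaki} exactly as the paper does, and under (1) you derive $d\eta_{1} = g_{Q}(\Phi_{1}\otimes{\rm{Id}})$ (the paper computes it using the Killing property of $\xi_{1}$, you via $g_{Q}$-skewness of $\Phi_{1}$ — the same calculation up to the $d$-convention you rightly flag) to get a contact metric 3-structure and then invoke Kashiwada's Theorem \ref{Kashiwada}. The remaining steps — Theorem \ref{Kashiwada2}, descent of the transverse K\"ahler--Einstein structure through the regular foliation $\mathscr{F}_{\xi_{1}}$ as in Remark \ref{SEandKE}, and the compact/Fano/LeBrun conclusion — coincide with the paper's argument.
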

\begin{proof}
By considering $(\Phi_{1},\xi_{1},\eta_{1})$ as in Theorem \ref{Mainteorem}, since $\eta_{1}(X) = g_{Q}(X,\xi_{1})$, $\forall X \in \mathfrak{X}(Q)$, and $\mathscr{L}_{\xi_{1}}g_{Q} = 0$, it follows that 
\begin{center}
$\displaystyle 2d\eta_{1}(X,Y) =  X(\eta_{1}(Y)) - Y(\eta_{1}(X)) - \eta_{1}([X,Y]) = g_{Q}(\nabla_{X}\xi_{1},Y) - g_{Q}(\nabla_{Y}\xi_{1},X) = 2g_{Q}(\nabla_{X}\xi_{1},Y)$.
\end{center}
Hence, if $\Phi_{1} = \nabla \xi_{1}$, it follows that $d\eta_{1} = g_{Q}(\Phi_{1} \otimes {\rm{Id}} )$, which in turn implies that $(g_{Q},\Phi_{\alpha},\xi_{\alpha},\eta_{\alpha})$, $\alpha = 1,2,3$, is a contact metric 3-structure. From Theorem \ref{Kashiwada}, it follows that $(g_{Q},\Phi_{\alpha},\xi_{\alpha},\eta_{\alpha})$, $\alpha = 1,2,3$, is in fact a 3-Sasaki structure on $Q$. Notice that, in particular, it follows that $(1) \Rightarrow (2)$. Applying Theorem \ref{Kashiwada2}, we have that $(g_{Q},\Phi_{1},\xi_{1},\eta_{1})$ is, particularly, a regular Sasaki-Einstein structure, so the transverse K\"{a}hler-Einstein structure $(\mathscr{D}_{1} := \ker(\eta_{1}), \Phi_{1}|_{\mathscr{D}_{1}},g_{Q}^{T})$ pushes down to the K\"{a}hler-Einstein structure $(\mathscr{J},g_{Z})$ with positive scalar curvature on $Z = Q/\mathscr{F}_{\xi_{1}}$, see Remark \ref{SEandKE}, Remark \ref{IKconnection}, and Eq. \ref{submersionmetric}. Thus, if $(1)$ holds, we have that $(Z, \mathscr{J},\theta)$ admits a K\"{a}hler-Einstein metric with positive scalar curvature. Now, if one supposes that $(2)$ holds, since $(\Phi_{1},\xi_{1},\eta_{1})$ is normal, it follows from Theorem \ref{3normality} that $(g_{Q},\Phi_{\alpha},\xi_{\alpha},\eta_{\alpha})$, $\alpha = 2,3$, are both Sasakian structures. Hence, from Theorem \ref{2sasaki}, we have that $(g_{Q},\Phi_{\alpha},\xi_{\alpha},\eta_{\alpha})$, $\alpha = 1,2,3$, is in fact a 3-Sasakian structure. Notice that, in particular, it follows that $(2) \Rightarrow (1)$. As before, we have that $(g_{Q},\Phi_{1},\xi_{1},\eta_{1})$ is a regular Sasaki-Einstein structure, so a similar argument shows that $(\mathscr{D}_{1} := \ker(\eta_{1}), \Phi_{1}|_{\mathscr{D}_{1}},g_{Q}^{T})$ pushes down to the K\"{a}hler-Einstein structure with positive scalar curvature on $Z$. Now, if $Z$ is compact, and satisfies (1) or (2), it follows that $Z$ is a Fano contact K\"{a}hler-Einstein manifold. Thus, from Theorem \ref{LeBrun}, we have that $Z$ is the twistor space of a positive quaterionic K\"{a}hler manifold.

\end{proof}

\subsection{Proof of Corollary \ref{3contactfano}} In the setting of Fano contact manifolds, we have the following consequence:

\begin{corollary}
Let $(Z, \mathscr{J},\theta)$ be a Fano contact manifold of complex dimension $2n+1 \geq 3$. Then there exists a $\rm{U}(1)$-principal bundle $Q$ over $Z$ which admits an almost contact metric 3-structure $(g_{Q},\Phi_{\alpha},\xi_{\alpha},\eta_{\alpha})$, $\alpha = 1,2,3$, satisfying the following properties:
\begin{enumerate}
\item $(\Phi_{1},\xi_{1},\eta_{1})$ is a normal almost contact structure, such that $Z = Q/\mathscr{F}_{\xi_{1}}$, and $\mathscr{L}_{\xi_{1}}g_{Q} = 0$;
\item $(\eta_{1},\eta_{2},\eta_{3})$ is a triple of contact structures, such that $\eta_{2} \wedge (d\eta_{2})^{2n+1} = \eta_{3} \wedge (d\eta_{3})^{2n+1} \neq 0$;
\item $(g_{Q},\Phi_{\alpha},\xi_{\alpha},\eta_{\alpha})$ is a contact metric structure, for $\alpha = 2,3$.
\end{enumerate}
Moreover, both $Q$ and $(g_{Q},\Phi_{\alpha},\xi_{\alpha},\eta_{\alpha})$, $\alpha = 1,2,3$, can be constructed in a natural way from $Z$ and $\theta$. 
\end{corollary}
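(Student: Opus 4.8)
The plan is to read this statement off from Theorem \ref{Mainteorem} in the special case at hand, with a single extra point to verify for item (2). A Fano contact manifold $(Z,\mathscr{J},\theta)$ is in particular a complex contact manifold of complex dimension $2n+1 \geq 3$, so Theorem \ref{Mainteorem} applies directly: one takes $Q := Q(L)$ with $L = E^{-1}$, equips it with the Ishihara-Konishi connection $\sqrt{-1}\eta_{1}$ (Remark \ref{IKconnection}) and its normal almost contact structure $(\Phi_{1},\xi_{1},\eta_{1})$, Kobayashi's contact form $\eta_{2}=\eta$ together with $(\Phi_{2},\xi_{2},\eta_{2})$ from Lemma \ref{Kobayashialmostcontact}, the third structure $(\Phi_{3},\xi_{3},\eta_{3})$ given by Eq. \ref{thirdalmostcontact}, and the metric $g_{Q}$ of Eq. \ref{submersionmetric}. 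This immediately yields items (1) and (3), the naturality clause, and the part of (2) asserting that $\eta_{2}$ and $\eta_{3}$ are contact forms with $\eta_{2}\wedge(d\eta_{2})^{2n+1} = \eta_{3}\wedge(d\eta_{3})^{2n+1}\neq 0$.

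The only genuinely new assertion, as compared with Theorem \ref{Mainteorem}, is that $(\eta_{1},\eta_{2},\eta_{3})$ is a \emph{triple} of contact forms, i.e. that $\eta_{1}$ too is a contact form on $Q$; this is where the Fano hypothesis enters. By Remark \ref{IKconnection} one has $d\eta_{1} = \pi_{Q}^{\ast}(\omega)$, where $\omega = \sqrt{-1}\,\partial\overline{\partial}\log h_{i}$ on $U_{i}$, a global closed $(1,1)$-form on $Z$ whose class is $c_{1}(E) = -c_{1}(L)$. Since $L^{\otimes(n+1)} = K_{Z}$, we have $c_{1}(L) = -\tfrac{1}{n+1}c_{1}(Z)$, so $c_{1}(E) = \tfrac{1}{n+1}c_{1}(Z)$ is a positive class and $E$ is ample. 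The construction in Theorem \ref{Mainteorem} is valid for \emph{any} Hermitian structure $\langle \cdot\,,\cdot\rangle_{L}$ on $L$, so one is free to choose the one whose Chern curvature $\omega$ is positive, i.e. a K\"{a}hler form on $Z$. Then $\omega^{2n+1}$ is nowhere zero, and using $\eta_{1}(\xi_{1}) = 1$ and $\pi_{Q\ast}\xi_{1} = 0$ one gets
\[
\eta_{1}\wedge(d\eta_{1})^{2n+1} \;=\; \eta_{1}\wedge\pi_{Q}^{\ast}\big(\omega^{2n+1}\big)\;\neq\;0
\]
everywhere on $Q$, so $\eta_{1}$ is a contact form and item (2) holds in the stronger form stated. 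This also makes transparent the remark following the statement: $(Q,\eta_{1})$ is then the Boothby-Wang fibration over $(Z,\omega)$ associated with the integral Euler class ${\rm{e}}(Q) = c_{1}(E) \in H^{2}(Z,\mathbbm{Z})$.

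The proof thus has essentially no obstacle, since the substantive content is entirely in Theorem \ref{Mainteorem}, which we invoke. The one place demanding care is the sign and normalization bookkeeping of the previous paragraph: one must confirm that the Hermitian-metric curvature appearing in the IK-connection is that of $E = L^{-1}$ rather than of $L$, so that "$Z$ Fano" genuinely buys positivity of $\omega$ and hence the contact condition for $\eta_{1}$. Once this is pinned down, items (1)--(3) and the naturality statement follow from the specialization above, completing the proof.
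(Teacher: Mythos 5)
Your proposal is correct and follows essentially the same route as the paper: the paper also specializes Theorem \ref{Mainteorem} after choosing (via compactness and $c_{1}(E)=\tfrac{1}{n+1}c_{1}(Z)>0$, citing \cite[Theorem 7.10]{Vosin}) a Hermitian structure on $L=E^{-1}$ whose associated curvature form $\omega$ in $d\eta_{1}=\pi_{Q}^{\ast}(\omega)$ is a K\"ahler form, and then concludes $\eta_{1}\wedge(d\eta_{1})^{2n+1}\neq 0$ from $\omega^{2n+1}\neq 0$. The sign point you flag is resolved exactly as you expect: $\omega=\sqrt{-1}\,\partial\overline{\partial}\log h_{i}$ represents (up to $2\pi$) $c_{1}(E)=-c_{1}(L)$, so the Fano hypothesis indeed yields positivity.
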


\begin{proof}
Supposing that $c_{1}(Z) > 0$, since $E^{\otimes (n+1)} \cong \det(TZ)$, it follows that $c_{1}(E) > 0$, so we have $\frac{\omega}{2\pi} \in c_{1}(E)$, such that $\omega \in \Omega^{1,1}(Z)$ defines a K\"{a}hler structure on $Z$. Now, since $L = E^{-1}$, we have $-\frac{\omega}{2\pi} \in c_{1}(L)$, and from the compactness of $Z$, we can find a Hermitian structure $\langle \cdot \ , \cdot \rangle_{L} \colon L \times L \to \mathbbm{C}$, such that the curvature $F_{\nabla}$ of its Chern connection $\nabla$ satisfies 
\begin{equation}
\label{curvatureIK}
\displaystyle \frac{\sqrt{-1}}{2\pi}F_{\nabla} = -\frac{\omega}{2\pi},
\end{equation}
see for instance \cite[Theorem 7.10]{Vosin}. By observing that the Hermitian structure $\langle \cdot \ , \cdot \rangle_{L}$ can be described in terms of local smooth positive functions $q_{i} \colon U_{i} \to \mathbbm{R}^{+}$, $U_{i} \in \mathscr{U}$, satisfying $q_{j} = q_{i}|g_{ij}|^{2}$, on $U_{i} \cap U_{j} \neq \emptyset$, such that $L = \{g_{ij}\}$, on every $U_{i} \in \mathscr{U}$, we have $\nabla = d + \partial \log(q_{i})$, and $F_{\nabla} = - \partial \overline{\partial} \log(q_{i})$. Therefore, if we take $\varpi_{i} := \frac{\theta_{i}}{\sqrt{q_{i}}}$, on every $U_{i} \in \mathscr{U}$, and proceed as in Section \ref{Cplxalmostsection}, from the $\Omega$-structure $\{\Omega_{i}\}$, such that 
\begin{center}
$\displaystyle \Omega_{i} = \partial \varpi_{i} - \frac{1}{2}\partial\log(q_{i}) \wedge \varpi_{i}$,
\end{center}
we obtain a complex almost contact metric structure $(g_{Z},u_{i},v_{i},A_{i},B_{i},G_{i},H_{i})$ on $Z$. In this case, the associated local potentials $\sigma_{i} \in \Omega^{1}(U_{i})$, $U_{i} \in \mathscr{U}$, which define the IK-connection on the sphere bundle $Q(L)$ are given by
\begin{center}
$\displaystyle \sqrt{-1}\sigma_{i} = \frac{1}{2}(\partial - \overline{\partial}) \log(q_{i}),$
\end{center}
that is, we have a connection $1$-form $\sqrt{-1}\eta_{1} \in \Omega^{1}(Q(L);\mathfrak{u}(1))$, such that $\eta_{1} = \pi_{Q}^{\ast}(\sigma_{i}) + d\phi_{i}$, on $Q(L)|_{U_{i}}$, for every $U_{i} \in \mathscr{U}$. Now, using the complex almost contact metric structure induced by $\varpi_{i} = \frac{\theta_{i}}{\sqrt{q_{i}}}$, one can apply the construction provided in the proof of Theorem \ref{Mainteorem} in order to obtain an almost contact metric $3$-structure $(g_{Q},\Phi_{\alpha},\xi_{\alpha},\eta_{\alpha})$, $\alpha = 1,2,3$, on $Q(L)$ satisfying (1) and (3). Moreover, $\eta_{2}$ and $\eta_{3}$ are contact structures, such that $\eta_{2} \wedge (d\eta_{2})^{2n+1} = \eta_{3} \wedge (d\eta_{3})^{2n+1} \neq 0$. Hence, in order to conclude the proof, it remains to show that $\eta_{1} \wedge (d\eta_{1})^{2n+1} \neq 0$. From Eq. \ref{curvature}, we have
\begin{center}
$d\eta_{1} = \pi_{Q}^{\ast}(d\sigma_{i}) = \pi_{Q}^{\ast} \big (\sqrt{-1}\partial \overline{\partial} \log(q_{i}) \big ) = \pi_{Q}^{\ast}(\omega)$.
\end{center}
Thus, since $\omega^{2n+1} \neq 0$, it follows that $\eta_{1} \wedge (d\eta_{1})^{2n+1} \neq 0$.
\end{proof}

\subsection{Proof of Corollary \ref{C2}} Finally, from Theorem \ref{Mainteorem}, we are able to prove the following result:

\begin{corollary}
Let $(Z, \mathscr{J},\theta)$ be a complex contact manifold of complex dimension $2n+1 \geq 3$. Then there exists a $\mathbbm{C}^{\times}$-principal bundle $\mathscr{U}(Z)$ over $Z$ such that $\mathscr{U}(Z)$ admits an almost hyperhermitian structure $(g_{\mathscr{U}},\mathbbm{I}_{1},\mathbbm{I}_{2},\mathbbm{I}_{3})$, satisfying:

\begin{enumerate}
\item $(g_{\mathscr{U}},\mathbbm{I}_{1})$ is a Hermitian structure, i.e., $[ \mathbbm{I}_{1},\mathbbm{I}_{1}] = 0$;

\item $\omega_{\alpha} = g_{\mathscr{U}}(\mathbbm{I}_{\alpha} \otimes {\rm{Id}})$, $\alpha = 2,3$, are symplectic structures;

\item $\Upsilon = \omega_{2} + \sqrt{-1}\omega_{3}$ is a holomprphic symplectic structure on $(\mathscr{U}(Z),\mathbbm{I}_{1})$.
\end{enumerate}
Furthermore, both $\mathscr{U}(Z)$ and $(g_{\mathscr{U}},\mathbbm{I}_{1},\mathbbm{I}_{2},\mathbbm{I}_{3})$ can be constructed in a natural way from $(Z, \mathscr{J},\theta)$. 
\end{corollary}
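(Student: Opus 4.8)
The plan is to take $\mathscr{U}(Z):={\text{Tot}}(L^{\times})$, which is a $\mathbbm{C}^{\times}$-principal bundle over $Z$, fix a Hermitian structure $\langle\cdot\,,\cdot\rangle_{L}$ on $L$ as in Section \ref{Kobayashi'scontact}, and use the resulting diffeomorphism $\mathscr{U}(Z)\cong Q(L)\times\mathbbm{R}$ sending $u\mapsto\big(u/\langle u,u\rangle_{L}^{1/2},\,t\big)$ with $t:=\tfrac12\log\langle u,u\rangle_{L}$, so that $Q(L)=\{t=0\}$ and $\tfrac{d}{dt}$ generates the $\mathbbm{R}^{+}$-part of the $\mathbbm{C}^{\times}$-action (note ${\rm{e}}^{t}=\sqrt{h_{i}}\,|z_{i}|$ on $L^{\times}|_{U_{i}}$). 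Applying Theorem \ref{Mainteorem} to $(Z,\mathscr{J},\theta)$ gives the almost contact metric $3$-structure $(g_{Q},\Phi_{\alpha},\xi_{\alpha},\eta_{\alpha})$ on $Q:=Q(L)$, and following Remark \ref{hypercomplex} I set $\mathbbm{I}_{\alpha}(X):=\Phi_{\alpha}(X)-\eta_{\alpha}(X)\tfrac{d}{dt}$ and $\mathbbm{I}_{\alpha}\big(\tfrac{d}{dt}\big):=\xi_{\alpha}$ for $X\in\mathfrak{X}(Q)$ and $\alpha=1,2,3$, so that $(\mathbbm{I}_{1},\mathbbm{I}_{2},\mathbbm{I}_{3})$ is an almost hypercomplex structure on $\mathscr{U}(Z)$. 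Finally I define
\[
g_{\mathscr{U}}:={\rm{e}}^{t}\big(g_{Q}+dt\otimes dt\big)={\rm{e}}^{t}\big(\pi_{Q}^{\ast}(g_{Z})+\eta_{1}\otimes\eta_{1}+dt\otimes dt\big).
\]
Since $g_{Q}$ is compatible with each $(\Phi_{\alpha},\xi_{\alpha},\eta_{\alpha})$ (Theorem \ref{Mainteorem} together with Eq. \ref{compatible}), the metric $g_{Q}+dt\otimes dt$ is $\mathbbm{I}_{\alpha}$-Hermitian for every $\alpha$, and this is unaffected by the conformal factor ${\rm{e}}^{t}$; hence $(g_{\mathscr{U}},\mathbbm{I}_{1},\mathbbm{I}_{2},\mathbbm{I}_{3})$ is almost hyperhermitian.

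For item (1), the structure $(\Phi_{1},\xi_{1},\eta_{1})$ is normal by Theorem \ref{Mainteorem}(1), so by the equivalence in Eq. \ref{Normalintegrable} the induced $\mathbbm{I}_{1}$ is integrable; combined with the compatibility just noted, this says precisely that $(g_{\mathscr{U}},\mathbbm{I}_{1})$ is a Hermitian structure.

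For items (2) and (3), I use that $(g_{Q},\Phi_{\alpha},\xi_{\alpha},\eta_{\alpha})$ is a contact metric structure for $\alpha=2,3$, i.e.\ $d\eta_{\alpha}=g_{Q}(\Phi_{\alpha}\otimes{\rm{Id}})$. A direct computation then gives $\omega_{\alpha}:=g_{\mathscr{U}}(\mathbbm{I}_{\alpha}\otimes{\rm{Id}})={\rm{e}}^{t}\big(d\eta_{\alpha}+dt\wedge\eta_{\alpha}\big)=d({\rm{e}}^{t}\eta_{\alpha})$, so $\omega_{\alpha}$ is closed; and since $\eta_{\alpha}\wedge(d\eta_{\alpha})^{2n+1}\neq0$ on $Q$ by Theorem \ref{Mainteorem}(2), one obtains $\omega_{\alpha}^{2n+2}=(2n+2){\rm{e}}^{(2n+2)t}\,dt\wedge\eta_{\alpha}\wedge(d\eta_{\alpha})^{2n+1}\neq0$, hence $\omega_{2},\omega_{3}$ are symplectic, which is (2). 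For (3), put $\Upsilon:=\omega_{2}+\sqrt{-1}\,\omega_{3}$. From the quaternionic relations $\mathbbm{I}_{1}\mathbbm{I}_{2}=\mathbbm{I}_{3}=-\mathbbm{I}_{2}\mathbbm{I}_{1}$ and $\mathbbm{I}_{3}\mathbbm{I}_{1}=\mathbbm{I}_{2}$ (Remark \ref{hypercomplex}) together with $\omega_{\alpha}=g_{\mathscr{U}}(\mathbbm{I}_{\alpha}\otimes{\rm{Id}})$, a short calculation yields $\Upsilon(\mathbbm{I}_{1}X,Y)=\sqrt{-1}\,\Upsilon(X,Y)$ for all $X,Y$, i.e.\ $\Upsilon$ is of type $(2,0)$ with respect to $\mathbbm{I}_{1}$. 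Being also $d$-closed on the complex manifold $(\mathscr{U}(Z),\mathbbm{I}_{1})$, $\Upsilon$ is therefore a holomorphic $2$-form, and expanding $\omega_{2}=\tfrac12(\Upsilon+\overline{\Upsilon})$ one finds $\omega_{2}^{2n+2}=\binom{2n+2}{n+1}2^{-(2n+2)}\,\Upsilon^{n+1}\wedge\overline{\Upsilon}^{\,n+1}$, so nondegeneracy of $\omega_{2}$ forces $\Upsilon^{n+1}\neq0$ everywhere; hence $\Upsilon$ is a holomorphic symplectic structure on $(\mathscr{U}(Z),\mathbbm{I}_{1})$. Finally, writing $z_{i}={\rm{e}}^{\rho_{i}+\sqrt{-1}\phi_{i}}$ and using ${\rm{e}}^{t}=\sqrt{h_{i}}\,|z_{i}|$, Eq. \ref{realcontact}, and Eq. \ref{localcontact}, one checks that the globally defined holomorphic $1$-form $\vartheta=z_{i}\pi^{\ast}\theta_{i}$ satisfies $\vartheta={\rm{e}}^{t}(\eta_{2}+\sqrt{-1}\,\eta_{3})$, so that $\Upsilon=d\vartheta=d(\pi^{\ast}\theta)$ — the "natural" description of $\Upsilon$; the naturality of the whole construction then follows from that of Theorem \ref{Mainteorem}.

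The main point requiring care is the conformal factor ${\rm{e}}^{t}$: it is exactly what is needed to make $\omega_{2},\omega_{3}$ closed (the plain "cone" metric $g_{Q}+dt\otimes dt$ only yields $d\omega_{\alpha}=\omega_{\alpha}\wedge dt$, cf.\ Remark \ref{Hyperkahler3contact}), and one must verify it does not spoil the Hermitian condition for $\mathbbm{I}_{1}$, which it does not. Beyond this, the only genuine geometric input is the normality of $(\Phi_{1},\xi_{1},\eta_{1})$ coming from Theorem \ref{Mainteorem}: the holomorphicity of $\Upsilon$ then follows formally, since its being of type $(2,0)$ is a purely algebraic consequence of the quaternionic relations and the metric compatibility. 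The remaining effort is the local bookkeeping identifying $\Upsilon$ with $d(\pi^{\ast}\theta)$.
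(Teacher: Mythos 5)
Your proposal is correct, and the underlying construction --- $\mathscr{U}(Z)={\text{Tot}}(L^{\times})\cong Q(L)\times\mathbbm{R}$, the almost hypercomplex structure $\mathbbm{I}_{\alpha}$ induced by the almost contact metric $3$-structure of Theorem \ref{Mainteorem}, the rescaled cone metric $g_{\mathscr{U}}={\rm{e}}^{t}(g_{Q}+dt\otimes dt)$, and the identity $\omega_{\alpha}=d({\rm{e}}^{t}\eta_{\alpha})$ --- coincides with the paper's, as do the arguments for items (1) and (2) (you additionally verify nondegeneracy of $\omega_{\alpha}$ by computing $\omega_{\alpha}^{2n+2}$, which the paper leaves implicit since $\omega_{\alpha}=g_{\mathscr{U}}(\mathbbm{I}_{\alpha}\otimes{\rm{Id}})$ with $\mathbbm{I}_{\alpha}$ invertible is automatically nondegenerate). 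Where you genuinely diverge is item (3): the paper first identifies $\Upsilon=\omega_{2}+\sqrt{-1}\,\omega_{3}$ with $d\vartheta$, where $\vartheta=z_{i}\pi^{\ast}\theta_{i}$ is the tautological holomorphic $1$-form, deduces holomorphicity of $\Upsilon$ from that of $\vartheta$ with respect to $\mathbbm{I}_{1}$, and proves nondegeneracy by the explicit local computation $(d\vartheta)^{n+1}=(n+1)z_{i}^{n}dz_{i}\wedge\pi^{\ast}\big(\theta_{i}\wedge(d\theta_{i})^{n}\big)\neq 0$; you instead show algebraically, from the quaternionic relations and the Hermitian compatibility, that $\Upsilon$ is of type $(2,0)$ for $\mathbbm{I}_{1}$, conclude holomorphicity from $d\Upsilon=0$ together with integrability of $\mathbbm{I}_{1}$ (the only place normality of $(\Phi_{1},\xi_{1},\eta_{1})$ enters), and obtain $\Upsilon^{n+1}\neq 0$ from nondegeneracy of $\omega_{2}$ via $\omega_{2}^{2n+2}=\binom{2n+2}{n+1}2^{-(2n+2)}\,\Upsilon^{n+1}\wedge\overline{\Upsilon}^{\,n+1}$ (both checks are correct). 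Your route is more structural and coordinate-free for item (3), at the cost of the extra top-power computation for $\omega_{2}$; the paper's route has the advantage of producing the explicit identification $\Upsilon=d\vartheta=d(\pi^{\ast}\theta)$ as part of the proof rather than as a supplementary remark, which is what makes the naturality statement transparent --- since you recover this identity at the end, both arguments deliver the same final picture.
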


\begin{proof}
Consider $\mathscr{U}(Z) := {\text{Tot}}(L^{\times})$, such that $L^{-1} = E$ is the contact line bundle associated to $(Z, \mathscr{J},\theta)$. Fixed a Hermitian structure $\langle \cdot \ , \cdot \rangle_{L} \colon L \times L \to \mathbbm{C}$, let $Q(L)$ be the underlying sphere bundle of $(L, \langle \cdot \ , \cdot \rangle_{L})$. From this, we consider the identification $\mathscr{U}(Z) \cong Q(L) \times \mathbbm{R}$, such that

\begin{equation}
\label{identification}
u \in \mathscr{U}(Z) \mapsto \Bigg (\frac{u}{||u||} \ , \ t(u) \Bigg) \in Q(L) \times \mathbbm{R},
\end{equation}
where $||u|| = \sqrt{\langle u \ , u \rangle_{L}}$, and $t(u) = \log(||u||)$, $\forall u \in \mathscr{U}(Z)$. By using the almost contact metric 3-structure $(g_{Q}, \Phi_{\alpha},\xi_{\alpha},\eta_{\alpha})$, $\alpha = 1,2,3$, provided by Theorem \ref{Mainteorem} on $Q = Q(L)$, we can equip $\mathscr{C}(Q(L)) = Q(L) \times \mathbbm{R}$ with an almost hypercomplex structure $(\mathbbm{I}_{1},\mathbbm{I}_{2},\mathbbm{I}_{3})$, such that
\begin{center}
$ \displaystyle \mathbbm{I}_{\alpha}(X) : = \Phi_{\alpha}(X) - \eta_{\alpha}(X)\frac{d}{dt}$, \ \ \ \ $ \displaystyle \mathbbm{I}_{\alpha}\Big (\frac{d}{dt} \Big) := \xi_{\alpha}$, \ \ \ \ $(\alpha = 1,2,3)$
\end{center}
for all $X \in \mathfrak{X}(Q(L))$. Since $(\Phi_{1},\xi_{1},\eta_{1})$ is a normal almost contact structure (see Eq. \ref{Normalintegrable}), it follows that $[\mathbbm{I}_{1} , \mathbbm{I}_{1} ] = 0$. Moreover, under the diffeomorphism \ref{identification}, the complex structure $\mathbbm{I}_{1}$ can be identified with the natural complex structure underlying $\mathscr{U}(Z) = {\text{Tot}}(L^{\times})$. Now, from the compatible Riemannian metric $g_{Q}$, we can define a Riemannian metric $g_{\mathscr{C}}$ on $\mathscr{U}(Z)$, such that
\begin{center}
$g_{\mathscr{C}} := g_{Q} + dt\otimes dt$.
\end{center}
It is straightforward to check that $(g_{\mathscr{C}}, \mathbbm{I}_{1},\mathbbm{I}_{2},\mathbbm{I}_{3})$  defines an almost hyperhermitian structure on $\mathscr{U}(Z)$. By setting $g_{\mathscr{U}} := {\rm{e}}^{t}g_{\mathscr{C}}$, we have that $g_{\mathscr{U}}$ is compatible with $\mathbbm{I}_{\alpha}$, $\alpha = 1,2,3$. In particular, since $[\mathbbm{I}_{1}, \mathbbm{I}_{1} ] = 0$, we have that $(g_{\mathscr{U}},\mathbbm{I}_{1})$ defines a Hermitian structure on $\mathscr{U}(Z)$, so we obtain item (1). Now, since $(g_{Q},\Phi_{\alpha},\xi_{\alpha},\eta_{\alpha})$ is a contact metric structure, for $\alpha = 2,3$, we have
\begin{center}
$\omega_{\alpha} := g_{\mathscr{U}}(\mathbbm{I}_{\alpha} \otimes {\rm{Id}}) = {\rm{e}}^{t} \Theta_{\alpha} = d({\rm{e}}^{t} \eta_{\alpha})$, \ \ \ \ \ \ ($\alpha = 2,3$)
\end{center}
where $\Theta_{\alpha} = g_{\mathscr{C}}(\mathbbm{I}_{\alpha} \otimes {\rm{Id}})$, $\alpha = 2,3$, see Remark \ref{Hyperkahler3contact}. Thus, $d\omega_{2} = d\omega_{3} = 0$, so we obtain item (2). 

In order to prove item (3), firstly, we observe that
\begin{center}
$\eta_{2} = \frac{1}{2}(\vartheta + \overline{\vartheta})|_{Q(L)} \ \ \ {\text{and}} \ \ \ \eta_{3} = \frac{1}{2\sqrt{-1}}(\vartheta - \overline{\vartheta})|_{Q(L)},$
\end{center}
where $\vartheta \in \Omega^{1}_{\mathscr{U}(Z)}$ is the holomorphic 1-form locally described by $\vartheta =z_{i} \pi^{\ast}\theta_{i}$, on $\pi^{-1}(U_{i})$, see Eq. \ref{localholomorphcform}. By keeping the same notation as in Section \ref{Kobayashi'scontact}, we have ${\rm{e}}^{t} = (\sqrt{h_{i} \circ \pi})|z_{i}|$, on $\pi^{-1}(U_{i})$. Therefore, by considering polar coordinates $z_{i} = |z_{i}|{\rm{e}}^{\sqrt{-1}\phi_{i}}$, it follows that
\begin{equation}
\displaystyle \vartheta = \frac{{\rm{e}}^{t + \sqrt{-1}\phi_{i}}}{\sqrt{h_{i}\circ \pi}}\pi^{\ast}\theta_{i} = {\rm{e}}^{t} \big (\eta_{2} + \sqrt{-1}\eta_{3}\big),
\end{equation}
see Eq. \ref{localcontactKobayashi}, and Eq. \ref{imafinaryholomorphic}. Hence, since $\omega_{\alpha}= d({\rm{e}}^{t}\eta_{\alpha})$, $\alpha =2,3$, we obtain
\begin{equation}
\Upsilon = \omega_{2} + \sqrt{-1}\omega_{3} = d\vartheta.
\end{equation}
Now, since $\vartheta$ is holomorphic with respect to $\mathbbm{I}_{1}$, it follows that $\Upsilon$ is a holomorphic (2,\ 0)-form on $(\mathscr{U}(Z),\mathbbm{I}_{1})$. Further, since $\theta_{i} \wedge \big (d\theta_{i} \big)^{n} \neq 0$, on $U_{i}$, and 
\begin{center}
$(d\vartheta)^{n+1} = (n+1)z_{i}^{n}dz_{i} \wedge \pi^{\ast} \big (\theta_{i} \wedge (d\theta_{i})^{n}\big)$,
\end{center}
on $\pi^{-1}(U_{i})$, it follows that $\Upsilon^{n+1} \neq 0$ on $\mathscr{U}(Z)$. Therefore, we have that $\Upsilon = \omega_{2} + \sqrt{-1}\omega_{3}$ defines a holomorphic symplectic structure on $(\mathscr{U}(Z),\mathbbm{I}_{1})$.
\end{proof}

\end{document}